\documentclass[a4paper,11pt,reqno]{amsart}

\usepackage{amssymb}
\usepackage{latexsym}
\usepackage{amsmath}
\usepackage{mathrsfs}
\usepackage{euscript}
\usepackage{amsthm}
\usepackage{upgreek}
\usepackage{cite}
\usepackage{xcolor}
\usepackage{tikz}
\usepackage{calc}                         

\newcommand{\scal}[2]{\langle #1,#2\rangle}

\newcommand{\rr}[1]{\mathbf R^{#1}}
\newcommand{\zz}[1]{\mathbf Z^{#1}}

\newcommand{\nn}[1]{\mathbf N^{#1}}

\newcommand{\nm}[2]{\Vert #1\Vert _{#2}}
\newcommand{\NM}[2]{\left \Vert #1\right \Vert _{#2}}

\newcommand{\sets}[2]{\{ \, #1\, ;\, #2\, \} }

\newcommand{\ep}{\varepsilon}
\newcommand{\fy}{\varphi}

\newcommand{\cdo}{\, \cdot \, }

\newcommand{\supp}{\operatorname{supp}}

\newcommand{\eabs}[1]{\langle #1\rangle}

\newcommand{\vrum}{\vspace{0.1cm}}

\newcommand{\sinc}{\operatorname{sinc}}

\newcommand{\sfW}{\mathsf{W}}

\newcommand{\maclD}{\mathcal D}

\newcommand{\maclS}{\mathcal S}

\newcommand{\mascF}{\mathscr F}

\newcommand{\mascP}{\mathscr P}
\newcommand{\mascS}{\mathscr S}

\newcommand{\mabfj}{\boldsymbol j}
\newcommand{\mabfk}{\boldsymbol k}

\newcommand{\fkb}{\mathfrak b}


\setcounter{section}{\value{section}-1}   

\numberwithin{equation}{section}          
\newtheorem{thm}{Theorem}
\numberwithin{thm}{section}

\newcommand{\rubrik}{}
\newtheorem{prop}[thm]{Proposition}
\newtheorem{cor}[thm]{Corollary}
\newtheorem{lemma}[thm]{Lemma}

\theoremstyle{definition}

\newtheorem{defn}[thm]{Definition}

\theoremstyle{remark}

\newtheorem{rem}[thm]{Remark}              

%
%

\author{Joachim Toft}

\address{Department of Mathematics,
Linn{\ae}us University, V{\"a}xj{\"o}, Sweden}

\email{joachim.toft@lnu.se}

\title[Step multipliers on modulation spaces]
{Step multipliers, Fourier step multipliers and multiplications on
quasi-Banach modulation spaces}

\keywords{Hilbert transform, convolutions, multiplications}

\subjclass[2010]{primary 42B35, 44A15, 44A35, 46A16 
secondary 16W80} 

\begin{document}



\begin{abstract}
We prove the boundedness of a general class of multipliers and Fourier multipliers,
in particular of the Hilbert transform, on quasi-Banach modulation spaces.
We also deduce boundedness for multiplications and convolutions
for elements in such spaces.
\end{abstract}

\maketitle

\section{Introduction}\label{sec0}

\par

In the paper we deduce mapping properties of 
step multipliers and Fourier step multipliers
when acting on quasi-Banach modulation spaces.
Some parts of our investigations are based on
certain continuity properties for
multiplications and convolutions for elements in such spaces,
deduced in Section \ref{sec3}, and
which might be of independent interests.

\par

The Hilbert transform, i.{\,}e. multiplication by
the signum function on the Fourier transform side,  is frequently used in
mathematics, science and
technology. In physics it can be used to secure causality. For example,
in optics, the refractive index of a material is the frequency response of
a causal system whose real part gives the phase shift of the penetrating light
and the imaginary part gives the attenuation. The relationship between
the two are given by the Hilbert transform. Consequently, knowledge of one is
sufficient to retrieve the other.

\par

An inconveniently property with the Hilbert transform concerns lack of continuity
when acting on commonly used spaces. For example, 
it is well-known that the Hilbert transform is continuous on $L^2$, but fails to be
continuous on $L^p$ for $p\neq 2$ as well as on $\mascS$.
(See \cite{Ho1} and Section \ref{sec1} for notations.) 
A pioneering contribution which drastically improve the situation concerns
\cite{Oko1}, where K. Okoudjou
already in his thesis showed that the Hilbert transform is continuous
on the modulation space $M^{p,q}$ when $p\in (1,\infty)$ and $q\in [1,\infty ]$.
The result is surprising because $M^{p,q}$ is rather close to $L^p$ when
$q$ stays between $p$ and $p'$ (see e.{\,}g. \cite{Fei1,Toft2}).

\par

Okoudjou's result in \cite{Oko1} was extended in \cite{BenGraGroOko},
where B{\'e}nyi, Grafakos, Gr{\"o}chenig and Okoudjou
show that Fourier step multipliers, i.{\,}e. Fourier multipliers of the form
\begin{equation}\label{Eq:DefFourMultStepFunc}
f\mapsto \mascF ^{-1}
\left (\sum _{j\in b\mathbf Z}a_0(j)\chi _{j+[0,b)}\widehat f \right ),
\qquad a_0\in \ell ^\infty (b\mathbf Z),
\end{equation}
are continuous on the modulation space $M^{p,q}(\rr d)$, when
$p\in (1,\infty )$ and $q\in [1,\infty ]$. (See \cite[Theorem 1]{BenGraGroOko}.)

\par

Recall that
modulation spaces is a family of function and distribution spaces
introduced by Feichtinger in \cite{Fei1} and further developed
by Feichtinger and G{\"o}chenig in \cite{Fei6,FeiGro1,FeiGro2,FeiGro3,Gro2}.
In particular, the modulation spaces $M^{p,q}_{(\omega )}(\rr d)$ and
$W^{p,q}_{(\omega )}(\rr d)$ are
the set of tempered (or Gelfand-Shilov) distributions
whose short-time Fourier transforms belong to the weighted and mixed
Lebesgue spaces $L^{p,q}_{(\omega )}(\rr {2d})$ respectively
$L^{p,q}_{*,(\omega )}(\rr {2d})$.
Here $\omega$ is a weight function on phase (or time-frequency shift) space
and $p,q\in (0,\infty ]$. Note that $W^{p,q}_{(\omega )}(\rr d)$
is also an example on Wiener-amalgam spaces (cf. \cite{Fei6}).

\par

There are several convenient characterizations of modulation spaces. For
example, in \cite{Fei3,GaSa,Gro2,Gro2.5}, it
is shown that modulation spaces admit
reconstructible sequence space representations using Gabor frames.

\par

In Section \ref{sec2} we extend \cite[Theorem 1]{BenGraGroOko}
in several ways (see Theorems \ref{Thm:Mainthm1} and \ref{Thm:Mainthm2}).
\begin{enumerate}
\item The condition $q\in [1,\infty ]$ is relaxed into $q\in (0,\infty ]$.

\vrum

\item We allow weighted modulation spaces $M^{p,q}_{(\omega )}(\rr d)$,
where the weight $\omega$ only depends on the momentum or frequency
variable $\xi$, i.{\,}e. $\omega (x,\xi ) = \omega (\xi )$. These weights are
allowed to grow or decay at infinity, faster than polynomial growth.

\vrum

\item Our analysis also include continuity properties for the modulation
spaces $W^{p,q}_{(\omega )}(\rr d)$.
\end{enumerate}

\par

In similar ways as in \cite{BenGraGroOko}, we use Gabor analysis
for modulation spaces to show these properties. In \cite{BenGraGroOko},
the continuity for Fourier step multipliers are obtained by a convenient
choice of Gabor atoms in terms of Fourier transforms of second order
B-splines. This essentially transfer the critical continuity questions to
a finite set of discrete convolution operators acting on $\ell ^p$,
with dominating operator being the discrete Hilbert transform.
The choice of Gabor atoms then admit precise estimates of the
appeared convolution operators.

\par

In our situation the B-splines above are insufficient, because
B-splines lack in regularity, and when $p$ approaches $0$,
unbounded regularity
on the Fourier transform of the Gabor atoms are required. 
In fact, in order to obtain continuity for
weighted modulation spaces
with general moderate weights in the momentum variables,
it is required that the Fourier transform of Gabor atoms
obey even stronger regularities of Gevrey types.

\par

In Section \ref{sec4} we obtain some further extensions
and deduce precise estimates of the Fourier multipliers
in \eqref{Eq:DefFourMultStepFunc}, where more restrictive
$a_0$ should belong to $\ell ^{q}(b\mathbf Z)$ for some
$q\in (0,\infty ]$. In the end we are able to prove that
the Fourier multiplier in \eqref{Eq:DefFourMultStepFunc}
is continuous from $M^{p,q_1}$ to $M^{p,q_2}$ when
$p\in (1,\infty )$ and $q_1,q_2\in (0,\infty ]$ satisfy
$$
\frac 1{q_2}-\frac 1{q_1}\le \frac 1q.
$$

\par

More generally, in Section \ref{sec4} we generalize the continuity properties for
the step and Fourier step multiplier results in Section \ref{sec2} with
more general \emph{slope step} multiplier and Fourier slope step multipliers.
\\[3ex]
$\phantom k$
\noindent
\begin{tikzpicture}
\draw[->] (-0.25,0) -- (4.5,0) ;
\draw[->] (0,-0.25) -- (0,2.5) ;
\draw [smooth, samples=100,domain=0:0.76,variable=\x] plot(\x ,0.5 );
\draw [fill] (0,0.5) circle (1.1pt); 
\draw (0.8,0.5) circle (1.3pt); 
\draw [smooth, samples=100,domain=0.8:1.56,variable=\x] plot(\x ,{1});
\draw [fill] (0.8,1) circle (1.1pt); 
\draw (1.6,1) circle (1.3pt); 
\draw [smooth, samples=100,domain=1.6:2.36,variable=\x] plot(\x ,2.3 );
\draw [fill] (1.6,2.3) circle (1.1pt); 
\draw (2.4,2.3) circle (1.3pt); 
\draw [smooth, samples=100,domain=2.4:3.16,variable=\x] plot(\x ,0.8 );
\draw [fill] (2.4,0.8) circle (1.1pt); 
\draw (3.2,0.8) circle (1.3pt); 
\draw [smooth, samples=100,domain=3.2:3.96,variable=\x] plot(\x ,1.5 );
\draw [fill] (3.2,1.5) circle (1.1pt); 
\draw (4,1.5) circle (1.3pt); 
\draw[->] (6.25,0) -- (11,0) ;
\draw[->] (6.5,-0.25) -- (6.5,2.5) ;
\draw [smooth, samples=100,domain=6.5:7.28,variable=\x]
plot(\x ,{0.5+1.3*(\x -6.5)^2} );
\draw [fill] (6.5,0.5) circle (1.1pt); 
\draw (7.3,1.33) circle (1.3pt); 
\draw [smooth, samples=100,domain=7.3:8.09,variable=\x]
plot(\x ,{1+0.25*sin(16*\x r)});
\draw [fill] (7.3,0.867) circle (1.1pt); 
\draw       (8.1,0.822) circle (1.3pt); 
\draw [smooth, samples=100,domain=8.1:8.867,variable=\x] plot(\x ,2.3 );
\draw [fill] (8.1,2.3) circle (1.1pt); 
\draw (8.9,2.3) circle (1.3pt); 
\draw [smooth, samples=100,domain=8.9:9.665,variable=\x]
plot(\x ,{0.8+0.5*(9.7-\x )^2 } );
\draw [fill] (8.9,1.12) circle (1.1pt); 
\draw       (9.7,0.8) circle (1.3pt);
%
\draw [smooth, samples=100,domain=9.7:10.47,variable=\x]
plot(\x ,{\x -8.5} );
\draw [fill] (9.7,1.2) circle (1.1pt); 
\draw       (10.5,2)  circle (1.3pt);
\end{tikzpicture}

\par

Multiplier functions in Section \ref{sec2}.
\hspace{0.6cm}
Multiplier functions in Section \ref{sec4}.

\vspace{0.2cm}

An important ingredient for the proofs of the latter extension is multiplication
and convolution properties for $M^{p,q}_{(\omega )}$ and
$W^{p,q}_{(\omega )}$ spaces, given in Section \ref{sec3}.


\par

\begin{prop}\label{Prop:IntroMultConvMod}
Let $p_j,q_j\in (0,\infty ]$, $j=0,1,2$,
$$
\theta _1=\max \left (1 ,\frac 1{p_0},\frac 1{q_1},\frac 1{q_2} \right )
\quad \text{and}\quad
\theta _2=\max \left (1 ,\frac 1{p_1},\frac 1{p_2} \right ).
$$
Then
\begin{alignat*}{3}
M^{p_1,q_1}\cdot M^{p_2,q_2} &\subseteq M^{p_0,q_0},&
\qquad
\frac 1{p_1}+\frac 1{p_2} &=\frac 1{p_0}, &
\quad
\frac 1{q_1}+\frac 1{q_2} &= \theta _1 + \frac 1{q_0},
\\[1ex]
M^{p_1,q_1}* M^{p_2,q_2} &\subseteq M^{p_0,q_0}, &
\qquad
\frac 1{p_1}+\frac 1{p_2} &= \theta _2+\frac 1{p_0}, &
\quad
\frac 1{q_1}+\frac 1{q_2} &= \frac 1{q_0}.
\end{alignat*}
\end{prop}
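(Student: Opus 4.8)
\emph{Proof proposal.} The plan is to reduce both inclusions to mixed-norm Hölder and Young inequalities for sequence spaces, via the Gabor-frame representation of modulation spaces recalled above. Since the spaces here carry no weight, a Gaussian window $g$ is more than sufficient: there is a lattice $\Lambda=\alpha\zz{d}\times\beta\zz{d}$ and a dual window in $\mascS$ with $\nmm{f}_{M^{p,q}}\asymp\nmm{\{V_gf(\alpha j,\beta k)\}_{j,k\in\zz{d}}}_{\ell^{p,q}}$ for all $p,q\in(0,\infty]$. By density of $\mascS$ and a routine limiting argument it suffices to establish the two norm estimates for $f_1,f_2\in\mascS$, for which $f_1f_2$ and $f_1*f_2$ are unambiguously defined; the general case then serves to define these products.

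The mechanism is the pair of exact short-time Fourier transform identities. With the (admissible) window $\phi=g^2$,
\[
V_\phi(f_1f_2)(x,\xi)=c_d\,\bigl(V_gf_1(x,\cdot)*V_gf_2(x,\cdot)\bigr)(\xi),
\]
a convolution in the frequency variable and a pointwise product in the position variable; dually, with $\phi=g*g$ (again a Gaussian),
\[
\abp{V_\phi(f_1*f_2)(x,\xi)}=c_d\,\Abp{\bigl(V_gf_1(\cdot,\xi)*V_gf_2(\cdot,\xi)\bigr)(x)},
\]
a convolution in the position variable and a pointwise product in the frequency variable. Sampling on $\Lambda$ and using that the $\ell^{p_0,q_0}$-norm of the samples dominates $\nmm{f_1f_2}_{M^{p_0,q_0}}$, the multiplication estimate reduces to bounding $\nmm{d}_{\ell^{p_0,q_0}}$ for a sequence $d(j,k)$ which — after absorbing a bounded near-diagonal spread in the position index and a rapidly decaying off-diagonal kernel in the frequency index (the kernel lies in $\ell^\rho$ for every $\rho>0$, which is what legitimizes the estimates in the quasi-Banach range) — is dominated by $\sum_{k'}\abp{c_1(j,k')}\,\abp{c_2(j,k-k')}$, where $c_i$ denote the Gabor coefficients of $f_i$; the convolution estimate is the same with the roles of $j$ and $k$ interchanged.

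For multiplication one estimates in the position index $j$ by Hölder, which is lossless and yields $1/p_1+1/p_2=1/p_0$, and in the frequency index $k$ by a discrete Young inequality, which in the quasi-Banach range carries two losses. When $p_0<1$ Minkowski's integral inequality is unavailable, so the $\ell^{p_0}$-quasi-triangle inequality is used instead; this forces the frequency convolution to be run at the level of $\ell^{q_0/p_0}$ rather than $\ell^{q_0}$, and contributes the term $1/p_0$. The discrete Young inequality in the frequency index itself — governed for $q_1,q_2\le1$ by the embedding $\ell^{q_i}\hookrightarrow\ell^1$, with leftover slack absorbed through $M^{p,q}\hookrightarrow M^{p,q'}$ ($q\le q'$) — contributes the term $\max(1,1/q_1,1/q_2)$. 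Taken together these give exactly $\theta_1=\max(1,1/p_0,1/q_1,1/q_2)$. The convolution case is symmetric: a lossless Hölder in the frequency index gives $1/q_1+1/q_2=1/q_0$, while the discrete Young inequality in the position index gives $\max(1,1/p_1,1/p_2)=\theta_2$; here the convolution sits in the \emph{inner} (position) variable, so it is handled by a genuine Young inequality with no Minkowski obstruction, which is why no output exponent enters $\theta_2$. Alternatively, the convolution inclusion follows from the multiplication inclusion for the Wiener-amalgam spaces $W^{p,q}$ by conjugating with the Fourier transform, which intertwines $M^{p,q}$ with $W^{q,p}$ and turns convolution into multiplication.

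The main obstacle is precisely this quasi-Banach bookkeeping — threading a lossless Hölder through one index, a lossy Young through the other, the $\ell^{p_0}$-quasi-triangle substituting for Minkowski when $p_0<1$, and the absorption of leftover slack by modulation-space embeddings — so that the exponent relations that emerge are exactly those with $\theta_1$, respectively $\theta_2$, as defined; a secondary, more technical point is verifying that the near-diagonal position spread and the off-diagonal Gabor kernels are summable in $\ell^\rho$ for $\rho$ as small as required.
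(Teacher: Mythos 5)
Your route is essentially the one the paper takes (the STFT identities that turn $V_{\phi _0}(f_1f_2)$ into a frequency convolution of $V_gf_1(x,\cdo )$ and $V_gf_2(x,\cdo )$, and $V_{\phi _0}(f_1*f_2)$ into a position convolution; then discretization, a lossless H{\"o}lder estimate in the pointwise index and a quasi-Banach Young estimate in the convolution index, the two statements being exchanged by the Fourier transform), the only cosmetic difference being that the paper discretizes through the Wiener-amalgam majorants $\sfW (\omega ,\ell ^{p,q})$ together with Proposition \ref{Prop:ProjMapModCont}, rather than through Gaussian Gabor sampling. The genuine gap is your opening reduction: ``by density of $\mascS$ and a routine limiting argument it suffices to establish the two norm estimates for $f_1,f_2\in \mascS$; the general case then serves to define these products.'' The proposition allows every exponent in $(0,\infty ]$, and when both factors have some exponent equal to $\infty$ (e.{\,}g. $q_1=q_2=\infty$, or $p_1=p_2=\infty$), $\mascS$ is dense in neither factor, so the limiting argument neither defines $f_1f_2$ (resp. $f_1*f_2$) on the full spaces nor transfers the estimate to general elements. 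The paper goes out of its way to avoid exactly this: the product and convolution are \emph{defined} for general elements by the short-time Fourier transform formulas (Definition \ref{Def:MultConvSTFT}), and the estimates in Theorems \ref{Thm:MultMod1}--\ref{Thm:ConvMod2} are proved directly at that level -- which is possible because every inequality in the argument is between nonnegative majorants -- while uniqueness of the extension is treated separately (density when all exponents are finite, duality, and narrow convergence for products; it is left open for convolutions in the critical case $p_1+q_1=p_2+q_2=\infty$). Your own pointwise identities support the same fix, but as written the density step fails precisely at the endpoint exponents that the statement includes.

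A second, smaller point is the bookkeeping that produces $\theta _1$. As phrased, the quasi-triangle substitute for Minkowski ``contributes the term $1/p_0$'' and the discrete Young inequality ``contributes $\max (1,1/q_1,1/q_2)$'', and these are ``taken together''; read additively this gives the condition $1/q_1+1/q_2\ge 1/p_0+\max (1,1/q_1,1/q_2)+1/q_0$, which is strictly stronger than the asserted one whenever $p_0<1$. The losses combine as a maximum, not a sum: raise the pointwise bound to the power $r=\min (1,p_0)$, apply the discrete Young inequality of Proposition \ref{Prop:HolderYoungDiscrLebSpaces} (2) to the $r$-th powers with exponents $q_j/r$, and divide by $r$; the Young loss $\max (1,r/q_1,r/q_2)$ then rescales to the single quantity $\max (1/r,1/q_1,1/q_2)=\max (1,1/p_0,1/q_1,1/q_2)=\theta _1$, which is exactly how the functional $R_{p_0,2}(q_1,q_2)$ arises in the paper's proof. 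With that made explicit, and with the convolution case argued as you indicate (Young in the inner position variable, H{\"o}lder in the outer frequency variable, hence no output exponent in $\theta _2$), the rest of your outline matches the paper's argument.
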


\par

Similar result holds for $W^{p,q}$ spaces. The general multiplication
and convolution properties in Section \ref{sec3} also overlap with
results by Bastianoni, Cordero and Nicola in \cite{BaCoNi},
by Bastianoni and Teofanov in \cite{BaTe},
and by Guo, Chen, Fan and Zhao in \cite{GuChFaZh}.

\par

The multiplication
relation in Proposition \ref{Prop:IntroMultConvMod} for $p_j,q_j\ge 1$
was obtained already in \cite{Fei1} by Feichtinger. It is also obvious that
the convolution relation was well-known since then
(though a first formal proof of this relation seems to be given first in
\cite{Toft3}). In general, these convolution and multiplication
properties follow the rules
\begin{alignat*}{4}
\ell ^{p_1}&*\ell ^{p_2}\subseteq \ell ^{p_0}, & \quad 
\ell ^{q_1} &\cdot \ell ^{q_2}\subseteq \ell ^{q_0} &
\quad &\Rightarrow & \quad
M^{p_1,q_1}&*M^{p_2,q_2}\subseteq M^{p_0,q_0}
\intertext{and}
\ell ^{p_1} &\cdot \ell ^{p_2}\subseteq \ell ^{p_0}, & \quad 
\ell ^{q_1} &* \ell ^{q_2}\subseteq \ell ^{q_0} &
\quad &\Rightarrow & \quad
M^{p_1,q_1} &\cdot M^{p_2,q_2}\subseteq M^{p_0,q_0},
\end{alignat*}
which goes back to \cite{Fei1} in the Banach space case and to
\cite{GaSa} in the quasi-Banach case. See also \cite{FeiGro1}
and \cite{Rau2}
for extensions of these relations to more general Banach function
spaces and quasi-Banach function spaces, respectively.

\par

In Section \ref{sec3} we extend the multiplication and
convolution results in \cite{BaCoNi,BaTe,GuChFaZh} to allow
more general weights as well as finding multi-linear versions.
We stress that the results in Section \ref{sec3} hold true for general
moderate weights, while corresponding results in \cite{GuChFaZh}
are formulated only for polynomially moderate weights which also
should be split, i.{\,}e. of the form $\omega (x,\xi )=\omega _1(x)\omega _2(\xi)$.
In Section \ref{sec3} we also carry out questions on uniqueness for
extensions of multiplications and convolutions from
the Gelfand-Shilov space $\Sigma _1(\rr d)$,
to the involved modulation spaces. Note that $\Sigma _1(\rr d)$
is dense in $\mascS (\rr d)$ and is contained in all
modulation spaces with moderate weights (see e.{\,}g. \cite{Toft10}).
On the other hand, in contrast to \cite{GuChFaZh},
we do not deduce any sharpness for our results.

\par

%
%

%

\par

The analysis to show Proposition \ref{Prop:IntroMultConvMod}
is more complex compared to the restricted case when $p_j,q_j\ge 1$,
because of absence of local-convexity of involved spaces when
some of the Lebesgue exponents are smaller than one.
In fact, the desired estimates when $p_j,q_j\ge 1$
can be achieved by straight-forward applications of H{\"o}lder's and
Young's inequalities. For corresponding estimates
in Proposition \ref{Prop:IntroMultConvMod}$'$, some additional
arguments seems to be needed. In our situation we
discretize the situations in similar ways as in \cite{BaCoNi} by using
Gabor analysis for modulation spaces, and then apply some
further arguments, valid in non-convex analysis. This
approach is slightly different compared to what is used in
\cite{GuChFaZh} which follows the discretization technique
introduced in \cite{WaHu}, and which has some traces of Gabor analysis.

\par

A non-trivial question concerns wether the multiplications
and convolutions in Propositions \ref{Prop:IntroMultConvMod}
and \ref{Prop:IntroMultConvMod}$'$ are uniquely defined or not.
If $p_j,q_j<\infty$, $j=1,2$, then the uniqueness is evident because
the Schwartz space is dense in $M^{p_j,q_j}$. In the case
$p_1,q_1<\infty$ or $p_2,q_2<\infty$, the uniqueness in Proposition
\ref{Prop:IntroMultConvMod} follows from the first case,
duality and embedding properties for quasi-Banach modulation spaces
into Banach modulation spaces.
The uniqueness in \ref{Prop:IntroMultConvMod}$'$ then
follows from the uniqueness in Proposition \ref{Prop:IntroMultConvMod}
and the fact that $M^{p,q}$ increases with $p$ and $q$.

\par

A critical situation appear when $p_1+q_1=p_2+q_2=\infty$. Then
$\mascS$ is neither dense in $M^{p_1,q_1}$ nor in
$M^{p_2,q_2}$. For the multiplications in Propositions
\ref{Prop:IntroMultConvMod},
the uniqueness can be obtained by suitable approaches based on
the so-called narrow convergence, which is a weaker form of
convergence compared to norm convergence (see \cite{Sjo,Toft2,Toft10}).
However, for the
convolution in Propositions \ref{Prop:IntroMultConvMod}, we are not able
to show any uniqueness of these extensions in this critical situation.

\par

The paper is organized as follows. In Section \ref{sec1} we present well-known
properties of Gelfand-Shilov spaces, modulation spaces, multipliers and
Fourier multipliers. In Section \ref{sec2} we deduce continuity
properties for step and Fourier step multipliers when acting on (quasi-Banach)
modulation spaces. Then we establish convolution and continuity properties
for quasi-Banach modulation spaces in Section \ref{sec3}.
In Section \ref{sec4} we show how the multiplication and convolution results
in Section \ref{sec3} can be used to generalize the continuity results
in Section \ref{sec2}, to
more general slope step multiplier and Fourier slope step multipliers.
Finally we present a proof of a multi-linear convolution result in Appendix
\ref{AppA}. 

\par

\section*{Acknowledgement}

\par

The idea of the paper appeared when I supervised
Nils Zandler-Andersson for his bachelor degree (see \cite{Zan}). In those
thesis, Mr. Andersson deduced some extensions of
the multiplier results in \cite{BenGraGroOko} to certain quasi-Banach
modulation spaces (see \cite[Theorem 4.16]{Zan}). I am also
grateful to Elena Cordero and Nenad Teofanov for reading the
paper and giving valuable comments, leading to improvements of the
content.

\par

\section{Preliminaries}\label{sec1}

\par

In this section we recall some facts on Gelfand-Shilov spaces,
modulation spaces, discrete convolutions, step and Fourier step
multipliers. After explaining some properties of the Gelfand-Shilov
spaces and their distribution spaces, we consider a suitable
twisted convolution and recall some facts on weight functions and
mixed norm spaces.
Thereafter we consider classical modulation spaces,
which are more general compared Feichtinger in \cite{Fei1}
in the sense of more general weights as well as we permit the
Lebesgue exponents to belong to the full interval $(0,\infty ]$
instead of $[1,\infty ]$. Here we also recall some facts on
Gabor expansions for modulation spaces. Then we collect some
facts on discrete convolution estimates on weighted $\ell ^p$
spaces with the exponents in the full interval $(0,\infty ]$.
We finish the section by giving the definition of step and Fourier step
multipliers. 

\par

\subsection{Gelfand-Shilov spaces and their distribution spaces}

\par

For any $0<h,s,\sigma \in \mathbf R$, $\mathcal S_{s,h}^\sigma (\rr d)$
consists of all
$f\in C^\infty (\rr d)$ such that
\begin{equation}\label{gfseminorm}
\nm f{\mathcal S_{s,h}^\sigma}\equiv \sup \frac {|x^\beta \partial ^\alpha
f(x)|}{h^{|\alpha  + \beta |}\alpha !^\sigma \beta !^s}
\end{equation}
is finite. Then $\mathcal S_{s,h}^\sigma (\rr d)$ is a Banach space with norm
$\nm \cdo{\mathcal S_{s,h}^\sigma}$.
The \emph{Gelfand-Shilov spaces} $\mathcal S_s^\sigma (\rr d)$ 
and $\Sigma _s^\sigma (\rr d)$, of Roumieu and Beurling 
types respectively, are the inductive and projective limits of
$\mathcal S_{s,h}^\sigma (\rr d)$ with respect to $h>0$ (see e.{\,}g.
\cite{GeSh}). It follows that
\begin{equation}\label{GSspacecond1}
\mathcal S_s^\sigma (\rr d) = \bigcup _{h>0}\mathcal S_{s,h}^\sigma (\rr d)
\quad \text{and}\quad \Sigma _s^\sigma (\rr d) =\bigcap _{h>0}
\mathcal S_{s,h}^\sigma (\rr d)
\end{equation}
We remark that 
$\Sigma _s^\sigma (\rr d)\neq \{ 0\}$, if and only if $s+\sigma > 1$, and
$\maclS _s^\sigma (\rr d)\neq \{ 0\}$, if and only
if $s+\sigma \ge 1$, and that
$$
\maclS _{s_1}^{\sigma _1}(\rr d)\subseteq \Sigma _{s_2}^{\sigma _2}(\rr d)
\subseteq \maclS _{s_2}^{\sigma _2}(\rr d)
\subseteq \mascS (\rr d),
\qquad s_1<s_2 ,\ \sigma _1<\sigma _2.
$$

\par

The \emph{Gelfand-Shilov distribution spaces} $(\mathcal S_s^\sigma )'(\rr d)$ 
and $(\Sigma _s^\sigma )'(\rr d)$, of Roumieu and Beurling 
types respectively, are the (strong) duals of $\mathcal S_s^\sigma (\rr d)$ 
and $\Sigma _s^\sigma (\rr d)$, respectively. It follows that if
$(\mathcal S_{s,h}^\sigma )'(\rr d)$ is the $L^2$-dual of
$\mathcal S_{s,h}^\sigma (\rr d)$ and $s+\sigma \ge 1$
($s+\sigma > 1$),
then $(\mathcal S_s^\sigma )'(\rr d)$
($(\Sigma _s^\sigma )'(\rr d)$) can be identified
with the projective limit (inductive limit) of
$(\mathcal S_{s,h}^\sigma )'(\rr d)$ with respect to $h>0$. It follows that
\begin{equation}\label{GSspacecond2}
(\mathcal S_s^\sigma )'(\rr d) = \bigcap _{h>0}(\mathcal S_{s,h}^\sigma )'(\rr d)
\quad \text{and}\quad \Sigma _s'(\rr d) =\bigcup _{h>0}
(\mathcal S_{s,h}^\sigma )'(\rr d)
\end{equation}
for such choices of $s$ and $\sigma$. (See \cite{Pil}.) We remark that 
\begin{align*}
\mascS '(\rr d) &\subseteq (\maclS _{s_2}^{\sigma _2})'(\rr d)\subseteq
(\Sigma _{s_2}^{\sigma _2})'(\rr d)\subseteq (\maclS _{s_1}^{\sigma _1})'(\rr d),
\intertext{when}
s_1 &< s_2 ,\ \sigma _1<\sigma _2\quad \text{and}\quad s_1+\sigma _1\ge 1.
\end{align*}
For convenience we set $\maclS _s=\maclS _s^s$ and
$\Sigma _s=\Sigma _s^s$.

\par

The Gelfand-Shilov spaces are invariant under several basic transformations.
For example they are invariant under translations, dilations
and under (partial) Fourier transformations.
%
%
In fact, let $\mathscr F$ be the Fourier transform which takes the form
$$
(\mathscr Ff)(\xi )= \widehat f(\xi ) \equiv (2\pi )^{-\frac d2}\int _{\rr
{d}} f(x)e^{-i\scal  x\xi }\, dx
$$
when $f\in L^1(\rr d)$. Here $\scal \cdo \cdo$ denotes the usual scalar product
on $\rr d$. The map $\mathscr F$ extends 
uniquely to homeomorphisms on $\mathscr S'(\rr d)$, from
$(\mathcal S_s^\sigma )'(\rr d)$ to $(\mathcal S_\sigma ^s )'(\rr d)$
and from $(\Sigma _s^\sigma )'(\rr d)$ to $(\Sigma _\sigma ^s )'(\rr d)$.
Then the map $\mathscr F$ restricts
to homeomorphisms on $\mathscr S(\rr d)$, from
$\mathcal S_s^\sigma (\rr d)$ to $\mathcal S_\sigma ^s (\rr d)$,
from $\Sigma _s^\sigma (\rr d)$ to $\Sigma _\sigma ^s (\rr d)$,
and to a unitary operator on $L^2(\rr d)$.

\par

There are several characterizations of Gelfand-Shilov spaces and their 
distribution spaces (cf. \cite{ChuChuKim,Eij,Toft18} and the references therein).
For example, it follows from \cite{ChuChuKim,Eij} that the following is true.
Here $g(\theta )\lesssim h(\theta )$,
$\theta \in \Omega$, means that there is a constant $c>0$ such that
$g(\theta )\le ch(\theta )$ for all $\theta \in \Omega$.

\par

\begin{prop}\label{Prop:GSexpcond}
Let $f\in \mascS '(\rr d)$ and $s,\sigma >0$.
Then the following conditions are equivalent:
\begin{enumerate}
\item $f\in \maclS _s^\sigma (\rr d)$ ($f\in \Sigma _s^\sigma (\rr d)$);

\vrum

\item $|f(x)|\lesssim e^{-r|x|^{\frac 1s}}$ and $|\widehat f (\xi )|
\lesssim e^{-r |\xi |^{\frac 1\sigma}}$ for some $r>0$ (for every $r>0$);

\vrum

\item $f\in C^\infty (\rr d)$ and $|(\partial ^\alpha f)(x)|
\lesssim h^{|\alpha |}\alpha !^\sigma e^{-r|x|^{\frac 1s}}$
for some $r,h>0$ (for every $r,h>0$).
\end{enumerate}
\end{prop}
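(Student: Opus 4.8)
The plan is to establish the equivalence of the three conditions in Proposition \ref{Prop:GSexpcond} by a cyclic chain of implications, $(1)\Rightarrow(2)\Rightarrow(3)\Rightarrow(1)$, treating the Roumieu case (with ``for some $r,h>0$'') and the Beurling case (with ``for every $r,h>0$'') in parallel, since the arguments differ only in the order of quantifiers. Throughout, the key elementary fact I would use repeatedly is the two-sided comparison between factorial growth and exponential-of-power growth: for a sequence of constants $C_N = h^N N!^\sigma$, one has $\sup_N C_N t^N \asymp e^{r t^{1/\sigma}}$ for a suitable $r$ depending on $h,\sigma$ (and conversely), which is the standard Gevrey-type estimate obtained by optimizing over $N$ via Stirling's formula. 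I would state this as a preliminary lemma or simply invoke it inline, since it is routine.

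First, $(1)\Rightarrow(2)$. If $f\in\maclS_s^\sigma(\rr d)$, then from the seminorm bound \eqref{gfseminorm} with $\alpha=0$ we get $|x^\beta f(x)|\lesssim h^{|\beta|}\beta!^s$ for all multi-indices $\beta$; summing a suitable power series (or taking the supremum over $\beta$ in one variable at a time) and using the factorial-versus-exponential comparison yields $|f(x)|\lesssim e^{-r|x|^{1/s}}$. Since the Fourier transform maps $\maclS_s^\sigma(\rr d)$ into $\maclS_\sigma^s(\rr d)$ (as recalled just before the proposition), applying the same argument to $\widehat f$ gives $|\widehat f(\xi)|\lesssim e^{-r|\xi|^{1/\sigma}}$. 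In the Beurling case one instead starts from the fact that the seminorm is finite for every $h>0$ and concludes the decay holds for every $r>0$.

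Next, $(2)\Rightarrow(3)$. This is the analytic heart of the argument: one must upgrade pointwise decay of $f$ and $\widehat f$ into derivative estimates of Gevrey type. The standard route is to write $(\partial^\alpha f)(x) = (2\pi)^{-d/2}\int (i\xi)^\alpha \widehat f(\xi)\, e^{i\scal x\xi}\, d\xi$ and split the integral at $|\xi|\sim c|x|^{?}$, but a cleaner approach is to use the analytic/Gevrey extension: condition (2) forces $f$ to extend holomorphically (for $s\le 1$) or at least to be Gevrey-regular, and one estimates $\partial^\alpha f$ by a Cauchy-type integral or by the inverse Fourier transform, controlling $|\xi^\alpha \widehat f(\xi)|$ through $\sup_\xi |\xi^\alpha| e^{-r|\xi|^{1/\sigma}}\lesssim h^{|\alpha|}\alpha!^\sigma$, again by the factorial-exponential comparison. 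Combining this with the decay $|f(x)|\lesssim e^{-r'|x|^{1/s}}$ — and being careful that the decay survives differentiation, which follows because a function decaying like $e^{-r|x|^{1/s}}$ together with all derivatives bounded has derivatives decaying at a slightly smaller rate, or by an interpolation/convolution trick — gives the mixed bound in (3). I expect \textbf{this implication to be the main obstacle}, precisely because one has to juggle the decay in $x$ and the derivative count $\alpha$ simultaneously; the honest fix is to cite \cite{ChuChuKim,Eij} for the delicate part, as the excerpt already signals (``it follows from \cite{ChuChuKim,Eij}''), and only sketch the passage.

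Finally, $(3)\Rightarrow(1)$. Given $f\in C^\infty(\rr d)$ with $|(\partial^\alpha f)(x)|\lesssim h^{|\alpha|}\alpha!^\sigma e^{-r|x|^{1/s}}$, one recovers the seminorm \eqref{gfseminorm} by estimating $|x^\beta \partial^\alpha f(x)| \lesssim h^{|\alpha|}\alpha!^\sigma \cdot \sup_x |x^\beta| e^{-r|x|^{1/s}}$, and the supremum is $\lesssim R^{|\beta|}\beta!^s$ for a suitable $R$ (one more application of the factorial-exponential comparison, optimizing $|x|^{|\beta|}e^{-r|x|^{1/s}}$ over $|x|$). Hence $\nm f{\maclS_{s,h'}^\sigma}<\infty$ for $h'=\max(h,R)$, giving $f\in\maclS_s^\sigma(\rr d)$; in the Beurling case the quantifier ``for every $r,h$'' propagates to ``for every $h'$.'' This closes the cycle and completes the proof.
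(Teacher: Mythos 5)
The first thing to note is that the paper does not prove Proposition \ref{Prop:GSexpcond} at all: it is imported as a known characterization, with the justification consisting solely of the citation to \cite{ChuChuKim,Eij}. So there is no internal argument to compare yours against, and your decision to defer the delicate step to those same references is, in effect, exactly what the paper does. Judged on its own terms, your cyclic scheme is the right one, and the two outer implications are correct and complete as sketched: $(1)\Rightarrow(2)$ is the standard optimization $\sup _\beta |x^\beta |/(h^{|\beta |}\beta !^s)\asymp e^{r|x|^{1/s}}$ applied to $f$ and, via $\mascF \colon \maclS _s^\sigma (\rr d)\to \maclS _\sigma ^s(\rr d)$, to $\widehat f$; and $(3)\Rightarrow(1)$ is the same factorial-versus-exponential comparison run backwards, with the quantifiers propagating correctly in both the Roumieu and Beurling regimes.

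The genuine gap is where you locate it, in $(2)\Rightarrow(3)$, and as written your sketch does not close it. Fourier inversion applied to the decay of $\widehat f$ only yields the \emph{uniform} Gevrey bound $\sup _x|\partial ^\alpha f(x)|\lesssim h^{|\alpha |}\alpha !^\sigma$, with no decay in $x$; and the phrase ``the decay survives differentiation'' is precisely the assertion that has to be proved, not something that follows from boundedness of the derivatives alone. The standard self-contained fix is a Landau--Kolmogorov interpolation localized to unit balls: comparing $\sup _{B(x,1)}|\partial ^\alpha f|$ with the geometric mean of $\sup _{B(x,2)}|f|$ and the uniform bound on derivatives of order $2\alpha$ (with interpolation constants growing at most geometrically in $|\alpha |$), one obtains $|\partial ^\alpha f(x)|\lesssim (Ch)^{|\alpha |}\,(2\alpha )!^{\sigma /2}\, e^{-r'|x|^{1/s}}$, and then $(2\alpha )!^{\sigma /2}\le 2^{\sigma |\alpha |}\alpha !^{\sigma}$ together with $\sup _{|y-x|\le 2}e^{-r|y|^{1/s}}\lesssim e^{-cr|x|^{1/s}}$ gives exactly (3), with a halved decay rate and an enlarged $h$ --- harmless in the Roumieu case, and harmless in the Beurling case because the hypotheses there hold for every $r,h>0$. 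Either insert such an interpolation argument, or keep the citation to \cite{ChuChuKim,Eij} as the paper does; but the intermediate claims in your sketch (holomorphic extension, ``interpolation/convolution trick'') should not be left as if they were routine, since they carry the entire content of the implication.
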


\par

Gelfand-Shilov spaces and their distribution spaces can also be
characterized by estimates on their short-time Fourier transforms
Let $\phi \in \maclS _s(\rr d)$ ($\phi \in \Sigma _s(\rr d)$) be fixed.
Then the short-time Fourier transform of $f\in \maclS _s'(\rr d)$
(of $f\in \Sigma _s'(\rr d)$) with respect to $\phi$ is defined by
\begin{align}
(V_\phi f)(x,\xi ) &\equiv (2\pi )^{-\frac d2}(f,\phi (\cdo -x)e^{i\scal \cdo \xi})_{L^2}.
\label{Eq:STFTDef}
\intertext{We observe that}
(V_\phi f)(x,\xi ) &= \mascF (f\cdot \overline {\phi (\cdo -x)})(\xi )
\tag*{(\ref{Eq:STFTDef})$'$}
\intertext{(cf. \cite{Toft22}). If in addition $f\in L^p(\rr d)$ for some $p\in [1,\infty ]$, then}
(V_\phi f)(x,\xi ) &= (2\pi )^{-\frac d2}\int _{\rr d}f(y)\overline {\phi (y-x)}e^{-i\scal y\xi}\, dy.
\tag*{(\ref{Eq:STFTDef})$''$}
\end{align}

\par

In the next lemma we present characterizations of
Gelfand-Shilov spaces and their distribution spaces
in terms of estimates on the short-time Fourier transforms of the
involved elements. The proof is omitted, since
the first part follows from \cite{GroZim}, and the second part from
\cite{Toft10,Toft18}.

\par

\begin{lemma}\label{Lemma:GSFourierest}
Let $p\in [1,\infty ]$, $f\in \mathcal S'_{1/2}(\rr d)$, $s,\sigma >0$,
$\phi \in \maclS _s^\sigma (\rr d)\setminus 0$
($\phi \in \Sigma _s^\sigma (\rr d)\setminus 0$)
and
$$
v_r(x,\xi ) = e^{r(|x|^{\frac 1s}+|\xi |^{\frac 1\sigma})},\qquad r\ge 0.
$$
Then the following is true:
\begin{enumerate}
\item[(1)] $f\in \mathcal S_s^\sigma (\rr d)$ ($f\in \Sigma _s^\sigma (\rr d)$),
if and only if
\begin{equation}\label{Eq:GSExpCond1}
\nm {V_\phi f\cdot v_r}{L^p}< \infty
\end{equation}
for some $r>0$ (for every $r>0$);

\vrum

\item[(2)] $f\in (\mathcal S_s^\sigma )'(\rr d)$
($f\in (\Sigma _s^\sigma )'(\rr d)$), if and only if
\begin{equation}\label{Eq:GSExpCond2}
\nm {V_\phi f/ v_r}{L^p}< \infty
\end{equation}
for every $r>0$ (for some $r>0$).
\end{enumerate}
\end{lemma}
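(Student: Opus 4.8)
\textbf{Proof proposal for Lemma \ref{Lemma:GSFourierest}.}

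The plan is to reduce everything to the pointwise estimates on $f$ and $\widehat f$ furnished by Proposition \ref{Prop:GSexpcond}, together with the standard machinery of the short-time Fourier transform. First I would record two elementary facts. For part (1), since $\phi$ decays like $e^{-r_0|\cdot|^{1/s}}$ and its Fourier transform decays like $e^{-r_0|\cdot|^{1/\sigma}}$ for a suitable $r_0>0$ (by Proposition \ref{Prop:GSexpcond}(2), applied in the Roumieu case for some $r_0>0$, in the Beurling case for every $r_0>0$), and similarly for $f$ when $f\in\maclS_s^\sigma$, one obtains from the identity (\ref{Eq:STFTDef})$''$ — using that $f\in\maclS_s^\sigma\subseteq L^1$ — the bound
\begin{equation*}
|(V_\phi f)(x,\xi)|\le (2\pi)^{-d/2}\int_{\rr d}|f(y)|\,|\phi(y-x)|\,dy
\lesssim \int_{\rr d}e^{-r_1|y|^{1/s}}e^{-r_1|y-x|^{1/s}}\,dy
\lesssim e^{-r_2|x|^{1/s}},
\end{equation*}
using the subadditivity-type inequality $|y|^{1/s}+|y-x|^{1/s}\gtrsim |x|^{1/s}$ (valid with a constant depending on $s$, coming from $|a+b|^t\le C(|a|^t+|b|^t)$). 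For the frequency decay of $V_\phi f$ I would instead use the covariance relation $V_\phi f(x,\xi)=e^{-i\scal x\xi}\,\overline{V_{\widehat\phi}\widehat f(\xi,-x)}$ (up to the usual normalizing constant, cf. \cite{Gro2}) and run the same argument with $\widehat f,\widehat\phi$ in place of $f,\phi$, giving $|V_\phi f(x,\xi)|\lesssim e^{-r_3|\xi|^{1/\sigma}}$; combining the two decay estimates yields $|V_\phi f(x,\xi)|\lesssim e^{-r_4(|x|^{1/s}+|\xi|^{1/\sigma})}$, whence \eqref{Eq:GSExpCond1} for some $r>0$ in the Roumieu case (every $r>0$ in the Beurling case), for any $p\in[1,\infty]$.

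For the converse direction of (1), I would use the inversion formula for the short-time Fourier transform, writing $f$ as a superposition $f=(2\pi)^{-d/2}\nm{\phi}{L^2}^{-2}\iint V_\phi f(x,\xi)\,e^{i\scal\cdot\xi}\phi(\cdot-x)\,dx\,d\xi$, and differentiate under the integral sign. Hitting this with $y^\beta\partial^\alpha_y$ produces factors that are polynomially controlled in $x,\xi$ and the Gelfand-Shilov seminorms of $\phi$; the assumed exponential bound \eqref{Eq:GSExpCond1} (for the $p$ in question — one first passes to $p=\infty$, or to $p=1$, by Hölder against a suitable exponential weight, since $v_{r'}/v_r\in L^{p'}$ for $r'<r$) absorbs the polynomial factors and leaves a convergent integral, giving $|y^\beta\partial^\alpha f(y)|\lesssim h^{|\alpha+\beta|}\alpha!^\sigma\beta!^s$, i.e.\ $f\in\maclS_s^\sigma$ (resp.\ $\Sigma_s^\sigma$). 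Part (2) is handled dually: the condition \eqref{Eq:GSExpCond2} says $V_\phi f\in L^p_{(1/v_r)}$, and one tests $f$ against $\psi\in\maclS_s^\sigma$ (resp.\ $\Sigma_s^\sigma$) by Parseval's/Moyal's identity for the STFT, $\scal f\psi=c\iint V_\phi f(x,\xi)\,\overline{V_\phi\psi(x,\xi)}\,dx\,d\xi$, and bounds the right-hand side by $\nm{V_\phi f/v_r}{L^p}\nm{V_\phi\psi\cdot v_r}{L^{p'}}$, the latter being finite by part (1); the reverse implication in (2) follows by a similar superposition/inversion argument applied in the distribution sense.

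The main obstacle — and the reason the proof is "omitted" in the paper, being classical — is the independence of the condition from $p$ and from the particular window $\phi$: one must check that if \eqref{Eq:GSExpCond1} (or \eqref{Eq:GSExpCond2}) holds for one admissible $\phi$ and one $p$, it holds for all of them. The $p$-independence is the point where the exponential weight does the work that would fail for polynomial weights: for any $1\le p\le\infty$ and any $r>r'>0$ one has $v_{r'}\le v_r\cdot(v_r/v_{r'})^{-1}$ with $(v_r/v_{r'})^{-1}\in L^{p}\cap L^{\infty}$, so Hölder upgrades an $L^p$ bound with weight $v_r$ to an $L^\infty$ bound with weight $v_{r'}$ and vice versa (losing an arbitrarily small amount in $r$, which is harmless for both the Roumieu "for some $r$" and the Beurling "for every $r$" quantifiers). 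The window-independence follows from the pointwise convolution estimate $|V_{\phi_1}f(x,\xi)|\lesssim (|V_{\phi_2}f|\ast|V_{\phi_1}\phi_2|)(x,\xi)$ (up to constants, cf.\ \cite{Gro2,Toft10}) together with the fact that $V_{\phi_1}\phi_2$ itself decays like $e^{-r(|x|^{1/s}+|\xi|^{1/\sigma})}$ by the already-proven part (1) applied to $\phi_2\in\maclS_s^\sigma$; convolving two such exponential weights reproduces an exponential weight of the same type (with a slightly smaller constant), closing the argument. Since this is all standard and referenced to \cite{GroZim,Toft10,Toft18}, in the paper it is natural to state the lemma and point to those sources rather than reproduce the computation.
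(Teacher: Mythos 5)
The paper does not actually prove this lemma: it is stated with the proof omitted, part (1) being referred to \cite{GroZim} and part (2) to \cite{Toft10,Toft18}. Your sketch is essentially the standard argument from those sources (decay of $V_\phi f$ from the decay of $f$ and $\widehat f$ via (\ref{Eq:STFTDef})$''$ and the covariance formula --- the complex conjugate you insert there is spurious, but harmless since only moduli are used; the inversion formula with the bookkeeping $\sup_{t>0}t^k e^{-rt^{1/\sigma}}\le C^k k!^{\sigma}$ for the converse of (1); Moyal's identity, H{\"o}lder and part (1) for the implication from \eqref{Eq:GSExpCond2} to $f\in (\maclS _s^\sigma )'(\rr d)$, while the opposite implication in (2) is more naturally done by estimating $(f,\phi (\cdo -x)e^{i\scal \cdo \xi})$ directly through the continuity of $f$ and the Gelfand--Shilov seminorms of the time--frequency shifts of $\phi$, rather than by an inversion argument).

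The one step that fails as written is your $p$-independence claim that ``H{\"o}lder upgrades an $L^p$ bound with weight $v_r$ to an $L^\infty$ bound with weight $v_{r'}$''. H{\"o}lder can only lower the Lebesgue exponent (paying with integrability of $v_{r'}/v_r$); for a general measurable $F$ with $Fv_r\in L^p$ one cannot conclude $Fv_{r'}\in L^\infty$ (tall thin spikes). What saves the argument is that $V_\phi f$ is not a general function: by Remark \ref{Rem:BasicPropTwistConv} it satisfies $V_\phi f=\nm \phi {L^2}^{-2}(V_\phi \phi )*_V(V_\phi f)$, hence $|V_\phi f|\lesssim |V_\phi f|*|V_\phi \phi |$, and since $|V_\phi \phi |\lesssim e^{-r_0(|x|^{\frac 1s}+|\xi |^{\frac 1\sigma})}$ (part (1) applied to $\phi$ itself), splitting $v_{r'}(X)\lesssim v_{r''}(Y)v_{r''}(X-Y)$ and applying H{\"o}lder to the convolution integral does yield $\nm {V_\phi f\, v_{r'}}{L^\infty}\lesssim \nm {V_\phi f\, v_r}{L^p}$ for suitable $r'<r$. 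This is exactly the convolution estimate you already invoke for window-independence, so the repair costs nothing; alternatively, for the converse direction of (1) your reduction to $p=1$ (a legitimate use of H{\"o}lder, since $v_{r'}/v_r\in L^{p'}$ when $r'<r$) already suffices, and no $L^\infty$ bound is needed there. With that correction the sketch is a faithful reconstruction of the proofs the paper delegates to the literature.
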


%

\par

We also need the following. Here the first part is a straight-forward consequence
of the definitions, and the second part follows from the first part and duality.
The details are left for the reader.

\par

\begin{prop}\label{Prop:AdjointSTFTCont}
Let $\phi \in \Sigma _s(\rr d)\setminus 0$. Then the following is true:
\begin{enumerate}
\item $V_\phi$ is continuous from $\Sigma _s(\rr d)$ to
$\Sigma _s(\rr {2d})$ and from $\Sigma _s'(\rr d)$ to $\Sigma _s'(\rr {2d})$;

\vrum

\item $V_\phi ^*$ is continuous from $\Sigma _s(\rr {2d})$ to
$\Sigma _s(\rr d)$ and from $\Sigma _s'(\rr {2d})$ to $\Sigma _s'(\rr d)$.
\end{enumerate}

\par

The same holds true with $\maclS _s$ or $\mascS$ in place of $\Sigma _s$
at each occurrence.
\end{prop}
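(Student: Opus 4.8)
The plan is to prove Proposition~\ref{Prop:AdjointSTFTCont} directly from the analytic description of the short-time Fourier transform, using the fact that $V_\phi$ intertwines naturally with translations and modulations. First I would recall the basic identity
$$
V_\phi f(x,\xi ) = (2\pi )^{-d/2}\, \mascF\!\bigl( f\cdot \overline{\phi(\cdo -x)}\bigr)(\xi ),
$$
which is \eqref{Eq:STFTDef}$'$, together with the standard formula that exhibits $V_\phi f$ itself as (a constant times) a short-time Fourier transform of $f$ in the phase-space variables when tested against a product window $\Phi = \phi\otimes\overline{\widehat\phi}$-type atom; equivalently, one may use that $V_\phi$ maps the time-frequency shift $\pi(y,\eta)f$ to $e^{-i\langle\,\cdot\,\rangle}$ times a translate of $V_\phi f$. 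Since $\phi\in\Sigma_s(\rr d)$, the tensor structure of $\mascF$ on $\rr{2d}=\rr d\times\rr d$ and the Fourier-invariance of $\Sigma_s$ (recorded in the Fourier-transform discussion preceding Proposition~\ref{Prop:GSexpcond}) show that $V_\phi f$ inherits the Gelfand--Shilov decay and derivative bounds in both $x$ and $\xi$. Concretely, I would verify the short-time Fourier transform characterization of Lemma~\ref{Lemma:GSFourierest}: choosing a window $\Psi\in\Sigma_s(\rr{2d})$ one computes $V_\Psi(V_\phi f)$ and bounds $\nm{V_\Psi(V_\phi f)\cdot v_r}{L^\infty}$ for every $r>0$ by $\nm{V_\phi f\cdot v_{r'}}{L^\infty}$ for a suitable $r'$, which is finite precisely because $f\in\Sigma_s(\rr d)$. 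This yields part~(1) for $\Sigma_s$, and the continuity (i.e.\ boundedness of the linear map between Fr\'echet spaces) follows by tracking the constants in the seminorm estimates through the closed graph theorem, or more directly since every step is an explicit inequality between seminorms.

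For the distribution-space statement in part~(1), namely continuity of $V_\phi$ from $\Sigma_s'(\rr d)$ to $\Sigma_s'(\rr{2d})$, I would argue by duality: for $f\in\Sigma_s'(\rr d)$ and a test element $F\in\Sigma_s(\rr{2d})$, the pairing $\langle V_\phi f, F\rangle$ equals $\langle f, V_\phi^* F\rangle$, and part~(2) applied to the nonvanishing window (or rather its relation to $\phi$) shows $V_\phi^* F\in\Sigma_s(\rr d)$ with the appropriate seminorm control; hence $V_\phi f$ defines a continuous functional on $\Sigma_s(\rr{2d})$, with continuous dependence on $f$. Alternatively one can use Lemma~\ref{Lemma:GSFourierest}(2) together with the same composition formula $V_\Psi(V_\phi f)$, now estimating $\nm{V_\Psi(V_\phi f)/v_r}{L^\infty}$ by $\nm{V_\phi f/v_{r'}}{L^\infty}$, which is finite for some $r'>0$ by the characterization of $\Sigma_s'$.

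For part~(2), the continuity of $V_\phi^*$ from $\Sigma_s(\rr{2d})$ to $\Sigma_s(\rr d)$ and from $\Sigma_s'(\rr{2d})$ to $\Sigma_s'(\rr d)$, I would dualize part~(1): $V_\phi^*$ is (up to the pairing conventions) the formal adjoint of $V_\phi$, so the mapping properties transpose. One small point to be careful about is that $V_\phi^*$ is defined on all of $\Sigma_s'(\rr{2d})$, not just on functions; here one uses that $V_\phi$ maps $\Sigma_s(\rr d)$ continuously into $\Sigma_s(\rr{2d})$ (the first half of part~(1)), so its transpose is automatically continuous $\Sigma_s'(\rr{2d})\to\Sigma_s'(\rr d)$, and restricting to the test spaces gives continuity $\Sigma_s(\rr{2d})\to\Sigma_s(\rr d)$ by the reflexivity of these spaces together with part~(1) again. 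Finally, the identical argument goes through verbatim with $\maclS_s$ (Roumieu type, replacing ``for every $r>0$'' by ``for some $r>0$'' throughout) and with $\mascS$ (where the relevant characterization is the classical Schwartz one), since none of the steps uses anything beyond Fourier-invariance, the tensor structure of phase space, and the short-time Fourier transform composition identity. The main obstacle, and the only place requiring genuine care, is the bookkeeping of the weight parameter $r\mapsto r'$ and the quantifier (``some'' vs.\ ``every'') in the composition estimate $V_\Psi(V_\phi f)$; once that is handled cleanly, every other step is a routine transposition or an application of Lemma~\ref{Lemma:GSFourierest}, which is why the paper elects to omit the details.
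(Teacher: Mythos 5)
Your proposal is correct and follows essentially the route the paper itself indicates (it gives no detailed proof, stating only that part (1) is a straightforward consequence of the definitions and part (2) follows from part (1) by duality): direct seminorm/STFT estimates for the test-function statements, transposition and reflexivity for the rest, with Lemma \ref{Lemma:GSFourierest} supplying the quantifier bookkeeping. One small adjustment: prove the distribution half of (1) by your ``alternative'' argument (estimating $V_\Psi (V_\phi f)$ against $v_{r'}$ via Lemma \ref{Lemma:GSFourierest}(2)) rather than by invoking part (2), so that the dependency order --- test half of (1), then distribution half of (1), then both halves of (2) by transposition and reflexivity --- is non-circular.
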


\par

\subsection{A suitable twisted convolution}\label{subsec1.5?}

\par

Let $f$ be a distribution on $\rr d$, $\phi ,\phi _j$, $j=1,2,3$, be suitable
test functions on $\rr d$, and let $F$ and $G$ be a pair of suitable
distribution/test function on $\rr {2d}$. Then the
\emph{twisted convolution} $F*_VG$ of $F$ and $G$ is defined by
\begin{align}
(F*_VG)(x,\xi )
&=
(2\pi )^{-\frac d2}
\iint _{\rr {2d}} \! \!
F(x-y,\xi -\eta )G(y,\eta )e^{-i\scal{y}{\xi -\eta}}\, dyd\eta .
\notag
\\[1ex]
&=
(2\pi )^{-\frac d2}
\iint _{\rr {2d}} \! \!
F(y,\eta )G(x-y,\xi - \eta )e^{-i\scal{x-y}\eta}\, dyd\eta .
\label{Eq:TwistConvDef}
\end{align}
The convolution above should be interpreted as
\begin{align}
(F*_VG)(X) &= (2\pi )^{-\frac d2}\scal {F(X-\cdo )e^{-i\Phi (X,\cdo )}}{G}
\notag
\\[1ex]
 &= (2\pi )^{-\frac d2}\scal {F}{G(X-\cdo )e^{-i\Phi (X,X-\cdo )}},
\tag*{(\ref{Eq:TwistConvDef})$'$}
\\[1ex]
\text{where}\quad
\Phi (X,Y) &= \scal y{\xi -\eta} ,
\quad
X=(x,\xi )\in \rr {2d},\ Y=(y,\eta )\in \rr {2d},
\notag
\end{align}
when $F$ belongs to a distribution space on $\rr {2d}$ and $G$ belongs
to the corresponding test function space. By straight-forward computations
it follows that 
\begin{equation}\label{Eq:TwistedConvAsoc}
(F*_VG)*_VH = F*_V(G*_VH),
\end{equation}
when $F$, $H$ are distributions and $G$ is a test function, or
$F$, $H$ are test functions and $G$ is a distribution.

\par

\begin{rem}\label{Rem:BasicPropTwistConv}
Let $s>0$. An important property of $*_V$ above is that if
$f\in \Sigma _s'(\rr d)$ and $\phi _j\in \Sigma _s(\rr d)$
and $\phi \in \Sigma _s(\rr d)\setminus 0$, $j=1,2,3$, then
it follows by straight-forward applications of Parseval's formula that
\begin{align}
\big ( (V_{\phi _2}\phi _3) *_V(V_{\phi _1}f)  \big ) (x,\xi )
&=
(\phi _3,\phi _1)_{L^2} \cdot (V_{\phi _2}f)(x,\xi ).
\label{Eq:STFTWindTrans}
\intertext{and that if}
P_\phi &\equiv \nm \phi {L^2}^{-2}\cdot V_\phi \circ V_\phi ^*,
\label{Eq:ProjphiDef}
\intertext{then}
P_\phi F &= \nm \phi {L^2}^{-2}\cdot V_\phi \phi *_V F 
\intertext{when $F\in \Sigma _s'(\rr {2d})$. We observe that}
P_\phi ^* &= P_\phi
\quad \text{and}\quad P_\phi ^2=P_\phi .
\label{Eq:ProjphiRule}
\end{align}

(See e.{\,}g. Chapters 11 and 12 in \cite{Gro2}.)

\par

We also remark that if $F\in \Sigma _s'(\rr {2d})$, then $F=V_{\phi}f$ for some
$f\in \Sigma _s'(\rr {d})$, if and only if
\begin{equation}\label{Eq:TwistedProj}
F= P_\phi F.
\end{equation}
Furthermore, if \eqref{Eq:TwistedProj} holds, then $F=V_{\phi}f$ with
\begin{equation}\label{Eq:TwistedProj2}
f=\nm \phi{L^2}^{-2}V_\phi ^*F .
\end{equation}

\par

In fact, suppose that $f\in \Sigma _s'(\rr d)$ and let $F=V_\phi f$. Then
\eqref{Eq:TwistedProj} follows from \eqref{Eq:STFTWindTrans}.

\par

On the other hand, suppose that \eqref{Eq:TwistedProj} holds and let $f$
be given by \eqref{Eq:TwistedProj2}. Then
$$
V_\phi f = P_\phi F= F,
$$
and the asserted equivalence follows.

\par

We notice that the same holds true with $\maclS _s$ or $\mascS$ in place of $\Sigma _s$
at each occurrence.
\end{rem}

\par

\subsection{Mixed norm space of Lebesgue types}

\par

A weight on $\rr d$ is a function $\omega _0\in L^\infty _{loc}(\rr d)$ such that
$1/\omega _0\in L^\infty _{loc}(\rr d)$. The weight $\omega _0$ on $\rr d$ is called
\emph{moderate}, if there is an other weight $v$ on $\rr d$ such that
\begin{equation}\label{Eq:ModCond}
\omega (x+y)\lesssim \omega (x)v(y),\qquad x,y\in \rr d.
\end{equation}
The set of moderate weights on $\rr d$ is denoted by $\mascP _E(\rr d)$, and
if $s>0$, then $\mascP _{E,s}(\rr d)$ is the set of all moderate weights $\omega _0$
on $\rr d$ such that \eqref{Eq:ModCond} holds for $v(y)=e^{r|y|^{\frac 1s}}$
for some $r>0$. We also let $\mascP _{E,s}^\sigma (\rr {2d})$ be the set of all weights
$\omega$ such that
$$
\omega (x+y,\xi +\eta )\lesssim \omega (x,y)e^{r(|y|^{\frac 1s}+|\eta |^{\frac 1\sigma})}
$$
for some $r>0$. We recall that if $\omega \in \mascP _E(\rr d)$, then
there is a constant $r\ge 0$ such that
$$
\omega (x+y)\lesssim \omega (x)e^{r|y|},\qquad x,y\in \rr d.
$$
In particular, $\mascP _{E,s}(\rr d)=\mascP _E(\rr d)$ when $s\le 1$
(see \cite{Gro3}).

\par

For any weight $\omega$ on $\rr {2d}$ and for every $p,q\in (0,\infty ]$,
we set
\begin{alignat*}{3}
\nm {F}{L^{p,q}_{(\omega )}(\rr {2d})} &\equiv \nm {G_{F,\omega ,p}}{L^q(\rr d)}, &
\quad &\text{where} & \quad
G_{F,\omega ,p}(\xi ) &= \nm {F(\cdo ,\xi )\omega (\cdo ,\xi )}{L^p(\rr d)}
\intertext{and}
\nm {F}{L^{p,q}_{*,(\omega )}(\rr {2d})} &\equiv \nm {H_{F,\omega ,q}}{L^p(\rr d)}, &
\quad &\text{where} & \quad
H_{F,\omega ,q}(x) &= \nm {F(x,\cdo )\omega (x,\cdo )}{L^q(\rr d)},
\end{alignat*}
when $F$ is (complex-valued) measurable function on $\rr {2d}$. Then
$L^{p,q}_{(\omega )}(\rr {2d})$ ($L^{p,q}_{*,(\omega )}(\rr {2d})$)
consists of all measurable functions
$F$ such that $\nm F{L^{p,q}_{(\omega )}}<\infty$
($\nm F{L^{p,q}_{*,(\omega )}}<\infty$).

\par

In similar ways, let $\Omega _1,\Omega _2$ be discrete sets and
$\ell _0'(\Omega _1\times \Omega _2)$ consists of all formal (complex-valued)
sequences $c=\{ c(j,k)\} _{j\in \Omega _1,k\in \Omega _2}$. Then
the discrete Lebesgue spaces
$$
\ell ^{p,q}_{(\omega )}(\Omega _1\times \Omega _2)
\quad \text{and}\quad
\ell ^{p,q}_{*,(\omega )}(\Omega _1\times \Omega _2)
$$
of mixed (quasi-)norm types consists of all $c\in \ell _0'(\Omega _1\times \Omega _2)$
such that $\nm {c}{\ell^{p,q}_{(\omega )}(\Omega _1\times \Omega _2)}<\infty$
respectively $\nm {c}{\ell^{p,q}_{*,(\omega )}(\Omega _1\times \Omega _2)}<\infty$.
Here
\begin{alignat*}{3}
\nm {c}{\ell ^{p,q}_{(\omega )}(\Omega _1\times \Omega _2)}
&\equiv \nm {G_{F,\omega ,p}}{\ell ^q(\Omega _2)}, &
\quad &\text{where} & \quad
G_{c,\omega ,p}(k) &= \nm {F(\cdo ,k )\omega (\cdo ,k )}{\ell ^p(\Omega _1)}
\intertext{and}
\nm {c}{L^{p,q}_{*,(\omega )}(\Omega _1\times \Omega _2)}
&\equiv \nm {H_{c,\omega ,q}}{\ell ^p(\Omega _1)}, &
\quad &\text{where} & \quad
H_{c,\omega ,q}(j) &= \nm {c(j,\cdo )\omega (j,\cdo )}{\ell ^q(\Omega _2)},
\end{alignat*}
when $c\in \ell _0'(\Omega _1\times \Omega _2)$.

\par

\subsection{Modulation spaces and other Wiener type spaces}

%
%

The (classical) modulation spaces, essentially introduced in
\cite{Fei1} by Feichtinger are given given in the following.
(See e.{\,}g. \cite{Fei6} for definition of more general modulation spaces.)

\par

\begin{defn}
Let $p,q\in (0,\infty ]$, $\omega \in \mascP _E(\rr {2d})$ and
$\phi \in \Sigma _1(\rr d)\setminus 0$.
\begin{enumerate}
\item The \emph{modulation space} $M^{p,q}_{(\omega )}(\rr d)$
consists of all $f\in \Sigma _1'(\rr d)$ such that
$$
\nm f{M^{p,q}_{(\omega )}}\equiv \nm {V_\phi f}{L^{p,q}_{(\omega )}}
$$
is finite. The topology of $M^{p,q}_{(\omega )}(\rr d)$ is defined by
the (quasi-)norm $\nm \cdo{M^{p,q}_{(\omega )}}$;

\vrum

\item The \emph{modulation space (of Wiener amalgam type)}
$W^{p,q}_{(\omega )}(\rr d)$ consists of all $f\in \Sigma _1'(\rr d)$ such that
$$
\nm f{W^{p,q}_{(\omega )}}\equiv \nm {V_\phi f}{L^{p,q}_{*,(\omega )}}
$$
is finite. The topology of $W^{p,q}_{(\omega )}(\rr d)$ is defined by
the (quasi-)norm $\nm \cdo{W^{p,q}_{(\omega )}}$.
\end{enumerate}
\end{defn}

\par

\begin{rem}\label{Rem:ModSpaces}
Modulation spaces possess several convenient properties.
In fact, let $p,q\in (0,\infty ]$, $\omega \in \mascP _E(\rr {2d})$ and
$\phi \in \Sigma _1(\rr d)\setminus 0$. Then the following is true
(see \cite{Fei1,Fei6,FeiGro1,FeiGro2,GaSa,Gro2} and
their analyses for verifications):
\begin{itemize}
\item the definitions of $M^{p,q}_{(\omega )}(\rr d)$ and $W^{p,q}_{(\omega )}(\rr d)$
are independent of the choices of $\phi \in \Sigma _1(\rr d)\setminus 0$,
and different choices give rise to equivalent quasi-norms;

\vrum

\item the spaces $M^{p,q}_{(\omega )}(\rr d)$ and $W^{p,q}_{(\omega )}(\rr d)$
are quasi-Banach spaces which increase with $p$ and $q$, and decrease with
$\omega$. If in addition $p,q\ge 1$, then they are Banach spaces.

\vrum

\item $\Sigma _1(\rr d)\subseteq M^{p,q}_{(\omega )}(\rr d),W^{p,q}_{(\omega )}(\rr d)
\subseteq \Sigma _1'(\rr d)$;

\vrum

\item If in addition $p,q\ge 1$, then the $L^2(\rr d)$ scalar product,
$(\cdo ,\cdo )_{L^2(\rr d)}$, on $\Sigma _1(\rr d)\times \Sigma (\rr d)$ is
uniquely extendable to dualities between $M^{p,q}_{(\omega)}(\rr d)$
and $M^{p',q'}_{(1/\omega)}(\rr d)$, and between $W^{p,q}_{(\omega)}(\rr d)$
and $W^{p',q'}_{(1/\omega)}(\rr d)$. If in addition $p,q<\infty$, then
the duals of $M^{p,q}_{(\omega)}(\rr d)$ and $W^{p,q}_{(\omega)}(\rr d)$
can be identified with $M^{p',q'}_{(1/\omega)}(\rr d)$ respectively
$W^{p',q'}_{(1/\omega)}(\rr d)$, through the form $(\cdo ,\cdo )_{L^2(\rr d)}$;

\vrum

\item Let $\omega _0(x,\xi )= \omega (-\xi ,x)$. Then
$\mascF$ on $\Sigma _1'(\rr d)$
restricts to a homeomorphism from $M^{p,q}_{(\omega )}(\rr d)$ to
$W^{q,p}_{(\omega _0)}(\rr d)$.
\end{itemize}
\end{rem}

\par

\subsection{Gabor expansions for modulation spaces}

\par

A fundamental property for modulation spaces is that 
they can be discretized in convenient ways by Gabor
expansions. For fundamental contributions, see e.{\,}g.
\cite{ChrKimKim,Fei3,FeiGro1,FeiGro2,FeiGro3,GaSa,Gro1,Gro2,GroZim}
and the references therein. Here we present a straight
way to obtain such expansions in the case when we may find
compactly supported Gabor atoms.

\par

Let $s,\sigma >0$ be such that $s+\sigma \ge 1$.
Then $\maclD ^\sigma (\rr d)$ is the set of
all compactly supported elements in
$\maclS _s^\sigma (\rr d)$. That is, $\maclD ^\sigma (\rr d)$
consists of all $\phi \in C_0^\infty (\rr d)$ such that
$$
\nm {\partial ^\alpha \phi}{L^\infty} \lesssim h^{|\alpha |}\alpha !^\sigma 
$$
holds true for some $h>0$. We recall that if $\sigma \le 1$, then
$\maclD ^\sigma (\rr d)$ is trivial (i.{\,}e. $\maclD ^\sigma (\rr d)=\{ 0\}$).
If instead $\sigma >1$, then $\maclD ^\sigma (\rr d)$ is dense in
$C_0^\infty (\rr d)$.

\par

From now on we suppose that $\sigma >1$, giving that
$\maclD ^\sigma (\rr d)$ is non-trivial. In view of Sections 1.3 and 1.4 in
\cite{Ho1}, we may find $\phi ,\psi \in \maclD ^\sigma (\rr d)$ with values in
$[0,1]$ such that
\begin{alignat}{4}
\supp \phi &\subseteq \Big [-\frac 34,\frac 34\Big ] ^d,&
\quad
\phi (x) &= 1&
\quad &\text{when}& \quad
x&\in \Big [-\frac 14,\frac 14\Big ]^d
\label{Eq:phiProp}
\\[1ex]
\supp \psi &\subseteq [-1,1]^d,&
\quad
\psi (x) &= 1&
\quad &\text{when}& \quad
x&\in \Big [-\frac 34,\frac 34\Big ]^d
\label{Eq:psiProp}
\end{alignat}
and
\begin{equation}\label{Eq:PartUnity1}
\sum _{j\in \zz d}\phi (\cdo -j) =1.
\end{equation}

\par

Let $f\in (\maclS _s^\sigma )'(\rr d)$. Then
$x\mapsto f(x)\phi (x-j)$ belongs to $(\maclS _s^\sigma )'(\rr d)$ and is supported
in $j+[-\frac 34,\frac 34]^d$. Hence, by periodization it follows from Fourier analysis
that
\begin{equation}\label{Eq:FirstExp}
f(x)\phi (x-j) = \sum _{\iota \in \pi \zz d}c(j,\iota )e^{i\scal x\iota},
\qquad
x\in j+[-1,1]^d,
\end{equation}
where
$$
c(j,\iota ) = 2^{-d}(f,\phi (\cdo -j)e^{i\scal \cdo \iota})
=
\left ( \frac \pi 2\right )^{\frac d2} V_\phi f(j,\iota ),
\qquad j\in \zz d,\ \iota \in \pi \zz d .
$$
Since $\psi =1$ on the support of $\phi$, \eqref{Eq:FirstExp} gives
\begin{equation}\tag*{(\ref{Eq:FirstExp})$'$}
f(x)\phi (x-j) = \left ( \frac \pi 2\right )^{\frac d2}
\sum _{\iota \in \pi \zz d}V_\phi f(j,\iota )\psi (x-j)e^{i\scal x\iota},
\qquad
x\in \rr d,
\end{equation}
By \eqref{Eq:PartUnity1} it now follows that
\begin{align}
f(x) &= \left ( \frac \pi 2\right )^{\frac d2}
\sum _{(j,\iota )\in \Lambda}V_\phi f(j,\iota )\psi (x-j)e^{i\scal x\iota},
\qquad
x\in \rr d,
\label{Eq:SpecGaborExp1}
\intertext{where}
\Lambda &= \zz d\times (\pi \zz d),
\label{Eq:OurLattice}
\end{align}
which is the \emph{Gabor expansion} of $f$ with respect to the
\emph{Gabor pair} $(\phi ,\psi )$ and lattice $\Lambda$,
i.{\,}e. with respect to the \emph{Gabor atom}
$\phi$ and the \emph{dual Gabor atom} $\psi$.
Here the series converges in
$(\maclS _s^\sigma )'(\rr d)$. By duality and the fact that
$\maclD ^\sigma (\rr d)$ is dense in $(\maclS _s^\sigma )'(\rr d)$
we also have
\begin{equation}\label{Eq:SpecGaborExp2}
f(x) = \left ( \frac \pi 2\right )^{\frac d2}
\sum _{(j,\iota )\in \Lambda}V_\psi f(j,\iota )\phi (x-j)e^{i\scal x\iota},
\qquad
x\in \rr d,
\end{equation}
with convergence in $(\maclS _s^\sigma )'(\rr d)$.

\par

Let $T$ be a linear continuous operator from $\maclS _s^\sigma (\rr d)$ to
$(\maclS _s^\sigma )'(\rr d)$ and let $f\in \maclS _s^\sigma (\rr d)$. Then
it follows from \eqref{Eq:SpecGaborExp1} that
$$
(Tf)(x) = \left ( \frac \pi 2\right )^{\frac d2}
\sum _{(j,\iota )\in \Lambda}V_\phi f(j,\iota )T(\psi (\cdo -j)e^{i\scal \cdo \iota})(x)
$$
and
$$
T(\psi (\cdo -j)e^{i\scal \cdo \iota})(x)
=
\left ( \frac \pi 2\right )^{\frac d2}
\sum _{(k,\kappa )\in \Lambda}(V_\phi (T(\psi (\cdo -j)e^{i\scal \cdo \iota})))(k,\kappa )
\psi (x-k)e^{i\scal x\kappa}.
$$
A combination of these expansions show that
\begin{equation}\label{Eq:SpecGaborExp3}
(Tf)(x) = \left ( \frac \pi 2\right )^{\frac d2}
\sum _{(j,\iota )\in \Lambda} (A\cdot V_\phi f)(j,\iota )\psi (x-j)e^{i\scal x\iota},
\end{equation}
where $A=(a(\mabfj ,K))_{\mabfj ,\mabfk \in \Lambda}$ is the
$\Lambda \times \Lambda$-matrix, given by
\begin{multline}\label{Eq:OpGaborMatrix}
a(\mabfj ,\mabfk ) = \left ( \frac \pi 2\right )^{\frac d2}
(T(\psi (\cdo -j)e^{i\scal \cdo \iota}) , \phi (\cdo -k)e^{i\scal \cdo \kappa} )_{L^2(\rr d)}
\\[1ex]
\text{when} \quad \mabfj =(j,\iota ) \ \text{and} \ \mabfk =(k,\kappa ).
\end{multline}

\par

By the Gabor analysis for modulation spaces we get the following. We
refer to \cite{Fei3,FeiGro1,FeiGro2,FeiGro3,GaSa,Gro1,Gro2,Toft13} for details.

\par

\begin{prop}\label{Prop:GaborExpMod}
Let $\sigma >1$, $s\ge 1$, $p,q\in (0,\infty ]$, $\omega \in \mascP _{E,s}^\sigma
(\rr {2d})$,
$\phi ,\psi \in \maclD ^\sigma (\rr d;[0,1])$ be such that
\eqref{Eq:phiProp}, \eqref{Eq:psiProp} and \eqref{Eq:PartUnity1}
hold true, and let $f\in (\maclD ^\sigma )'(\rr d)$. Then the following is true:
\begin{enumerate}
\item $f\in M^{p,q}_{(\omega )}(\rr d)$, if and only if
$\nm {V_\phi f}{\ell _{(\omega )}^{p,q}(\zz d\times \pi \zz d)}$;

\vrum

\item $f\in M^{p,q}_{(\omega )}(\rr d)$, if and only if
$\nm {V_\psi f}{\ell _{(\omega )}^{p,q}(\zz d\times \pi \zz d)}$;

\vrum

\item the quasi-norms
$$
f\mapsto \nm {V_\phi f}{\ell _{(\omega )}^{p,q}(\zz d\times \pi \zz d)}
\quad \text{and}\quad
f\mapsto \nm {V_\psi f}{\ell _{(\omega )}^{p,q}(\zz d\times \pi \zz d)}
$$
are equivalent to $\nm \cdo{M^{p,q}_{(\omega )}}$.
\end{enumerate}
The same holds true with $W^{p,q}_{(\omega )}$
and $\ell _{*,(\omega )}^{p,q}$ in place of
$M^{p,q}_{(\omega )}$
respectively $\ell _{(\omega )}^{p,q}$ at each occurrence.
\end{prop}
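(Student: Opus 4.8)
The plan is to deduce the statement from the general Gabor-frame machinery, exploiting that \eqref{Eq:SpecGaborExp1} and \eqref{Eq:SpecGaborExp2} are \emph{exact} reconstruction formulas for the compactly supported Gevrey pair $(\phi,\psi)$. Introduce the analysis (sampling) maps $C_\phi f=\{V_\phi f(j,\iota)\}_{(j,\iota)\in\Lambda}$ and $C_\psi f$, and the synthesis maps $D_\psi c=\sum_{(j,\iota)\in\Lambda}c(j,\iota)\,\psi(\cdot-j)e^{i\langle\cdot,\iota\rangle}$ and $D_\phi c$, where $\Lambda=\zz d\times(\pi\zz d)$. There are three things to prove: (i) $C_\phi,C_\psi$ are bounded from $M^{p,q}_{(\omega)}(\rr d)$ into $\ell^{p,q}_{(\omega)}(\Lambda)$; (ii) $D_\phi,D_\psi$ are bounded from $\ell^{p,q}_{(\omega)}(\Lambda)$ into $M^{p,q}_{(\omega)}(\rr d)$; (iii) $f=(2/\pi)^{d/2}D_\psi(C_\phi f)=(2/\pi)^{d/2}D_\phi(C_\psi f)$, which is exactly \eqref{Eq:SpecGaborExp1}--\eqref{Eq:SpecGaborExp2}, and which remains valid for every $f\in(\maclD^\sigma)'(\rr d)$ (not just for $f\in(\maclS_s^\sigma)'(\rr d)$), since its proof only uses that $f\cdot\phi(\cdot-j)$ is a compactly supported element of $(\maclD^\sigma)'(\rr d)$, so the Fourier-series expansion \eqref{Eq:FirstExp} applies unchanged. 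Granting (i)--(iii), the equivalences follow: if $f\in M^{p,q}_{(\omega)}$ then $C_\phi f\in\ell^{p,q}_{(\omega)}$ with $\nm{C_\phi f}{\ell^{p,q}_{(\omega)}}\lesssim\nm f{M^{p,q}_{(\omega)}}$ by (i); conversely, if $f\in(\maclD^\sigma)'$ has $C_\phi f\in\ell^{p,q}_{(\omega)}$, then $f=(2/\pi)^{d/2}D_\psi(C_\phi f)\in M^{p,q}_{(\omega)}$ by (iii) and (ii), with $\nm f{M^{p,q}_{(\omega)}}\lesssim\nm{C_\phi f}{\ell^{p,q}_{(\omega)}}$; combining the two inequalities gives $\nm f{M^{p,q}_{(\omega)}}\asymp\nm{V_\phi f}{\ell^{p,q}_{(\omega)}}$, i.e. (1) and half of (3). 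Interchanging $\phi\leftrightarrow\psi$ and using \eqref{Eq:SpecGaborExp2} yields (2) and the other half of (3), and replacing $\ell^{p,q}_{(\omega)}$ and $L^{p,q}_{(\omega)}$ by $\ell^{p,q}_{*,(\omega)}$ and $L^{p,q}_{*,(\omega)}$ throughout reproduces the statements for $W^{p,q}_{(\omega)}$.

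For the boundedness of $C_\phi$, I would first note that, taking all windows equal to $\phi$ in \eqref{Eq:STFTWindTrans}, the reproducing identity of Remark~\ref{Rem:BasicPropTwistConv} gives $V_\phi f=\nm{\phi}{L^2}^{-2}\,(V_\phi\phi)*_V(V_\phi f)$, so that, discarding the unimodular factor in the twisted convolution, $|V_\phi f|\le\nm{\phi}{L^2}^{-2}\,|V_\phi\phi|*|V_\phi f|$ pointwise on $\rr{2d}$. Since $\phi\in\maclD^\sigma(\rr d)$ is compactly supported, $V_\phi\phi(x,\xi)=\mascF\big(\phi\,\overline{\phi(\cdot-x)}\big)(\xi)$ vanishes for $|x|$ large, and by the Gevrey-$\sigma$ regularity of $\phi$ together with Proposition~\ref{Prop:GSexpcond}/Lemma~\ref{Lemma:GSFourierest} it satisfies $|V_\phi\phi(x,\xi)|\lesssim e^{-c_0|\xi|^{1/\sigma}}$ for some $c_0>0$. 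Multiplying by $\omega$ and using the moderateness \eqref{Eq:ModCond} of $\omega$ with moderating weight $v=v_r$, one is reduced to a convolution estimate whose kernel is $H:=v_r\cdot|V_\phi\phi|$, which is supported in a compact set in the first $\rr d$-variable and decays like $e^{-(c_0-r)|\xi|^{1/\sigma}}$ in the second; for the atoms chosen so that $c_0>r$, the sequence of local suprema of $H$ over the cells of $\Lambda$ lies in $\ell^{\min(1,p),\min(1,q)}$, and the quasi-Banach Young inequality for semi-discrete mixed-norm convolutions then yields $\nm{C_\phi f}{\ell^{p,q}_{(\omega)}}\lesssim\nm{V_\phi f}{L^{p,q}_{(\omega)}}\asymp\nm f{M^{p,q}_{(\omega)}}$.

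For the boundedness of $D_\psi$, compute $V_\phi(D_\psi c)$ by the time-frequency shift rule for the short-time Fourier transform; this gives $|V_\phi(D_\psi c)(x,\xi)|\le\sum_{(j,\iota)\in\Lambda}|c(j,\iota)|\,|V_\phi\psi(x-j,\xi-\iota)|$, and multiplying by $\omega$ and invoking \eqref{Eq:ModCond} again dominates $\omega\cdot|V_\phi(D_\psi c)|$ by the semi-discrete convolution of $\omega\cdot|c|$ with the kernel $v_r\cdot|V_\phi\psi|$, which has the same good amalgam properties as $H$. Hence $\nm{D_\psi c}{M^{p,q}_{(\omega)}}\lesssim\nm c{\ell^{p,q}_{(\omega)}}$, the series converging in $M^{p,q}_{(\omega)}$ when $p,q<\infty$ and $\ast$-weakly in $(\maclD^\sigma)'(\rr d)$ otherwise; the estimates for $C_\psi$, $D_\phi$ and for the $W$-spaces are identical with the obvious changes of letters.

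The main obstacle is the quasi-Banach range $\min(p,q)<1$, where Minkowski's inequality and the usual Young inequalities are unavailable, so one must systematically use their $\ell^{\min(1,p)}$- and $\ell^{\min(1,q)}$-versions for semi-discrete mixed-norm convolutions and verify that the relevant kernels --- $v_r\cdot|V_\phi\phi|$, $v_r\cdot|V_\phi\psi|$, and their $\psi$-analogues --- do lie in the corresponding Wiener-amalgam spaces. This verification is precisely where the hypotheses $\sigma>1$ and $\omega\in\mascP^{\sigma}_{E,s}(\rr{2d})$ are used: the compact support of the atoms kills the first-variable part, and their Gevrey-$\sigma$ regularity makes the second-variable part decay like $e^{-c_0|\xi|^{1/\sigma}}$, which must (and, for an appropriate choice of atoms, does) beat the growth $e^{r|\xi|^{1/\sigma}}$ of the moderating weight $v_r$. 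The secondary, routine point --- that \eqref{Eq:SpecGaborExp1}--\eqref{Eq:SpecGaborExp2} persist on the larger space $(\maclD^\sigma)'(\rr d)$ --- should be dispatched at the outset, as indicated above.
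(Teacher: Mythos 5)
Your route --- analysis and synthesis maps on $\Lambda =\zz d\times (\pi \zz d)$, the exact reconstruction formulas \eqref{Eq:SpecGaborExp1}--\eqref{Eq:SpecGaborExp2}, and semi-discrete Young/amalgam estimates in the quasi-Banach range --- is exactly the Gabor-frame machinery the paper itself relies on: the paper offers no self-contained proof of Proposition \ref{Prop:GaborExpMod}, only the derivation of the expansions preceding it together with references (\cite{GaSa,Gro2,FeiGro1,Toft13}), and your sketch reconstructs what those references do. Your preliminary point, that \eqref{Eq:FirstExp}--\eqref{Eq:SpecGaborExp2} persist for $f\in (\maclD ^\sigma )'(\rr d)$ because the periodization argument is purely local, is correct and worth stating; the reproducing-formula bound $|V_\phi f|\le \nm \phi{L^2}^{-2}\, |V_\phi \phi |*|V_\phi f|$ and the shift-rule estimate for $V_\phi (D_\psi c)$ are the standard and correct way to get boundedness of $C_\phi$ and $D_\psi$.

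The one step that does not deliver the statement as written is your kernel verification. You need the cell-suprema of $v_r\cdot |V_\phi \phi |$ (and of $v_r\cdot |V_\phi \psi |$, etc.) to lie in $\ell ^{\min (1,p),\min (1,q)}$, and you secure this ``for the atoms chosen so that $c_0>r$''. But in the proposition the atoms are given data, not at your disposal: $\phi ,\psi \in \maclD ^\sigma (\rr d)$ is a Roumieu-type condition (Gevrey bounds for \emph{some} $h>0$), so $|V_\phi \phi (x,\xi )|\lesssim e^{-c_0|\xi |^{1/\sigma}}$ only for some fixed $c_0$ determined by the pair, whereas $\omega \in \mascP _{E,s}^\sigma (\rr {2d})$ only gives moderateness with respect to $e^{r(|y|^{1/s}+|\eta |^{1/\sigma})}$ for \emph{some} $r$, which may well exceed $c_0$. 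In the $x$-variable the compact support of the atoms absorbs any such factor, so the problem sits entirely in the $\xi$-variable; when $r>c_0$ your Young-type estimates for the analysis and synthesis maps break down, and what you have actually verified is the window-compatibility hypothesis (essentially $\phi ,\psi \in M^{\min (1,p,q)}_{(v_r)}$) under which the cited results apply. So as written your argument proves the proposition only under the extra assumption that the moderating exponent of $\omega$ in $\xi$ is small relative to the Gevrey constants of the fixed pair. This is harmless for the way the proposition is used in Theorems \ref{Thm:Mainthm1} and \ref{Thm:Mainthm2}, where $\omega (x,\xi )=\omega _0(x)$ (respectively $\omega _0(\xi )$ before Fourier conjugation), so the $\xi$-growth is absent; but for the statement in its full generality you should either close this regime by a genuinely different argument (exploiting that $f\cdot \overline{\phi (\cdo -x)}$ has fixed compact support, so the $\xi$-sampling can be handled by Plancherel--P{\'o}lya-type local estimates rather than by decay of $V_\phi \phi$) or record the compatibility between the atoms and $\omega$ as an explicit hypothesis.
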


\par

\begin{rem}\label{Rem:GaborExpMod}
There are weights $\omega \in \mascP _E(\rr {2d})$ such that
corresponding modulation spaces
$M^{p,q}_{(\omega )}(\rr d)$ and $W^{p,q}_{(\omega )}(\rr d)$
do not contain $\maclD ^\sigma (\rr d)$ for any choice of $\sigma >1$.
In this situation, it is not possible to find compactly supported elements
in Gabor pairs which can be used for expanding all elements in
$M^{p,q}_{(\omega )}(\rr d)$ and $W^{p,q}_{(\omega )}(\rr d)$.

\par

For a general weight $\omega \in \mascP _E(\rr {2d})$ which is moderated by
the submultiplicative weight $v \in \mascP _E(\rr {2d})$, we may always find
a lattice $\Lambda \in \rr d$ and a Gabor pair $(\phi ,\psi )$ such that
$$
\phi ,\psi \in \bigcap _{p>0}M^p_{(v)}(\rr d)
$$
and
\begin{align}
f(x)
&=
C\sum _{j,\iota \in \Lambda}V_\phi f(j,\iota )\psi (x-j)e^{i\scal x\iota}
\notag
\\[1ex]
&
=
C\sum _{j,\iota \in \Lambda}V_\psi f(j,\iota )\phi (x-j)e^{i\scal x\iota},
\qquad f\in M^\infty _{(\omega )}(\rr d),
\label{Eq:GaborExpModGen}
\end{align}
for some constant $C$, where the series convergence with respect to the
weak$^*$ topology in
$M^\infty _{(\omega )}(\rr d)$. (See \cite[Theorem S]{Gro1} and some
further comments in \cite{Toft13}. See also \cite{FeiGro1,FeiGro2,FeiGro3}
for more facts.) In such approach we still have that if
$p,q\in (0,\infty ]$, then
\begin{alignat}{3}
f &\in M^{p,q} _{(\omega )}(\rr d) &
\ &\Leftrightarrow &\ 
\{ V_\phi f(j,\iota ) \} _{j,\iota \in \Lambda} &\in 
\ell ^{p,q} _{(\omega )}(\Lambda \times \Lambda )
\notag
\\
& &
\ &\Leftrightarrow &\ 
\{ V_\psi f(j,\iota ) \} _{j,\iota \in \Lambda} &\in 
\ell ^{p,q} _{(\omega )}(\Lambda \times \Lambda ),
\\[1ex]
f &\in W^{p,q} _{(\omega )}(\rr d) &
\ &\Leftrightarrow &\ 
\{ V_\phi f(j,\iota ) \} _{j,\iota \in \Lambda} &\in 
\ell ^{p,q} _{*,(\omega )}(\Lambda \times \Lambda )
\notag
\\
& &
\ &\Leftrightarrow &\ 
\{ V_\psi f(j,\iota ) \} _{j,\iota \in \Lambda} &\in 
\ell ^{p,q} _{*,(\omega )}(\Lambda \times \Lambda ),
\end{alignat}
\begin{alignat}{2}
\nm f{M^{p.q}_{(\omega )}}
& \asymp &
\nm {V_\phi f}{\ell ^{p.q}_{(\omega )}(\Lambda \times \Lambda)}
& \asymp
\nm {V_\psi f}{\ell ^{p.q}_{(\omega )}(\Lambda \times \Lambda)}
\intertext{and}
\nm f{W^{p.q}_{(\omega )}}
& \asymp &
\nm {V_\phi f}{\ell ^{p.q}_{*,(\omega )}(\Lambda \times \Lambda)}
& \asymp
\nm {V_\psi f}{\ell ^{p.q}_{*,(\omega )}(\Lambda \times \Lambda)}.
\end{alignat}
Furthermore, if $f\in M^{p.q}_{(\omega )}(\rr d)$
($f\in W^{p.q}_{(\omega )}(\rr d)$) and in addition $p,q<\infty$, then the series in
\eqref{Eq:GaborExpModGen} converges with respect to the $M^{p.q}_{(\omega )}$
quasi-norm ($W^{p.q}_{(\omega )}$ quasi-norm).
\end{rem}

\par

\begin{rem}\label{Rem:WienerSpaces}
Let $\omega _0\in \mascP _E(\rr d)$,
$\omega \in \mascP _E(\rr {2d})$, $p,q,r\in (0,\infty]$,
$Q_{d}=[0,1]^{d}$ be the unit cube,
and set for measurable $f$ on $\rr d$,
\begin{equation}\label{Eq:WienerQuasiNormsSimple}
\nm f{\sfW ^r(\omega ,\ell ^{p})} \equiv \nm {a_0}{\ell ^{p}(\zz {d})}
\end{equation}
when
$$
a_0(j) \equiv \nm {f\cdot \omega _0}{L^r (j+Q_{d})},
\qquad
j\in \zz d,
$$
and measurable $F$ on $\rr {2d}$,
\begin{equation}\label{Eq:WienerQuasiNorms}
\nm F{\sfW ^r(\omega ,\ell ^{p,q})} \equiv \nm a{\ell ^{p,q}(\zz {2d})}
\quad \text{and}\quad
\nm F{\sfW (\omega ,\ell ^{p,q}_*)} \equiv \nm a{\ell ^{p,q}_*(\zz {2d})}
\end{equation}
when
$$
a(j,\iota ) \equiv \nm {F\cdot \omega}{L^r ((j,\iota )+Q_{2d})},
\qquad
j,\iota \in \zz d.
$$
The Wiener space
$$
\sfW ^r(\omega _0,\ell ^{p}) = \sfW ^r(\omega _0,\ell ^{p}(\zz {2}))
$$
consists of all measurable $f\in L^r _{loc}(\rr {d})$
such that $\nm F{\sfW ^r(\omega _0,\ell ^{p})}$ is finite, and 
the Wiener spaces
$$
\sfW ^r(\omega ,\ell ^{p,q}) = \sfW ^r(\omega ,\ell ^{p,q}(\zz {2d}))
\quad \text{and}\quad
\sfW ^r(\omega ,\ell ^{p,q}_*) = \sfW ^r(\omega ,\ell ^{p,q}_*(\zz {2d}))
$$
consist of all measurable $F\in L^r _{loc}(\rr {2d})$
such that $\nm F{\sfW ^r(\omega ,\ell ^{p,q})}$ respectively
$\nm F{\sfW ^r(\omega ,\ell ^{p,q}_*)}$ are finite. The topologies
are defined through their respectively quasi-norms in
\eqref{Eq:WienerQuasiNormsSimple} and
\eqref{Eq:WienerQuasiNorms}.
%
%
%
%
%
%
For conveniency we set
$$
\sfW (\omega ,\ell ^{p,q})=\sfW ^\infty (\omega ,\ell ^{p,q})
\quad \text{and}\quad
\sfW (\omega ,\ell ^{p,q}_*)=\sfW ^\infty (\omega ,\ell ^{p,q}_*).
$$

\par

Obviously, $\sfW ^r(\omega _0,\ell ^{p})$ and
$\sfW ^r(\omega ,\ell ^{p,q})$ increase with $p,q$,
decrease with $r$, and
\begin{alignat}{1}
\sfW (\omega ,\ell ^{p,q})
&\hookrightarrow
L^{p,q}_{(\omega )}(\rr {2d}) \cap \Sigma _1'(\rr {2d})
\hookrightarrow
L^{p,q}_{(\omega )}(\rr {2d})
\hookrightarrow
\sfW ^r(\omega ,\ell ^{p,q})
\intertext{and}
\nm \cdo {\sfW ^r(\omega ,\ell ^{p,q})}
&\le
\nm \cdo {L^{p,q}_{(\omega )}}
\le
\nm \cdo {\sfW (\omega ,\ell ^{p,q})},
\qquad
r\le\min (1,p,q).
\end{alignat}
On the other hand, for modulation spaces we have
\begin{equation}\label{Eq:ModWienerConn1}
f\in M^{p,q}_{(\omega )}(\rr d)
\quad \Leftrightarrow \quad
V_\phi f\in L^{p,q}_{(\omega )}(\rr {2d})
\quad \Leftrightarrow \quad
V_\phi f\in \sfW ^r(\omega ,\ell ^{p,q})
\end{equation}
with
\begin{equation}\label{Eq:ModWienerConn2}
\nm f{M^{p,q}_{(\omega )}} = \nm {V_\phi f}{L^{p,q}_{(\omega )}}
\asymp
\nm {V_\phi f}{\sfW ^r(\omega ,\ell ^{p,q})}.
\end{equation}
The same holds true with $W^{p,q}_{(\omega )}$, $L^{p,q}_{*,(\omega )}$
and $\sfW (\omega ,\ell ^{p,q}_*)$ in place of $M^{p,q}_{(\omega )}$,
$L^{p,q}_{(\omega )}$ and $\sfW (\omega ,\ell ^{p,q})$, respectively,
at each occurrence. (For $r=\infty$ , see \cite{Gro2} when $p,q\in [1,\infty ]$,
\cite{GaSa,Toft13} when $p,q\in (0,\infty ]$, and for $r\in (0,\infty ]$, see
\cite{Toft25}.)
\end{rem}

\par

\subsection{Convolutions and multiplications for discrete Lebesgue spaces}

\par

Next we discuss extended H{\"o}lder and Young relations
for multiplications and convolutions on discrete Lebesgue spaces.
Here the involved weights should satisfy
\begin{align}
\omega _0(x) &\le \prod _{j=1}^N \omega _j(x)
\label{Eq:WeightHolder}
\intertext{or} 
\omega _0(x_1+\cdots +x_N) &\le \prod _{j=1}^N \omega _j(x_j),
\label{Eq:WeightYoung}
\end{align}
and it is convenient to make use of the functional
\begin{equation}\label{Eq:QFuncDef}
R_N(p_1,\dots ,p_N)
=
\left (\sum _{j=1}^N \max \left ( 1,\frac 1{p_j} \right ) \right )
-
\min _{1\le j\le N} \left (
\max \left ( 1,\frac 1{p_j} \right )
\right ).
\end{equation}
The
H{\"o}lder and Young conditions on Lebesgue exponent are then
\begin{align}
\frac 1{q_0} &\le \sum _{j=1}^N \frac 1{q_j},
\label{Eq:LebExpHolder}
\intertext{respectively}
\frac 1{p_0} &\le \sum _{j=1}^N \frac 1{p_j} - R_N(p_1,\dots ,p_N).
\label{Eq:LebExpYoung}
\end{align}

\par

\begin{prop}\label{Prop:HolderYoungDiscrLebSpaces}
Let $p_j, q_j\in (0,\infty ]$ be such that
\eqref{Eq:QFuncDef}, \eqref{Eq:LebExpHolder} and
\eqref{Eq:LebExpYoung} hold,
$\omega _j\in \mascP _E(\rr d)$, and let
$\Lambda \subseteq \rr d$ be a lattice containing origin.
Then the following is true:
\begin{enumerate}
\item if \eqref{Eq:WeightHolder} holds
true, then the map $(a_1,\dots ,a_N)\mapsto a_1\cdots a_N$ from
$\ell _0(\Lambda )\times \cdots \times \ell _0(\Lambda )$ to $\ell _0(\Lambda )$
extends uniquely to a continuous map from
$\ell ^{q_1}_{(\omega _1)}(\Lambda )\times \cdots
\times \ell ^{q_N}_{(\omega _N)}(\Lambda )$ to $\ell ^{q_0}_{(\omega _0)}(\Lambda )$,
and
\begin{equation}\label{Eq:HolderEst}
\nm {a_1\cdots a_N}{\ell ^{q_0}_{(\omega _0)}}
\le
\prod _{j=1^N}\nm {a_j}{\ell ^{q_j}_{(\omega _j)}},
\qquad a_j\in \ell ^{q_j}_{(\omega _j)}(\Lambda ),\ j=1,\dots ,N
\text ;
\end{equation}

\vrum

\item if \eqref{Eq:WeightYoung} 
holds
true, then the map $(a_1,\dots ,a_N)\mapsto a_1*\cdots *a_N$ from
$\ell _0(\Lambda )\times \cdots \times \ell _0(\Lambda )$ to $\ell _0(\Lambda )$
extends uniquely to a continuous map from
$\ell ^{p_1}_{(\omega _1)}(\Lambda )\times \cdots
\times \ell ^{p_N}_{(\omega _N)}(\Lambda )$ to $\ell ^{p_0}_{(\omega _0)}(\Lambda )$,
and
\begin{equation}\label{Eq:YoungEst}
\nm {a_1*\cdots *a_N}{\ell ^{p_0}_{(\omega _0)}}
\le
\prod _{j=1^N}\nm {a_j}{\ell ^{p_j}_{(\omega _j)}},
\qquad a_j\in \ell ^{p_j}_{(\omega _j)}(\Lambda ),\ j=1,\dots ,N
\text .
\end{equation}
\end{enumerate}
\end{prop}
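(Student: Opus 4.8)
The plan is to reduce both parts to the corresponding estimates for the unweighted scalar (non-mixed) sequence spaces $\ell^p(\Lambda)$ and then to dispose of the weights by the moderateness hypotheses \eqref{Eq:WeightHolder} and \eqref{Eq:WeightYoung}. For the weight reduction in part (1): if $a_j \in \ell^{q_j}_{(\omega_j)}$, write $b_j = a_j\omega_j$, so $b_j \in \ell^{q_j}$, and observe that pointwise $|(a_1\cdots a_N)(x)|\,\omega_0(x) \le |b_1(x)|\cdots|b_N(x)|$ by \eqref{Eq:WeightHolder}. Thus it suffices to prove $\nm{b_1\cdots b_N}{\ell^{q_0}} \le \prod_j \nm{b_j}{\ell^{q_j}}$ whenever $1/q_0 \le \sum_j 1/q_j$. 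For part (2): if $a_j \in \ell^{p_j}_{(\omega_j)}$, then for fixed $x = x_1+\cdots+x_N$ we have, using \eqref{Eq:WeightYoung},
\[
|(a_1*\cdots*a_N)(x)|\,\omega_0(x) \le \sum_{x_1+\cdots+x_N = x} |a_1(x_1)|\omega_1(x_1)\cdots|a_N(x_N)|\omega_N(x_N) = (|b_1|*\cdots*|b_N|)(x),
\]
so it suffices to prove $\nm{b_1*\cdots*b_N}{\ell^{p_0}} \le \prod_j \nm{b_j}{\ell^{p_j}}$ whenever $1/p_0 \le \sum_j 1/p_j - R_N(p_1,\dots,p_N)$, for nonnegative sequences $b_j\in\ell^{p_j}$.

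For the unweighted multiplication estimate, first note $\ell^{q_0}$ decreases with $q_0$ in the sense that $\nm{c}{\ell^{q_0}} \le \nm{c}{\ell^{r}}$ when $r \le q_0$, so it suffices to treat the borderline case $1/q_0 = \sum_j 1/q_j$; then the $N$-linear Hölder inequality for $\ell^p$-spaces (valid for all exponents in $(0,\infty]$, since raising to the $q_0$-th power turns it into the classical Hölder inequality with exponents $q_j/q_0 \ge 1$) gives \eqref{Eq:HolderEst}. For the unweighted convolution estimate, the key observation is that for sequences on a lattice one has the embedding $\ell^p(\Lambda) \hookrightarrow \ell^r(\Lambda)$ with norm $1$ whenever $p \le r$ (the opposite of the $L^p$-behaviour on measure spaces of finite measure), so the subadditivity-type phenomena that make convolution on $\ell^p$ with $p<1$ behave well can be exploited. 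Concretely: when all $p_j \le 1$, one has the elementary bound $\nm{b*c}{\ell^{1}} \le \nm{b}{\ell^{p}}\nm{c}{\ell^{p'}}$-type estimates degenerating to $\nm{b*c}{\ell^{p}} \le \nm{b}{\ell^{p}}\nm{c}{\ell^{p}}$ for $p\le 1$ (using $(\sum_{x}|\sum_{y}b(y)c(x-y)|^p) \le \sum_x\sum_y |b(y)|^p|c(x-y)|^p$); iterating and combining with Young's inequality in its classical form for the exponents $\ge 1$ produces exactly the loss measured by $R_N$, which subtracts off the single largest of the $\max(1,1/p_j)$ terms (that index being the one where we can afford to keep an $\ell^1\to\ell^1$ or $\ell^p\to\ell^p$ leg rather than pay for a Young convolution). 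I would carry this out by induction on $N$: the case $N=2$ is the combination of classical Young (for $p_1,p_2\ge 1$ or one of them $=1$) with the $p\le 1$ convolution inequality, interpolated/patched across the regimes; the inductive step associates $b_1*\cdots*b_N = (b_1*\cdots*b_{N-1})*b_N$ and feeds the intermediate exponent through.

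The main obstacle is the bookkeeping in the convolution case: verifying that the exponent condition \eqref{Eq:LebExpYoung} with the functional $R_N$ is exactly what the iterated argument delivers, across all the mixed regimes where some $p_j < 1$ and some $p_j \ge 1$, and in particular checking that it is the \emph{minimum} over $j$ of $\max(1,1/p_j)$ that gets subtracted (i.e.\ that one optimally designates as the "free leg" the index whose $\max(1,1/p_j)$ is smallest). One clean way to organize this is to prove the two-factor statement $\ell^{p_1}*\ell^{p_2}\subseteq\ell^{p_0}$ with $1/p_0 = 1/p_1 + 1/p_2 - \max(1,1/p_1,1/p_2)$ as a self-contained lemma (this is exactly $R_2$), deduce from it the two-factor weighted statement, and then iterate: at each association step the running exponent $p_0^{(k)}$ satisfies $1/p_0^{(k)} = \sum_{i\le k}1/p_i - \big(\sum_{i\le k}\max(1,1/p_i) - \min_{i\le k}\max(1,1/p_i)\big)$, and one checks by a short induction that absorbing the next factor $b_{k+1}$ keeps this form with the running minimum updated correctly. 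Uniqueness of the continuous extension in both parts is immediate since finitely supported sequences $\ell_0(\Lambda)$ are dense in every $\ell^{q}_{(\omega)}(\Lambda)$ and $\ell^{p}_{(\omega)}(\Lambda)$ with $p,q<\infty$, and for the remaining endpoint exponents the extension is forced by the algebraic definition of the product and convolution together with the a priori bounds just established.
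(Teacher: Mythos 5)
Your proposal is correct and essentially matches the paper's own argument: part (1) is treated as the standard (quasi-)H{\"o}lder inequality, and for part (2) both proofs first remove the weights via \eqref{Eq:WeightYoung} and the isometry $a_j\mapsto a_j\omega _j$, and then induct on $N$ using only the classical Young inequality together with the estimate $\nm {a*b}{\ell ^p}\le \nm a{\ell ^p}\nm b{\ell ^q}$ for $q\le \min (1,p)$. The only difference is bookkeeping: you iterate a two-factor lemma (with $R_2(p_1,p_2)=\max (1,1/p_1,1/p_2)$) and track the invariant $1/r_k=\sum _{i\le k}1/p_i-R_k(p_1,\dots ,p_k)$, whereas the paper reduces to equality in \eqref{Eq:LebExpYoung}, treats the cases $\min _j p_j\ge 1$ and $p_{j_0}=\infty$ directly, and otherwise peels off the smallest exponent $p_N<1$ using $R_N=R_{N-1}+1/p_N$; both verifications go through.
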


\par

The assertion (1) in Proposition \ref{Prop:HolderYoungDiscrLebSpaces} is the standard
H{\"o}lder's inequality for discrete Lebesgue spaces. The assertion (2) in that
proposition is the usual Young's inequality for Lebesgue spaces on lattices
in the case when $p_1,\dots ,p_N\in [1,\infty ]$. In order to be self-contained we
give a proof when $p_1,\dots ,p_N$ are allowed to belong to the full interval
$(0,\infty ]$ in Appendix \ref{AppA}.

\par

\subsection{Step and Fourier step multipliers}

\par

Let $b\in \rr d_+$ be fixed, $\Lambda _b$ be the lattice given by
\begin{equation}\label{Eq:LatticebDef}
\Lambda _b =\sets {(b_1n_1,\dots ,b_dn_d)\in \rr d}{(n_1,\dots ,n_d)\in \zz d},
\end{equation}
$Q_b$ be the $b$-cube, given by
\begin{equation}\label{Eq:QubebDef}
Q_b = \sets {(b_1x_1,\dots ,b_dx_d)\in \rr d}{(x_1,\dots ,x_d)\in [0,1]^d}
\end{equation}
and $a_0\in \ell ^\infty (\Lambda _b)$. Then we let the Fourier step multiplier
$M_{\mascF \! ,b,a_0}$ (with respect to $b$ and $a_0$) be defined by
\begin{equation}\label{Eq:DefStepFourMult}
M_{\mascF \! ,b,a_0} \equiv \mascF ^{-1}\circ M_{b,a_0} \circ \mascF ,
\end{equation}
where $M_{b,a_0}$ is the multiplier
\begin{equation}\label{Eq:DefStepMult}
M_{b,a_0}\, :\, f\mapsto \sum _{j\in \Lambda _b}a_0(j)\chi _{j+Q_b}f.
\end{equation}
Here $\chi _\Omega$ is the characteristic function of $\Omega$.

\par

\section{Step and Fourier step multipliers on
modulation spaces}\label{sec2}

\par

In this section we deduce continuity properties for step and
Fourier step multipliers on modulation spaces (see Theorems
\ref{Thm:Mainthm1} and \ref{Thm:Mainthm2} below). In contrast to
\cite{BenGraGroOko}, the results presented here permit Lebesgue
exponents to be smaller than one

\par

We begin with step multipliers when acting on modulation spaces. Here
involved Lebesgue exponents should fullfil
\begin{equation}\label{Eq:MainThmCondLebExp}
\frac 1{q_1}-\frac 1{q_2} \ge \max \left ( \frac 1p-1,0 \right ), 
\end{equation}

\par

\begin{thm}\label{Thm:Mainthm1}
Let $p\in (0,\infty ]$, $q\in (1,\infty )$, $q_1,q_2\in (\min (1,p),\infty )$
be such that \eqref{Eq:MainThmCondLebExp} holds,
$b>0$,
$\omega _0\in \mascP _E(\rr {d})$ and $\omega (x,\xi )=\omega _0(x)$,
$x,\xi \in \rr d$. Let $a_0\in \ell ^\infty (\Lambda _b)$.
Then the following is true:
\begin{enumerate}
\item $M_{b,a_0}$ is continuous on $W^{p,q}_{(\omega )}(\rr d)$;

\vrum

\item  $M_{b,a_0}$ is continuous from $M^{p,q_1}_{(\omega )}(\rr d)$
to $M^{p,q_2}_{(\omega )}(\rr d)$.
\end{enumerate}
\end{thm}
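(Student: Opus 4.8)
The plan is to reduce both statements to discrete convolution estimates via the Gabor machinery of Proposition~\ref{Prop:GaborExpMod}, following the spirit of \cite{BenGraGroOko} but using the smooth compactly supported atoms $\phi,\psi\in\maclD^\sigma(\rr d)$ from \eqref{Eq:phiProp}--\eqref{Eq:PartUnity1}. Since the weight is momentum-independent, $\omega(x,\xi)=\omega_0(x)$, the $W^{p,q}_{(\omega)}$ quasi-norm of $f$ is equivalent to $\nm{V_\psi f}{\ell^{p,q}_{*,(\omega)}(\zz d\times\pi\zz d)}$, i.e.\ taking $\ell^q$ in the frequency index $\iota$ of the weighted $\ell^p$-norms in the spatial index $j$. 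First I would record the effect of $M_{b,a_0}$ on the Gabor coefficients: because $M_{b,a_0}$ is a multiplication by a bounded function that is constant on each cube $j+Q_b$, it acts almost diagonally on the spatial variable and, on the frequency variable $\iota$, as convolution with a fixed kernel coming from the Fourier coefficients of $\chi_{Q_b}$ relative to the atoms $\phi,\psi$. Concretely, I expect the representation \eqref{Eq:SpecGaborExp3}--\eqref{Eq:OpGaborMatrix} to give $V_\phi(M_{b,a_0}f)(j,\iota)=\sum_{\kappa}K(\iota-\kappa)\,\tilde a(j)\,V_\psi f(j,\kappa)$ up to rapidly decaying off-diagonal spatial corrections, where $\tilde a$ is a bounded resampling of $a_0$ and $K\in\ell^r$ for every $r>0$ thanks to the Gevrey regularity of $\phi,\psi$ (their Fourier transforms decay like $e^{-r|\xi|^{1/\sigma}}$ by Proposition~\ref{Prop:GSexpcond}).

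Next, for part (1), I would estimate $\nm{V_\phi(M_{b,a_0}f)}{\ell^{p,q}_{*,(\omega)}}$ by first taking the $\ell^p_{(\omega_0)}$-norm in $j$ — using $|\tilde a(j)|\le\nm{a_0}{\ell^\infty}$ and the discrete Young/Hölder estimates of Proposition~\ref{Prop:HolderYoungDiscrLebSpaces} to absorb the spatial off-diagonal tails, which lie in $\ell^r$ for small $r$ — and then taking the $\ell^q$-norm in $\iota$. The remaining object is a convolution in $\iota$ by $K$, and since $q\in(1,\infty)$ and $K\in\ell^1$, Young's inequality on $\ell^q$ gives boundedness; this is exactly where $q>1$ is used, mirroring the $L^p$, $p\ne2$ phenomenon for the Hilbert transform. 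For part (2), the same discrete representation applies, but now I must pass from an $\ell^{q_1}$-norm in $\kappa$ on the input to an $\ell^{q_2}$-norm in $\iota$ on the output. The convolution kernel $K$ is only bounded (it is essentially the discrete Hilbert-type kernel), so I cannot gain summability; instead the gain must come from the \emph{spatial} side. Here \eqref{Eq:MainThmCondLebExp}, $\tfrac1{q_1}-\tfrac1{q_2}\ge\max(\tfrac1p-1,0)$, enters: for $p\le1$ one trades the spatial $\ell^p$ quasi-triangle-inequality defect (the constant $2^{(1/p-1)}$ type losses across the $\sim2^k$ spatial cubes overlapping a fixed support, or the embedding $\ell^p\hookrightarrow\ell^1$ with exponent shift) for room in the frequency exponents; for $p\ge1$ the condition just says $q_1\le q_2$, which makes the embedding automatic. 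I would make this precise by inserting the elementary bound $\nm{c}{\ell^{q_2}}\le\nm{c}{\ell^{q_1}}$ when $q_1\le q_2$ together with a Minkowski/nesting exchange of the $\ell^p$ and $\ell^{q}$ orders that costs precisely the factor dictated by $R_N$ in \eqref{Eq:QFuncDef} when $p<1$.

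\textbf{Main obstacle.} The delicate point is the \emph{non-convex} bookkeeping when $p<1$ (and when $q_i<1$): the failure of the triangle inequality means one cannot split $M_{b,a_0}f$ over the cubes $j+Q_b$ and sum naively, so I would instead keep everything in the Gabor-coefficient picture, where the operator is a genuine matrix, and control the matrix by the $\sfW^r(\omega,\ell^{p,q})$-type norms of Remark~\ref{Rem:WienerSpaces} with $r\le\min(1,p,q_1,q_2)$, for which Proposition~\ref{Prop:HolderYoungDiscrLebSpaces} supplies the needed multi-index Hölder/Young inequalities with the correct loss $R_N$. The second subtlety is ensuring the off-diagonal spatial kernel truly decays fast enough: this requires $\phi,\psi\in\maclD^\sigma$ with $\sigma>1$ so that $\widehat\phi,\widehat\psi$ have sub-exponential decay of Gevrey order $\sigma$, which is exactly the improvement over the second-order B-splines used in \cite{BenGraGroOko} and is what allows $p,q_i$ arbitrarily close to $0$. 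Once these two points are handled, assembling the mixed-norm estimate and invoking the norm equivalence from Proposition~\ref{Prop:GaborExpMod} (or \eqref{Eq:ModWienerConn1}--\eqref{Eq:ModWienerConn2}) closes both (1) and (2); the continuity (as opposed to mere boundedness on a dense set) follows since $M_{b,a_0}$ is manifestly well defined on all of $\Sigma_1'(\rr d)$ by \eqref{Eq:DefStepMult}.
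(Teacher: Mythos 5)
There is a genuine gap, and it sits at the heart of both parts: you misidentify the decay of the frequency kernel produced by the step multiplier. Because the multiplier is a sum of characteristic functions of cubes, the Gabor matrix coefficients decay in the frequency off-diagonal only like $h_0(\iota -\kappa )=(\eabs {\iota _1-\kappa _1}\cdots \eabs {\iota _d-\kappa _d})^{-1}$: the sinc factors of $\widehat {\chi _{Q_b}}$ dominate, and the Gevrey decay of $V_\psi \phi$ merely preserves this rate (this is \eqref{Eq:FinalMatrixCoeffEst} in the paper). This kernel is \emph{not} in $\ell ^1$, and your two descriptions of it are mutually inconsistent and both wrong: in the first paragraph you claim $K\in \ell ^r$ for every $r>0$, in the second that $K$ is "only bounded". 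As a consequence, part (1) cannot be closed by "Young's inequality with $K\in \ell ^1$"; if that were available the conclusion would hold for every $q\in (0,\infty ]$ and the hypothesis $q\in (1,\infty )$ would play no role, contradicting the Hilbert-transform-type obstruction you yourself invoke. What is actually needed, and what your plan lacks, is a singular-convolution estimate: the paper's Lemma \ref{Lemma:SingKernelConv} (a discrete fractional-integration bound for the product kernel $h_0^\theta$, deduced from \cite[Theorem 4.5.3]{Ho1}), which gives boundedness of $b\mapsto h_0*b$ on $\ell ^q$ precisely for $1<q<\infty$.

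For part (2) your proposed mechanism also does not match what is required. The spatial interaction is a \emph{finite} band: since $\phi ,\psi$ are compactly supported, $a(\mabfj ,\mabfk )=0$ unless $j-k\in \Omega _2$, a set of $5^d$ points, so there is no growing family of overlapping cubes and no loss "from the spatial side" of the type you describe. The real $p<1$ difficulty appears after taking the spatial $\ell ^p_{(\omega _0)}$ quasi-norm: with $r=\min (1,p)$ the $r$-triangle inequality turns the frequency sum into $\big (h_0^r*c_0^r\big )^{1/r}$ with $c_0(\iota )=\nm {V_\phi f(\cdo ,\iota )}{\ell ^p_{(\omega _0)}}$, and one then needs convolution by $h_0^{r}$ to map $\ell ^{q_1/r}$ into $\ell ^{q_2/r}$. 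This is again Lemma \ref{Lemma:SingKernelConv}, and hypothesis \eqref{Eq:MainThmCondLebExp} is exactly what guarantees $h_0^r\le h_0^\theta$ with $\theta +r/q_1=1+r/q_2$ admissible there; the elementary embedding $\ell ^{q_1}\hookrightarrow \ell ^{q_2}$ together with Minkowski-type nesting cannot produce this gain. Finally, the closing claim that continuity is automatic because $M_{b,a_0}$ is "manifestly well defined on all of $\Sigma _1'(\rr d)$" is not correct: multiplication of an ultradistribution by a discontinuous step function is not defined in general. The paper instead defines the action through the Gabor matrix, obtains uniqueness from density of finite Gabor expansions when $p<\infty$, and treats $p=\infty$ by duality; these steps are missing from your outline.
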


\par

We observe that the conditions on $q_2$ in Theorem \ref{Thm:Mainthm1}
implies that $q_2>1$, since otherwise \eqref{Eq:MainThmCondLebExp}
should lead to $q_1\le \min (p,1)$, which contradicts the assumptions on
$q_1$.

\par

We need the following lemma for the proof of Theorem \ref{Thm:Mainthm1}.

\par

\begin{lemma}\label{Lemma:SingKernelConv}
Let $p,q\in (1,\infty )$ and $\theta \in (0,1)$ be such that
\begin{equation}\label{Eq:YoungCondInLemma}
\theta + \frac 1p=1+\frac 1q
\end{equation}
and suppose that $a=\{ a(j)\}_{j\in \zz d}\subseteq \mathbf C$ satisfies
$$
|a(j)| \lesssim \big ( \eabs {j_1}\cdots \eabs {j_d}\big )^{-\theta}.
$$
Then the map $b\mapsto a*b$ from $\ell _0(\zz d)$ to $\ell _0'(\zz d)$
is uniquely extendable to a continuous mapping from $\ell ^p(\zz d)$
to $\ell ^q(\zz d)$.
\end{lemma}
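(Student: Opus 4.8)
The statement is a weighted (here unweighted) discrete Young-type inequality where the convolution kernel $a$ is dominated by the product kernel $\big(\eabs{j_1}\cdots\eabs{j_d}\big)^{-\theta}$, and $\theta$, $p$, $q$ satisfy the Young relation $\theta+\tfrac1p=1+\tfrac1q$. Since $\theta\in(0,1)$ and $p,q\in(1,\infty)$, we are in the locally convex range, so the plan is to invoke the classical (Banach-space) Young inequality $\ell^\theta*\ell^p\subseteq\ell^q$ on $\zz d$, once we know $a\in\ell^\theta(\zz d)$ in a suitable sense. The only issue is that $\theta<1$, so $a$ need \emph{not} lie in $\ell^\theta$; what saves us is the tensor-product structure of the dominating kernel, which lets us iterate a one-dimensional argument coordinate by coordinate.

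First I would reduce to the majorant: replacing $a(j)$ by $c\,\prod_{i=1}^d \eabs{j_i}^{-\theta}$ and $b$ by $|b|$, it suffices to bound the map $b\mapsto A*b$ with $A(j)=\prod_i\eabs{j_i}^{-\theta}$ from $\ell^p(\zz d)$ to $\ell^q(\zz d)$; here $A=A_1\otimes\cdots\otimes A_d$ with $A_1(n)=\eabs{n}^{-\theta}$ on $\zz{}$. The key step is the one-dimensional case: I claim $b\mapsto A_1*b$ is bounded $\ell^p(\zz{})\to\ell^q(\zz{})$. One way is to note that $A_1(n)\eabs n^{\theta}$ is bounded and $\theta>1/p'$ fails in general, so $A_1\notin\ell^{r}$ for $r=(1+1/q-1/p)^{-1}=1/\theta$ exactly on the borderline; instead one uses the Hardy--Littlewood--Sobolev / weak-type argument: $A_1\in\ell^{1/\theta,\infty}(\zz{})$ (weak-$\ell^{1/\theta}$), and by the discrete analogue of the Young inequality for weak-type spaces (equivalently, by real interpolation, since $1<p<q<\infty$ and $\tfrac1q=\tfrac1p-(1-\theta)=\tfrac1p+\tfrac1{1/\theta}-1$ with the first exponent strictly between $1$ and $\infty$), convolution with an $\ell^{1/\theta,\infty}$ sequence maps $\ell^p\to\ell^q$ boundedly. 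Then for the $d$-dimensional case I would apply this one variable at a time: writing $A*b$ as the composition of the $d$ partial convolutions $b\mapsto A_1*_{(x_1)}b\mapsto A_1*_{(x_2)}(\cdots)$, Minkowski's integral inequality (in the discrete form) together with the one-dimensional bound, applied successively in each coordinate with the mixed-norm spaces $\ell^{p,\dots,p}$ interpolated down to $\ell^{q,\dots,q}$, gives the claim — although here one has to be slightly careful, since the exponent changes from $p$ to $q$ only once "globally" rather than once per coordinate; the cleanest route is to directly cite that $A\in\ell^{1/\theta,\infty}(\zz d)$ (its nonincreasing rearrangement is $\asymp k^{-\theta}$ up to logarithmic factors from the tensor structure — see below) and again invoke weak Young.

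The honest subtlety — and the step I expect to be the main obstacle — is that the $d$-fold tensor kernel $A=\prod_i\eabs{j_i}^{-\theta}$ is \emph{not} quite in $\ell^{1/\theta,\infty}(\zz d)$ when $d\ge 2$: its distribution function picks up a $(\log)^{d-1}$ factor, so $A\in\ell^{1/\theta,\infty}(\log\ell)^{d-1}$ rather than pure weak-$\ell^{1/\theta}$. Consequently the clean "one-shot" weak-Young argument in dimension $d$ must be replaced by the iterated one-dimensional argument, where at each coordinate one genuinely is convolving a one-dimensional weak-$\ell^{1/\theta}$ sequence and the Young relation is invoked $d$ times with compatible exponents. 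To make the exponents match I would introduce intermediate exponents $p=r_0>r_1>\cdots>r_d=q$ with $\tfrac1{r_{i-1}}-\tfrac1{r_i}=1-\theta$ at each step — but $d(1-\theta)$ need not equal $\tfrac1p-\tfrac1q=1-\theta$, so this does not literally work either. The correct and actually clean fix: at the first coordinate use the full exponent jump $p\to q$ via the (sharp, locally convex) \emph{strong}-type one-dimensional estimate $\ell^{1/\theta}*\ell^p\subseteq\ell^q$ — wait, $A_1\notin\ell^{1/\theta}$. So the genuinely correct approach is: perform the convolution in coordinate $1$ using weak-Young $\ell^{1/\theta,\infty}*_{(x_1)}\ell^p\to\ell^q$ in the $x_1$-variable (with the other coordinates as a Banach-valued parameter, legitimate since $1<p<q<\infty$), obtaining a bound into $\ell^q$ in $x_1$ and still $\ell^p$ in $x_2,\dots,x_d$; then handle coordinates $2,\dots,d$ by the trivial bound $\ell^1*\ell^q\subseteq\ell^q$ — which fails since $\sum_n\eabs n^{-\theta}=\infty$. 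This circularity is exactly why the paper presumably proves it by a direct summation estimate using \eqref{Eq:YoungCondInLemma} rather than by black-boxing interpolation; I would therefore fall back to the elementary route: split each $\eabs{j_i}^{-\theta}$-weighted sum dyadically, use Hölder with the exponent triple $(1/\theta)', p, \text{(dual)}$ coordinatewise, and sum the resulting geometric-type series, tracking that \eqref{Eq:YoungCondInLemma} is precisely what makes the bookkeeping converge. That dyadic/Hölder summation, carried out in all $d$ coordinates simultaneously, is the technical heart of the argument.
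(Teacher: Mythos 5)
There is a genuine gap: your argument stops exactly where the proof has to begin. After correctly noting that the tensor kernel $\prod_{i}\eabs{j_i}^{-\theta}$ fails to lie in weak-$\ell^{1/\theta}(\zz d)$ for $d\ge 2$, and after discarding two further routes, you fall back on ``split dyadically, use H\"older coordinatewise, and sum'' --- but this step, which you yourself call the technical heart, is never carried out, and it is not routine: at the critical exponent $\theta+\frac 1p=1+\frac 1q$ a blind H\"older application on each dyadic block does not yield a summable series; one needs either a kernel splitting at a level depending on $b$ (as in the usual Hardy--Littlewood--Sobolev argument) or a weak-type bound plus Marcinkiewicz interpolation, i.e. precisely the machinery you had just set aside. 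As written, the proposal is a plan rather than a proof. Moreover, the route you dismissed actually works: for a \emph{product} kernel each coordinate receives the full exponent jump. The one-dimensional bound $\|\eabs{\cdot}^{-\theta}*c\|_{\ell ^q(\mathbf Z)}\lesssim \|c\|_{\ell ^p(\mathbf Z)}$ (valid since $\eabs{\cdot}^{-\theta}\in \ell ^{1/\theta ,\infty}(\mathbf Z)$ and $1<p<q<\infty$), applied in $j_1$ with the other variables frozen, followed by Minkowski's inequality (legitimate because $p\le q$) to move the outer $\ell ^p$ norms past the new $\ell ^q$ norm, and then repeated in $j_2,\dots ,j_d$, lands in $\ell ^q(\zz d)$; there is no constraint of the form $d(1-\theta )=\frac 1p-\frac 1q$, because the target norm has exponent $q$ in \emph{every} variable, so your objection to the iterated scheme was a misdiagnosis.

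For comparison, the paper sidesteps all of this: it quotes H\"ormander's Theorem 4.5.3 for the continuous product kernel $h^\theta$ with $h(x)=(|x_1|\cdots |x_d|)^{-1}$ (whose proof is essentially the coordinatewise iteration above), uses $0<h_0\le h$ with $h_0(x)=(\eabs{x_1}\cdots \eabs{x_d})^{-1}$, and transfers to $\zz d$ via the step functions $g_a=\sum _{j}a(j)\chi _{j+[0,1]^d}$, for which $\|g_a\|_{L^p(\rr d)}=\|a\|_{\ell ^p(\zz d)}$; the uniqueness of the extension then follows from the density of $\ell _0(\zz d)$ in $\ell ^p(\zz d)$ for $p<\infty$. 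If you want to salvage your write-up, either complete the one-dimensional weak-Young step and run the Minkowski iteration, or discretize and cite the continuous multi-parameter inequality as the paper does.
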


\par

We observe that the conditions in Lemma \ref{Lemma:SingKernelConv}
implies that $p<q$.

\par

Lemma \ref{Lemma:SingKernelConv} is a straight-forward consequence of
\cite[Theorem 4.5.3]{Ho1}. In fact, by that theorem we have for
\begin{equation}\label{Eq:Functionsh0h}
h(x)=(|x_1|\cdots |x_d|)^{-1}
\quad \text{and}\quad
h_0(x) = (\eabs {x_1}\cdots \eabs {x_d})^{-1}
\end{equation}
that
$$
\nm {f*h^{\theta}}{L^q(\rr d)}\lesssim \nm f{L^p(\rr d)},
$$
when \eqref{Eq:YoungCondInLemma} holds.
Since $0<h_0(x)<h(x)$, we obtain
\begin{equation}\label{Eq:SingOpContEst}
\nm {f*h_0^{\theta}}{L^q(\rr d)}
\le
\nm {|f|*h^{\theta}}{L^q(\rr d)}
\lesssim
\nm f{L^p(\rr d)},
\end{equation}
which gives suitable boundedness properties for $f\mapsto f*h_0^{\theta}$.

\par

We also have
$$
\nm {g_a}{L^p(\rr d)} = \nm a{\ell ^p(\zz d)},
\qquad
g_a(x) = \sum _{j\in \zz d} a(j)\chi _{j+[0,1]^d}(x).
$$
By a straight-forward combination of this estimate with
\eqref{Eq:SingOpContEst} we obtain
$$
\nm {b*h_0^{\theta}}{\ell ^q(\zz d)}
\lesssim
\nm b{\ell ^p(\zz d)},
$$
where now $*$ denotes the discrete convolution. The continuity
assertions in Lemma \ref{Lemma:SingKernelConv} now follows from
$$
\nm {b*a}{\ell ^q(\zz d)}
\lesssim
\nm {|b|*h_0^{\theta}}{\ell ^q(\zz d)}
\lesssim
\nm b{\ell ^p(\zz d)},
$$
and the uniqueness assertions follows from the fact that
$\ell _0(\zz d)$ is dense in $\ell ^p(\zz d)$ when $p<\infty$.

\par

\begin{proof}[Proof of Theorem \ref{Thm:Mainthm1}]
By straight-forward computations it follows that if
$\omega _b(x,\xi )= \omega (bx,b^{-1}\xi )$, then
$f\in W^{p,q}_{(\omega )}(\rr d)$, if and only if $f(b\cdo )\in W^{p,q}_{(\omega _b)}(\rr d)$,
and
$$
\nm f{W^{p,q}_{(\omega )}} \asymp \nm {f(b\cdo )}{W^{p,q}_{(\omega _b)}},
$$
and similarly with $M^{p,q}$ in place of $W^{p,q}$ at each occurrence.
This reduce ourself to the case when $b=1$.

\par

Let $\phi$, $\psi$ and $\Lambda$ be the same as in
\eqref{Eq:phiProp}--\eqref{Eq:PartUnity1} and \eqref{Eq:OurLattice}.
By \eqref{Eq:OpGaborMatrix} we have
\begin{equation}\label{Eq:MultAction2}
M_{b,a_0}f(x) = \left ( \frac \pi 2 \right )^{\frac d2} \sum _{(j,\iota )\in \Lambda}
(A\cdot V_\phi f))(j,\iota )e^{i\scal x\iota}\psi (x-j),
\end{equation}
where $A=(a(\mabfj ,K))_{\mabfj ,\mabfk \in \Lambda}$
is the matrix with elements
\begin{multline*}
a(\mabfj ,\mabfk )
=
\left ( \frac \pi 2\right )^{\frac d2}
(M_{b,a_0}e^{i\scal \cdo \iota}\psi(\cdo -j),
e^{i\scal \cdo \kappa}\phi (\cdo -k) ),
\\[1ex]
\text{when} \quad 
\mabfj  = (j,\iota )\in \Lambda ,\ \mabfk =(k,\kappa ) \in \Lambda .
\end{multline*}

\par

Let $Q=[0,1]^d$ and
$$
\Omega _m = \sets {j\in \zz d}{|j_n|\le m\ \text{for every}\ n\in \{ 1,\dots ,d\}} ,
\qquad
m\in \mathbf Z_+ .
$$
By the support properties of $\phi$ and $\psi$ we have
$$
a(\mabfj ,\mabfk ) = 0
\quad \text{when}\quad
j-k\notin \Omega _2 ,
$$
and for $j-k\in \Omega _2$ we get
\begin{multline}\label{Eq:MatrixCoeffEst1}
|a(\mabfj ,\mabfk )|
\asymp
\left |
\int _{\rr d}
M_{b,a_0}(e^{-i\scal \cdo  {\kappa -\iota}}\psi(\cdo -j)
)(y)\, 
\phi (y-k)\, dy
\right |
\\[1ex]
=
\left |
\int _{\rr d}
\left (
\sum _{l\in \zz d}a_0(l)\chi _Q(y-l)e^{-i\scal y {\kappa -\iota}}\psi(y-j)
\phi (y-k)\, dy
\right )
\right |
\\[1ex]
=
\left |
\int _{\rr d}
\left (
\sum a_0(l)\chi _Q(y-(l-k))e^{-i\scal y {\kappa -\iota}}\psi(y-(j-k))
\phi (y)\, dy
\right )
\right |
\\[1ex]
\le
\sum  |a_0(l)|\cdot
\left |
\int _{\rr d}
\left (
\chi _Q(y-(l-k))e^{-i\scal y {\kappa -\iota}}\psi(y-(j-k))
\phi (y)\, dy
\right )
\right |
\\[1ex]
\le
\nm {a_0}{\ell ^\infty (\zz d)}\sum 
\left |
(\chi _Q(\cdo -(l-k)),e^{-i\scal \cdo {\kappa -\iota}}\psi(\cdo-(j-k))
\phi )_{L^2(\rr d)}
\right | ,
\end{multline}
where the last three sums are taken over all $l\in \zz d$ such that
$l-(j-k)\in \Omega _3$.

\par

We have to estimate
$$
\left |
(\chi _Q(\cdo -(l-k)),e^{-i\scal \cdo {\kappa -\iota}}\psi(\cdo-(j-k))
\phi )_{L^2(\rr d)}
\right |
$$
when $j-k\in \Omega _2$ and $l-(j-k)\in \Omega _3$. By Parseval's formula
we get
\begin{multline*}
|(\chi _Q(\cdo -(l-k)),e^{-i\scal \cdo {\kappa -\iota}}\psi(\cdo-(j-k))
\phi )_{L^2(\rr d)}|
\\[1ex]
=
|(e^{-i\scal {l-k}\cdo}g,V_\psi \phi (j-k,\cdo -(\iota -\kappa )))_{L^2(\rr d)}|,
\end{multline*}
where
$$
g(\xi ) = (2\pi )^{-\frac d2} e^{-\frac i2(\xi _1+\cdots +\xi _d)}
\sinc (\xi _1/2)\cdots \sinc (\xi _d/2).
$$
Here
$$
\sinc t
= 
\begin{cases}
\frac {\sin t}t, & t\neq 0,
\\[1ex]
1, & t=0,
\end{cases}
$$
is the sinc function.

\par

Since
$$
\sinc t\lesssim \eabs t^{-1} \quad \text{and}\quad
|V_\psi \phi (j-k,\xi -(\iota -\kappa ))|\lesssim e^{-r|\xi -(\iota -\kappa )|^{\frac 1s}}
$$
we obtain
\begin{multline*}
|(\chi _Q(\cdo -(l-k)),e^{-i\scal \cdo {\kappa -\iota}}\psi(\cdo-(j-k))
\phi )_{L^2(\rr d)}|
\\[1ex]
\le
\int _{\rr d} |g(\xi )|\cdot |V_\psi \phi (j-k,\xi -(\iota -\kappa ))|\, d\xi
\\[1ex]
\lesssim
\int _{\rr d} h_0(\xi )
e^{-r|\xi -(\iota -\kappa )|^{\frac 1s}}\, d\xi
\\[1ex]
=
\int _{\rr d}
h_0(\xi +\iota -\kappa )
e^{-r|\xi |^{\frac 1s}}\, d\xi
\\[1ex]
\le
h_0(\iota -\kappa )
\int _{\rr d}
\eabs {\xi _1}\cdots \eabs {\xi _d}
e^{-r|\xi |^{\frac 1s}}\, d\xi
\asymp
h_0(\iota -\kappa ).
\end{multline*}
Here $h_0$ is given by \eqref{Eq:Functionsh0h}, and we have used
$$
h_0(\xi +\eta) = (\eabs {\xi _1+\eta _1}\cdots \eabs{\xi _d+\eta _d})^{-1}
\le
(\eabs {\eta _1}\cdots \eabs{\eta _d})^{-1}\eabs {\xi _1}\cdots \eabs{\xi _d}.
$$

\par

By inserting this into \eqref{Eq:MatrixCoeffEst1} we get
\begin{equation*}
|a(\mabfj ,\mabfk )| \lesssim \sum_{l\in (j-k) +\Omega _3}
h_0(\iota -\kappa )
(\eabs {\iota _1-\kappa _1}\cdots \eabs {\iota _d-\kappa _d})^{-1}
=
7^d h_0(\iota -\kappa ).
\end{equation*}
Hence,
\begin{equation}\label{Eq:FinalMatrixCoeffEst}
|a(\mabfj ,\mabfk )|
\lesssim
\begin{cases}
h_0(\iota -\kappa ),
& j-k\in \Omega _2
\\[1ex]
0, & j-k\notin \Omega _2 .
\end{cases}
\end{equation}

\par

If $c(j,\iota )=|V_\phi f(j,\iota )|$, then
\eqref{Eq:FinalMatrixCoeffEst} gives
\begin{multline}\label{Eq:AOpEst}
|(A\cdot V_\phi f)(j ,\iota )| \lesssim \sum _{k\in j+\Omega _2}
\left (
\sum _{\kappa \in \pi \zz d} h_0(\iota -\kappa )c(k,\kappa)
\right )
\\[1ex]
=
\sum _{k\in j+\Omega _2} (h_0*c(k,\cdo ))(\iota )
=
\sum _{k\in \Omega _2} (h_0*c(j+k,\cdo ))(\iota ),
\end{multline}
and Lemma \ref{Lemma:SingKernelConv} gives
\begin{multline*}
\nm {(A\cdot V_\phi f)(j ,\cdo )}{\ell ^q(\pi \zz d)}
\lesssim 
\sum _{k\in \Omega _2} \nm {h_0*c(j+k,\cdo )}{\ell ^q(\pi \zz d)}
\\[1ex]
\lesssim
\sum _{k\in \Omega _2} \nm {c(j+k,\cdo )}{\ell ^q(\pi \zz d)}.
\end{multline*}

\par

By applying the $\ell ^p_{(\omega _0)}$ norm on the last inequality and raise it
to the power $r=\min (1,p)$, we obtain
\begin{multline}\label{Eq:DiscLebEstMatrixOp}
\nm {(A\cdot V_\phi f)}{\ell ^{p,q}_{*,(\omega )}(\Lambda )}^r
\lesssim
\sum _{k\in \Omega _2} \nm {c(\cdo +(k,0))}{\ell _{*,(\omega )}^{p,q}(\Lambda )}^r
\\[1ex]
=
\sum _{k\in \Omega _2} \nm {c}{\ell _{*,(\omega (\cdo -(k,0)))}^{p,q}(\Lambda )}^r
\asymp
\sum _{k\in \Omega _2} \nm {c}{\ell _{*,(\omega )}^{p,q}(\Lambda )}^r
=
5^d \nm {c}{\ell _{*,(\omega )}^{p,q}(\Lambda )}^r.
\end{multline}
Here we have used the fact that the number of elements in $\Omega _2$ is equal to
$5^d$.

\par

The asserted continuity in (1) now follows in the case when $p<\infty$ by combining
\eqref{Eq:DiscLebEstMatrixOp} and the facts that
$$
\nm {V_\phi f}{\ell ^{p,q}_{*,(\omega )}(\Lambda )}
\asymp
\nm {f}{W^{p,q}_{(\omega)}}
\quad \text{and}\quad
\nm {(A\cdot V_\phi f)}{\ell ^{p,q}_{*,(\omega )}(\Lambda )}
\asymp
\nm {M_{b,a_0}f}{W^{p,q}_{(\omega)}}.
$$
The uniqueness of the map $M_{b,a_0}$ on $W^{p,q}_{(\omega )}(\rr d)$
follows from the fact that finite sequences in \eqref{Eq:SpecGaborExp2}
are dense in $W^{p,q}_{(\omega )}(\rr d)$ gives. The case when $p=\infty$
now follows from the case when $p=1$ and duality, and (1) follows.

\par

In order to prove (2) we first consider the case when
$p<\infty$. By applying the $\ell ^p_{(\omega _0)}$
norm with respect to the $j$ variable in \eqref{Eq:AOpEst}, we get
\begin{multline*}
\nm {(A\cdot V_\phi f)(\cdo ,\iota )}{\ell _{(\omega _0)}^p}
\lesssim
\left (
\sum _{j\in \zz d}
\left ( 
\sum _{k\in \Omega _2}
(h_0*  (c(j+k,\cdo ))(\iota )\omega _0(j)
\right )^p \right )^{\frac 1p}
\\[1ex]
\asymp
\sum _{k\in \Omega _2}
\left (
h_0^r* \left ( \sum _{j\in \zz d} (c(j+k,\cdo )\omega _0(j+k))^p \right )^{\frac rp}
\right ) (\iota )
\\[1ex]
= 
\sum _{k\in \Omega _2}
( h_0^r*c_0^r)(\iota )
\asymp
( h_0^r*c_0^r)(\iota ),
\end{multline*}
where $c_0(\iota) = \nm {c(\cdo ,\iota)}{\ell ^p_{(\omega _0)}}$.

\par

Let $p_0=r^{-1}q_1$, $q_0=r^{-1}q_2$ and $u=r^{-1}$. Then
\eqref{Eq:YoungCondInLemma} holds with $p_0$ and $q_0$ in place
of $p$ and $q$, respectively. Hence by applying the $\ell ^{q_0}$ norm
on the last estimates, Lemma \ref{Lemma:SingKernelConv} gives
\begin{equation*}
\nm {A\cdot V_\phi f}{\ell _{(\omega )}^{p,q_2}}^r
\lesssim
\nm {h_0^{\theta}*c_0^r}{\ell ^{q_0}}
\lesssim
\nm {c_0^r}{\ell ^{p_0}}
= 
\nm {c_0}{\ell ^{q_1}}^r.
\end{equation*}

\par

The asserted continuity in (2) now follows in the case when $p<\infty$ by combining
\eqref{Eq:DiscLebEstMatrixOp} and the facts that
$$
\nm {V_\phi f}{\ell ^{p,q_1}_{(\omega )}(\Lambda )}
\asymp
\nm {f}{M^{p,q_1}_{(\omega)}}
\quad \text{and}\quad
\nm {(A\cdot V_\phi f)}{\ell ^{p,q_2}_{(\omega )}(\Lambda )}
\asymp
\nm {M_{b,a_0}f}{M^{p,q_2}_{(\omega)}}.
$$
The uniqueness assertions as well as the continuity in the case $p=\infty$
follow by similar arguments as in the proof of (1). The details are left for the
reader.
\end{proof}

\par

By the links between $M^{p,q}_{(\omega )}(\rr d)$ and
$W^{p,q}_{(\omega )}(\rr d)$ via the Fourier transform, explained in
Remark \ref{Rem:ModSpaces}, the following result follows from
Theorem \ref{Thm:Mainthm1} and Fourier transformation. The details
are left for the reader.

\par

\begin{thm}\label{Thm:Mainthm2}
Let $p\in (1,\infty )$, $q\in (0,\infty ]$, $p_1,p_2\in (\min (1,q),\infty )$
be such that
\begin{equation}\label{Eq:MainThmCondLebExp2}
\frac 1{p_1}-\frac 1{p_2} \ge \max \left ( \frac 1q-1 ,0 \right ), 
\end{equation}
$b>0$, $a_0\in \ell ^\infty (\Lambda _b)$,
$\omega _0\in \mascP _{E,s}(\rr {d})$ and $\omega (x,\xi )=\omega _0(\xi )$,
$x,\xi \in \rr d$. Then the following is true:
\begin{enumerate}
\item $M_{\mascF \! ,b,a_0}$ is continuous on $M^{p,q}_{(\omega )}(\rr d)$;

\vrum

\item  $M_{\mascF \! ,b,a_0}$ is continuous from $W^{p_1,q}_{(\omega )}(\rr d)$
to $W^{p_2,q}_{(\omega )}(\rr d)$.
\end{enumerate}
\end{thm}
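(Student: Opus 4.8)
The plan is to deduce Theorem \ref{Thm:Mainthm2} from Theorem \ref{Thm:Mainthm1} by conjugating with the Fourier transform, exactly as the sentence preceding the statement suggests. The key structural fact is the last bullet in Remark \ref{Rem:ModSpaces}: with $\omega _0(x,\xi )=\omega (-\xi ,x)$, the Fourier transform $\mascF$ restricts to a homeomorphism from $M^{p,q}_{(\omega )}(\rr d)$ onto $W^{q,p}_{(\omega _0)}(\rr d)$ (and, applying it again, from $W^{p,q}_{(\omega )}(\rr d)$ onto $M^{q,p}_{(\omega _0)}(\rr d)$, up to the usual reordering of the exponents and the twist in the weight). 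Since by definition $M_{\mascF \! ,b,a_0}=\mascF ^{-1}\circ M_{b,a_0}\circ \mascF$, continuity of $M_{\mascF \! ,b,a_0}$ on $M^{p,q}_{(\omega )}(\rr d)$ is equivalent to continuity of $M_{b,a_0}$ on the space $\mascF \big (M^{p,q}_{(\omega )}(\rr d)\big )$, and likewise for the off-diagonal statement.

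First I would set up the weight bookkeeping. The hypothesis in Theorem \ref{Thm:Mainthm2} is $\omega (x,\xi )=\omega _0(\xi )$ with $\omega _0\in \mascP _{E,s}(\rr d)$. Under the map $(x,\xi )\mapsto (-\xi ,x)$ the weight $\omega (x,\xi )=\omega _0(\xi )$ is transformed into a weight depending only on the \emph{first} (spatial) variable, namely $\widetilde \omega (x,\xi )=\omega _0(-x)$; this is precisely the type of weight allowed in Theorem \ref{Thm:Mainthm1} (there the weight has the form $\omega (x,\xi )=\omega _0(x)$). I would record that $\omega _0(-\,\cdot\,)\in \mascP _E(\rr d)$ whenever $\omega _0\in \mascP _E(\rr d)$, so the hypotheses of Theorem \ref{Thm:Mainthm1} are met. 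One then checks that the Lebesgue-exponent condition matches up correctly: the pair $(p,q_1,q_2)$ in Theorem \ref{Thm:Mainthm1} satisfying $\tfrac 1{q_1}-\tfrac 1{q_2}\ge \max(\tfrac 1p-1,0)$ corresponds, after swapping the roles of the two exponents under $\mascF$, to the pair $(q,p_1,p_2)$ in Theorem \ref{Thm:Mainthm2} satisfying \eqref{Eq:MainThmCondLebExp2}, and the ranges $q_1,q_2\in(\min(1,p),\infty)$ and $p_1,p_2\in(\min(1,q),\infty)$ correspond likewise.

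Concretely, for part (2): given $f\in W^{p_1,q}_{(\omega )}(\rr d)$, write $g=\mascF f$. By Remark \ref{Rem:ModSpaces}, $g\in M^{q,p_1}_{(\widetilde \omega )}(\rr d)$ with $\widetilde \omega (x,\xi )=\omega _0(-x)$, and $\nm g{M^{q,p_1}_{(\widetilde \omega )}}\asymp \nm f{W^{p_1,q}_{(\omega )}}$. Since $\widetilde \omega$ depends only on the first variable and $q\in(1,\infty)$, $p_1,p_2\in(\min(1,q),\infty)$ satisfy \eqref{Eq:MainThmCondLebExp} (with $q$ in place of $p$ and $p_1,p_2$ in place of $q_1,q_2$), Theorem \ref{Thm:Mainthm1}(2) applies and gives $M_{b,a_0}g\in M^{q,p_2}_{(\widetilde \omega )}(\rr d)$ with the corresponding norm bound. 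Applying $\mascF ^{-1}$ and Remark \ref{Rem:ModSpaces} once more sends this back to $W^{p_2,q}_{(\omega )}(\rr d)$, and $\mascF ^{-1}M_{b,a_0}\mascF f=M_{\mascF \! ,b,a_0}f$ by \eqref{Eq:DefStepFourMult}, proving (2). Part (1) is the same argument with $p_1=p_2=p$, using Theorem \ref{Thm:Mainthm1}(1) on $W^{p,q}_{(\widetilde\omega)}$; note that here the roles are exchanged, so the diagonal $M^{p,q}$-statement of Theorem \ref{Thm:Mainthm2} corresponds to the diagonal $W^{p,q}$-statement of Theorem \ref{Thm:Mainthm1}, which is why $q\in(0,\infty]$ is allowed in (1) (matching $p\in(0,\infty]$ there) while $p\in(1,\infty)$ is needed throughout (matching $q\in(1,\infty)$ there).

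The only genuinely delicate point — and the one I would be careful about rather than call routine — is verifying that the conjugated operator $\mascF ^{-1}\circ M_{b,a_0}\circ \mascF$ really coincides with $M_{\mascF \! ,b,a_0}$ as a map on the relevant modulation space, i.e.\ that the a priori definition of $M_{\mascF\!,b,a_0}$ on $\Sigma_1'(\rr d)$ is compatible with the continuous extension produced on $W^{p_1,q}_{(\omega)}$; since $\Sigma_1$ is dense in $W^{p_1,q}_{(\omega)}$ when $p_1,q<\infty$ (and one handles the endpoint cases by the increasing property of these spaces in the exponents, as in the uniqueness discussion of the introduction), this is a density/uniqueness argument of the same flavour as the ones already invoked in the proof of Theorem \ref{Thm:Mainthm1}, and I would simply remark that the details parallel that proof. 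Everything else is the mechanical translation of exponents and weights under $\mascF$ described above, which is why the authors leave it to the reader.
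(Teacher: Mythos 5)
Your proposal follows exactly the route the paper intends: Theorem \ref{Thm:Mainthm2} is deduced from Theorem \ref{Thm:Mainthm1} by conjugation with $\mascF$, using the last point of Remark \ref{Rem:ModSpaces} to exchange $M$- and $W$-spaces, swap the two Lebesgue exponents, and move the weight to the first variable, together with the same density/uniqueness considerations as in the proof of Theorem \ref{Thm:Mainthm1}. The only blemishes are cosmetic: the intermediate weight should be $\widetilde\omega (x,\xi )=\omega (-\xi ,x)=\omega _0(x)$ rather than $\omega _0(-x)$ (a harmless reflection, since both weights lie in $\mascP _E(\rr d)$ and depend only on the first variable, and consistent bookkeeping returns the correct space $W^{p_2,q}_{(\omega )}$ after applying $\mascF ^{-1}$), and in part (2) the hypothesis actually used is $q\in (0,\infty ]$ in the role of Theorem \ref{Thm:Mainthm1}'s ``$p$'', not $q\in (1,\infty )$.
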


\par

We observe that Theorem \ref{Thm:Mainthm2}
generalizes \cite[Theorem 1]{BenGraGroOko} and \cite[Theorem 4.16]{Zan}.

\par

\section{Multiplications and convolutions of quasi-Banach
modulation spaces}\label{sec3}

\par

In this section we extend the multiplication and convolution properties
on modulation spaces in \cite{Fei1,Toft3} to allow the Lebesgue
exponents to belong to the full
interval $(0,\infty ]$ instead of $[1,\infty ]$, and to allow general
moderate weights. There are several approaches
in the case when the involved Lebesgue exponents belong to
$[1,\infty ]$ (see \cite{CorGro,Fei1,FeiGro1,GuChFaZh,RSTT,Toft3}). There are
also some results when such exponents belong to the full interval
$(0,\infty ]$ (see \cite{BaCoNi,BaTe,GaSa,Rau1,Rau2,Toft13}). Here we remark that
our results in this section cover several of these earlier results. For example,
we observe that Theorem \ref{Thm:MultMod1} below
extends \cite[Proposition 3.1]{BaCoNi}.

\par

%
We recall that convolutions and multiplications on $\Sigma _1(\rr d)$
are commutative and associative. 
That is, for any $N\ge 1$, $f_1,\dots ,f_N\in \Sigma _1(\rr d)$ and
$j,k\in \{ 1,\dots ,N\}$ one has
\begin{align*}
f_1\cdots f_N = (f_1\cdots f_j)\cdot (f_{j+1}\cdots f_N)
\quad \text{and}\quad
f_1\cdots f_N = g_1\cdots g_N
\end{align*}
when
$$
g_m=f_m, \quad g_j=f_k
\quad \text{and}\quad g_k=f_j,\ m\neq j,k,
$$
and similarly for convolutions in place of multiplications at each occurrence.

\par

Because of possible lacks of density properties, we do not always
reach the uniqueness when
extending the convolutions and multiplications from the case when
each $f_j$ belong to $\Sigma _1(\rr d)$ to the case when
each $f_j$ belong to suitable modulation spaces.
In some cases we manage the
uniqueness by replacing the (quasi-)norm convergence by
a weaker convergence, the so-called narrow convergence (see 
\cite{Sjo,Toft2,Toft10}).
In the other situations we define multiplications and convolutions in
terms of short-time Fourier transforms, in similar ways as in \cite{Toft3}.

\medspace

Let $\phi _0,\dots ,\phi _N\in \Sigma _1(\rr d)$ be fixed
such that
\begin{equation}\label{Eq:MultDefFormula2A}
(\phi _1\cdots \phi _N,\phi _0)_{L^2}=(2\pi )^{-(N-1)\frac {d}2}
\end{equation}
and let $f_1,\dots ,f_N,g\in \Sigma _1(\rr d)$. Then the multiplication
$f_1\cdots f_N$ can be expressed by
\begin{multline}\label{Eq:MultDefFormula1}
(f_1  \cdots  f_N,\fy ) _{L^2(\rr d)}
= \iint _{\rr d\times \rr {Nd}} 
\left (
\prod _{j=1}^NF_j(x,\xi _j)
\right )
\overline {\Phi (x,\xi _1+\cdots +\xi _N)}\, dxd\xi 
\\[1ex]
= \idotsint _{\rr{(N+1)d}} 
\left (
\prod _{j=1}^NF_j(x,\xi _j)
\right )
\overline {\Phi (x,\xi _1+\cdots +\xi _N)}\, dxd\xi _1\cdots d\xi _N
\end{multline}
for every $\fy \in \Sigma _1(\rr d)$, where
\begin{equation}
F_j = V_{\phi _j}f_j
\quad \text{and}\quad
\Phi =V_{\phi _0}\fy .
\label{Eq:MultDefFormula2}
\end{equation}
We observe that \eqref{Eq:MultDefFormula1} is the same as
\begin{align}
F_0(x,\xi )
&=
\big ( (V_{\phi _1}f_1)(x,\cdo ) *\cdots * (V_{\phi _N}f_N)(x,\cdo ) \big ) (\xi ).
\tag*{(\ref{Eq:MultDefFormula1})$'$}
\intertext{where}
F_0(x,\xi )
&=
(\nm {\phi _0}{L^2})^{-2}\cdot V_{\phi _0}(f_1\cdots f_N)(x,\xi ),
\label{Eq:F0ProdDef}
\end{align}
and that we may extract $f_0=f_1\cdots f_N$ by the formula
\begin{equation}\label{Eq:f0Extract}
f_0 = V_{\phi _0}^*F_0 .
\end{equation}

\par

In the same way, let $\phi _0,\dots ,\phi _N\in \Sigma _1(\rr d)$ be fixed such that
\begin{equation}\label{Eq:ConvDefFormula2A}
(\phi _1*\cdots *\phi _N,\phi _0)_{L^2}=1
\end{equation}
and let $f_1,\dots ,f_N,g\in \Sigma _1(\rr d)$. Then the convolution $f_1*\cdots *f_N$
can be expressed by
\begin{multline}\label{Eq:ConvDefFormula1}
(f_1*  \cdots  *f_N,\fy ) _{L^2(\rr d)}
= \iint _{\rr {Nd} \times \rr d} 
\left (
\prod _{j=1}^NF_j(x_j,\xi )
\right )
\overline {\Phi (x_1+\cdots +x_N,\xi )}\, dx d\xi 
\\[1ex]
= \idotsint _{\rr{(N+1)d}} 
\left (
\prod _{j=1}^NF_j(x_j,\xi )
\right )
\overline {\Phi (x_1+\cdots +x_N,\xi )}\, dx_1\cdots dx_Nd\xi ,
\end{multline}
for every $\fy \in \Sigma _1(\rr d)$, where $F_j$ and $\Phi$ are
given by \eqref{Eq:MultDefFormula2}.
We observe that \eqref{Eq:ConvDefFormula1} is the same as
\begin{align}
F_0(x,\xi )
&=
\big ( (V_{\phi _1}f_1)(\cdo ,\xi ) *\cdots * (V_{\phi _N}f_N)(\cdo ,\xi ) \big ) (x).
\tag*{(\ref{Eq:ConvDefFormula1})$'$}
\intertext{where}
F_0
&=
(\nm {\phi _0}{L^2})^{-2}V_{\phi _0}(f_1*\cdots *f_N),
\label{Eq:F0ConvDef}
\end{align}
and that we may extract $f_0=f_1*\cdots *f_N$ from \eqref{Eq:f0Extract}.

\par

\begin{defn}\label{Def:MultConvSTFT}
Let $f_1,\dots ,f_N\in \Sigma _1'(\rr d)$.
\begin{enumerate}
\item Let $\phi _0,\dots ,\phi _N\in \Sigma _1(\rr d)$ be fixed
and such that \eqref{Eq:MultDefFormula2A} holds, and suppose
that the integrand in \eqref{Eq:MultDefFormula1} belongs to
$L^1(\rr {(N+1)d})$ for every $\fy \in \Sigma _1(\rr d)$, where
$F_j=V_{\phi _j}f_j$ and $\Phi =V_{\phi _0}\fy$, $j=1,\dots ,N$.
Then $f_0\equiv f_1\cdots f_N\in \Sigma _1'(\rr d)$ is defined by
\eqref{Eq:MultDefFormula1};

\vrum

\item Let $\phi _0,\dots ,\phi _N\in \Sigma _1(\rr d)$ be fixed
and such that \eqref{Eq:ConvDefFormula2A} holds, and suppose
that the integrand in \eqref{Eq:ConvDefFormula1} belongs to
$L^1(\rr {(N+1)d})$ for every $\fy \in \Sigma _1(\rr d)$, where
$F_j=V_{\phi _j}f_j$ and $\Phi =V_{\phi _0}\fy$, $j=1,\dots ,N$.
Then $f_0\equiv f_1*\cdots *f_N\in \Sigma _1'(\rr d)$ is defined by
\eqref{Eq:ConvDefFormula1}.
\end{enumerate}
\end{defn}

\par

Next we discuss convolutions and multiplications for modulation spaces,
and start with the following convolution result for modulation spaces.
Here the conditions for the involved weight functions are given by
\begin{alignat}{2}
\omega _0(x,\xi _1+\cdots +\xi _N)
\lesssim \prod _{j=1}^N \omega _j(x,\xi _j),
\qquad
x,\xi _1,\dots ,\xi _N \in \rr d
\label{Eq:WeightModMult}
\intertext{or by}
\omega _0(x_1+\dots +x_N,\xi )
\lesssim \prod _{j=1}^N \omega _j(x_j,\xi ),
\qquad
x_1,\dots ,x_N,\xi \in \rr d.
\label{Eq:WeightModConv}
\end{alignat}
For multiplications of elements in modulation spaces we need
to swap the conditions for the involved Lebesgue exponents
compared to \eqref{Eq:LebExpHolder}
and \eqref{Eq:LebExpYoung}. That is, these conditions become
\begin{alignat}{2}
\frac 1{p_0} &\le \sum _{j=1}^N \frac 1{p_j},&
\qquad
\frac 1{q_0} &\le \sum _{j=1}^N \frac 1{q_j} - R_{p_0,N}(q_1,\dots ,q_N)
\label{Eq:LebExpHolderYoung1}
\intertext{or}
\frac 1{p_0} &\le \sum _{j=1}^N \frac 1{p_j},&
\qquad
\frac 1{q_0} &\le \sum _{j=1}^N \frac 1{q_j} - R_N(q_1,\dots ,q_N),
\label{Eq:LebExpHolderYoung2}
\end{alignat}
where
\begin{align}
R_{r,N}(q_1,\dots ,q_N)
&=
\left (\sum _{j=1}^N \frac 1{r_j} \right )
-
\min _{1\le j\le N} \left (
\frac 1{r_j}
\right ),\quad
r_j = \min (1,q_j,r)
\label{Eq:QrFuncDef}
\\[1ex]
\intertext{and}
R_N(q_1,\dots ,q_N) &= R_{1,N}(q_1,\dots ,q_N).
\label{Eq:QrFuncSpecDef}
\end{align}
Evidently, $R_{r,N}(q_1,\dots ,q_N)=R_N(q_1,\dots ,q_N)$ when
$r\ge 1$.

\par

\begin{thm}\label{Thm:MultMod1}
Let $I_N=\{ 1,\dots ,N\}$, $\omega _j\in \mascP _E(\rr {2d})$
and $p_j, q_j\in (0,\infty ]$, $j\in I_N$,
be such that \eqref{Eq:WeightModMult},
\eqref{Eq:LebExpHolderYoung1} and
\eqref{Eq:QrFuncDef} hold. Then
$(f_1,\dots ,f_N)\mapsto f_1\cdots f_N$ in
Definition \ref{Def:MultConvSTFT} (1) 
restricts to a continuous, associative and symmetric
map from $M^{p_1,q_1}_{(\omega _1)}(\rr d)
\times \cdots \times M^{p_N,q_N}_{(\omega _N)}(\rr d)$
to $M^{p_0,q_0}_{(\omega _0)}(\rr d)$, and
\begin{equation}\label{Eq:MultMod1}
\nm {f_1\cdots f_N}{M^{p_0,q_0}_{(\omega _0)}}
\lesssim
\prod _{j=1}^N \nm {f_j}{M^{p_j,q_j}_{(\omega _j)}} ,
\quad
f_j\in M^{p_j,q_j}_{(\omega _j)}(\rr d),\ j\in I_N \text .
\end{equation}
Moreover, $f_1\cdots f_N$ in \eqref{Eq:MultDefFormula1} is independent
of the choice of $\phi _0,\dots ,\phi _N$ in Definition
\ref{Def:MultConvSTFT} (1).
\end{thm}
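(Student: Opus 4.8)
The plan is to transfer the estimate \eqref{Eq:MultMod1} to a purely discrete inequality via Gabor analysis, and then to deduce it from the discrete H\"older and Young inequalities of Proposition \ref{Prop:HolderYoungDiscrLebSpaces}. First I would fix Gabor atoms: if each $\omega _j$ happens to be $\mascP _{E,s}^\sigma (\rr {2d})$-moderate for suitable $s,\sigma >1$, I take compactly supported $\phi _j\in \maclD ^\sigma (\rr d)$ as in \eqref{Eq:phiProp}--\eqref{Eq:PartUnity1} and use Proposition \ref{Prop:GaborExpMod}; for general $\omega _j\in \mascP _E(\rr {2d})$ I instead use the Gabor pairs from Remark \ref{Rem:GaborExpMod}. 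In either case $\nm {f_j}{M^{p_j,q_j}_{(\omega _j)}}\asymp \nm {V_{\phi _j}f_j}{\ell ^{p_j,q_j}_{(\omega _j)}}$ on the relevant lattice, and likewise for $f_0=f_1\cdots f_N$ with the window $\phi _0$. Since on the short-time Fourier transform side multiplication corresponds to convolution in the frequency variable and a pointwise product in the position variable (see $(\ref{Eq:MultDefFormula1})'$ together with \eqref{Eq:F0ProdDef}), i.{\,}e. $F_0(x,\xi )=\big (V_{\phi _1}f_1(x,\cdot )*\cdots *V_{\phi _N}f_N(x,\cdot )\big )(\xi )$ with $F_0\propto V_{\phi _0}(f_1\cdots f_N)$, the integral over $\xi _1,\dots ,\xi _{N-1}$ can be dominated, after splitting frequency space into cubes around lattice points and using the amalgam equivalence \eqref{Eq:ModWienerConn2}, by a discrete convolution:
\begin{equation*}
|F_0(k,\kappa )|\lesssim \big ( \tc _1(k,\cdot )*\cdots *\tc _N(k,\cdot )\big )(\kappa ),
\end{equation*}
where $\tc _j(k,\kappa )$ is the supremum of $|V_{\phi _j}f_j|$ over a fixed neighbourhood of $(k,\kappa )$ and $\nm {\tc _j}{\ell ^{p_j,q_j}_{(\omega _j)}}\lesssim \nm {f_j}{M^{p_j,q_j}_{(\omega _j)}}$.

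Next I would absorb the weight. By \eqref{Eq:WeightModMult}, $\omega _0(k,\kappa )|F_0(k,\kappa )|\lesssim \big (d_1(k,\cdot )*\cdots *d_N(k,\cdot )\big )(\kappa )$ with $d_j(k,\kappa )=\tc _j(k,\kappa )\omega _j(k,\kappa )\ge 0$, so it suffices to bound the $\ell ^{p_0,q_0}$-quasi-norm (in $(k,\kappa )$) of the right-hand side by $\prod _j\nm {d_j}{\ell ^{p_j,q_j}}$. Put $s=\min (1,p_0)$. For fixed $\kappa$, the $s$-triangle inequality lets me pull the $\ell ^{p_0}$-norm in $k$ inside the summation over $\kappa _1+\cdots +\kappa _N=\kappa$, and then the discrete H\"older inequality of Proposition \ref{Prop:HolderYoungDiscrLebSpaces}(1) in the $k$-variable (which needs exactly $1/p_0\le \sum _j 1/p_j$, the first relation in \eqref{Eq:LebExpHolderYoung1}) gives
\begin{equation*}
\Big \| \big ( d_1(\cdot ,\cdot )*\cdots *d_N(\cdot ,\cdot )\big )(\kappa )\Big \|_{\ell ^{p_0}_k}\lesssim \big ( e_1^{\,s}*\cdots *e_N^{\,s}\big )(\kappa )^{1/s},\qquad e_j(\kappa )=\nm {d_j(\cdot ,\kappa )}{\ell ^{p_j}_k}.
\end{equation*}
Taking the $\ell ^{q_0}$-norm in $\kappa$ reduces the estimate to $\nm {e_1^{\,s}*\cdots *e_N^{\,s}}{\ell ^{q_0/s}}\lesssim \prod _j\nm {e_j^{\,s}}{\ell ^{q_j/s}}$, which is Proposition \ref{Prop:HolderYoungDiscrLebSpaces}(2) applied to the sequences $e_j^{\,s}\in \ell ^{q_j/s}$; its exponent hypothesis $\frac s{q_0}\le \sum _j\frac s{q_j}-R_N(q_1/s,\dots ,q_N/s)$ is, after dividing by $s$ and noting $s/\min (s,q_j)=s/\min (1,p_0,q_j)=s/r_j$ so that $R_N(q_1/s,\dots ,q_N/s)=s\,R_{p_0,N}(q_1,\dots ,q_N)$, precisely the second relation in \eqref{Eq:LebExpHolderYoung1}. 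Unwinding the substitutions and using $\nm {e_j}{\ell ^{q_j}}=\nm {d_j}{\ell ^{p_j,q_j}}=\nm {\tc _j}{\ell ^{p_j,q_j}_{(\omega _j)}}$ yields \eqref{Eq:MultMod1}; in particular the integrand in \eqref{Eq:MultDefFormula1} is integrable, so Definition \ref{Def:MultConvSTFT}(1) applies.

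It remains to record symmetry, associativity and independence of the choice of $\phi _0,\dots ,\phi _N$. Both properties hold for $f_1,\dots ,f_N\in \Sigma _1(\rr d)$, where $f_1\cdots f_N$ is the ordinary product, and they persist on the modulation spaces: when all $p_j,q_j<\infty$ this follows from the density of $\Sigma _1(\rr d)$ and the continuity just established, and the remaining cases are reduced to this one through the embeddings $M^{p,q}\hookrightarrow M^{p',q'}$ for $p\le p'$, $q\le q'$, and, in the critical situation $p_j+q_j=\infty$ for all $j$, through approximation in the narrow topology in the spirit of \cite{Sjo,Toft2,Toft10}. For window independence I would argue that, with $f_0$ defined through \eqref{Eq:MultDefFormula1}, the function $F_0=\nm {\phi _0}{L^2}^{-2}V_{\phi _0}f_0$ satisfies the projection identity \eqref{Eq:TwistedProj}, so by \eqref{Eq:TwistedProj2} $f_0$ is recovered as $\nm {\phi _0}{L^2}^{-2}V_{\phi _0}^*F_0$; combining this with the reproducing formula \eqref{Eq:STFTWindTrans} (to change the windows $\phi _j$, $j\ge 1$) and the density/narrow-convergence argument above shows that $f_0$ does not depend on the admissible tuple $(\phi _0,\dots ,\phi _N)$. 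The main obstacle I anticipate is the discrete domination step: passing from the continuous convolution representation of $V_{\phi _0}(f_1\cdots f_N)$ to a clean lattice convolution bound, uniformly in the $f_j$ and without degrading any of the quasi-Banach exponents; the $R_{p_0,N}$-bookkeeping in the two applications of Proposition \ref{Prop:HolderYoungDiscrLebSpaces} and the narrow-convergence argument for window independence in the fully non-reflexive case are the other delicate points.
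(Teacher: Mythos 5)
Your proposal is correct and is essentially the paper's own argument in transposed form: the paper proves the convolution statements (Theorems \ref{Thm:ConvMod1}--\ref{Thm:ConvMod2}) and obtains Theorem \ref{Thm:MultMod1} by Fourier conjugation, and its proof of Theorem \ref{Thm:ConvMod2} is precisely your scheme --- local suprema of the STFTs over unit cubes together with the amalgam equivalence and Proposition \ref{Prop:ProjMapModCont}, absorption of the weights via the moderateness and \eqref{Eq:WeightModConv}/\eqref{Eq:WeightModMult}, the $\min (1,\cdot )$-power (your $s$-triangle) trick to pull the inner quasi-norm through the lattice convolution, and then the two applications of Proposition \ref{Prop:HolderYoungDiscrLebSpaces} with exactly your $R_{p_0,N}$ bookkeeping. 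The only minor divergence is the concluding associativity/symmetry/window-independence step, where the paper reduces to the Banach case by embeddings and invokes (modified) Lemmas 5.2--5.4 of \cite{Toft3}, whereas you argue via density, the projection identity \eqref{Eq:TwistedProj} and narrow convergence; both routes are standard and workable.
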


\par

\begin{thm}\label{Thm:MultMod2}
Let $I_N=\{ 1,\dots ,N\}$, $\omega _j\in \mascP _E(\rr {2d})$
and $p_j, q_j\in (0,\infty ]$, $j\in I_N$,
be such that \eqref{Eq:WeightModMult},
\eqref{Eq:LebExpHolderYoung2} and
\eqref{Eq:QrFuncSpecDef} hold. Then
$(f_1,\dots ,f_N)\mapsto f_1\cdots f_N$ in
Definition \ref{Def:MultConvSTFT} (1) 
restricts to  a continuous, associative and symmetric
map from $W^{p_1,q_1}_{(\omega _1)}(\rr d)
\times \cdots \times W^{p_N,q_N}_{(\omega _N)}(\rr d)$
to $W^{p_0,q_0}_{(\omega _0)}(\rr d)$, and
\begin{equation}\label{Eq:MultMod2}
\nm {f_1\cdots f_N}{W^{p_0,q_0}_{(\omega _0)}}
\lesssim
\prod _{j=1}^N \nm {f_j}{W^{p_j,q_j}_{(\omega _j)}} ,
\quad
f_j\in W^{p_j,q_j}_{(\omega _j)}(\rr d),\ j\in I_N \text .
\end{equation}
Moreover, $f_1\cdots f_N$ in \eqref{Eq:MultDefFormula1} is independent
of the choice of $\phi _0,\dots ,\phi _N$ in Definition
\ref{Def:MultConvSTFT} (1).
\end{thm}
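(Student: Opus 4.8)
The plan is to run, for Theorem~\ref{Thm:MultMod2}, the same Gabor-analytic scheme that underlies Theorem~\ref{Thm:MultMod1}, but with the roles of the position and frequency variables interchanged, so that the \emph{frequency} index carries a convolution structure while the \emph{position} index carries a product (diagonal) structure. First I would fix a Gabor pair: when each $\omega_j\in\mascP_{E,s}^\sigma(\rr{2d})$ for some admissible $s,\sigma$, take compactly supported atoms $\phi,\psi\in\maclD^\sigma(\rr d;[0,1])$ as in \eqref{Eq:phiProp}--\eqref{Eq:PartUnity1}; for a general moderate weight one instead uses the Gabor pair with atoms in $\bigcap_{p>0}M^p_{(v)}(\rr d)$ of Remark~\ref{Rem:GaborExpMod}. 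In either case Proposition~\ref{Prop:GaborExpMod}, together with Remark~\ref{Rem:GaborExpMod}, gives that $\nm{f_i}{W^{p_i,q_i}_{(\omega_i)}}$ is equivalent to the discrete mixed norm $\nm{V_\phi f_i}{\ell^{p_i,q_i}_{*,(\omega_i)}(\Lambda\times\Lambda)}$, i.e.\ $\ell^{q_i}$ over the frequency lattice inside, $\ell^{p_i}$ over the position lattice outside.

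Next I would exploit the identity behind Definition~\ref{Def:MultConvSTFT}~(1): by \eqref{Eq:F0ProdDef} and its reformulation as a convolution in the frequency slot, for fixed $x$ the function $V_{\phi_0}(f_1\cdots f_N)(x,\cdo)$ equals, up to the constant $\nm{\phi_0}{L^2}^2$, the convolution $(V_{\phi_1}f_1)(x,\cdo)*\cdots*(V_{\phi_N}f_N)(x,\cdo)$. Discretising this on the lattice and using the near-diagonal decay of $V_\phi f_i$ in the frequency direction together with the localisation of $\psi(\cdo-k)$ in the position direction (or, in the general-weight case, the sub-exponential off-diagonal decay of the Gabor kernel), one obtains a pointwise bound of the form
\begin{equation*}
|V_\phi (f_1\cdots f_N)(j,\iota )|
\ \lesssim\
\sum _{k_1,\dots ,k_N}
\big( |V_\phi f_1(k_1,\cdo )|*\cdots *|V_\phi f_N(k_N,\cdo )|\big)(\iota ),
\end{equation*}
where the sum runs over a fixed finite set of near-coincident position multi-indices $(k_1,\dots,k_N)$ lying close to $j$. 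Testing the same computation against $\fy\in\Sigma_1$ and using the super-polynomial decay of $\Phi=V_{\phi_0}\fy$ from Lemma~\ref{Lemma:GSFourierest} (to absorb the $\xi_1+\cdots+\xi_N$ dependence via Young's inequality in the $\xi_j$) simultaneously shows that the integrand in \eqref{Eq:MultDefFormula1} lies in $L^1(\rr{(N+1)d})$, so $f_1\cdots f_N$ is well defined in the sense of Definition~\ref{Def:MultConvSTFT}~(1).

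The estimate then proceeds in two steps. Taking the weighted $\ell^{q_0}$ norm over $\iota$ first and invoking the discrete Young inequality Proposition~\ref{Prop:HolderYoungDiscrLebSpaces}~(2) — whose condition on the $q$-exponents is exactly $\frac1{q_0}\le\sum_{j}\frac1{q_j}-R_N(q_1,\dots,q_N)$ from \eqref{Eq:LebExpHolderYoung2}, with $R_N=R_{1,N}$ as in \eqref{Eq:QrFuncSpecDef} — together with the weight relation \eqref{Eq:WeightModMult} in the frequency slots and moderateness of $\omega_0$ to absorb the bounded position shifts, one gets
\begin{equation*}
\big\| V_\phi(f_1\cdots f_N)(j,\cdo )\,\omega _0(j,\cdo )\big\|_{\ell ^{q_0}}
\ \lesssim\
\sum\ \prod _{i=1}^N\big\| V_\phi f_i(k_i,\cdo )\,\omega _i(k_i,\cdo )\big\|_{\ell ^{q_i}}.
\end{equation*}
Applying next the weighted $\ell^{p_0}$ norm over $j$: the finite sum is handled by the $\min(1,p_0)$-triangle inequality and a translation reducing each $k_i$ to $j$, after which the right-hand side becomes a genuine pointwise product in $j$, and the discrete H\"older inequality Proposition~\ref{Prop:HolderYoungDiscrLebSpaces}~(1), valid under $\frac1{p_0}\le\sum_j\frac1{p_j}$, yields \eqref{Eq:MultMod2}; in particular the map is continuous. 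Independence of $\phi_0,\dots,\phi_N$, associativity and symmetry would then follow exactly as for Theorem~\ref{Thm:MultMod1}: one changes windows inside \eqref{Eq:MultDefFormula1} via the reproducing identity \eqref{Eq:STFTWindTrans}, and transfers the identities valid on $\Sigma_1(\rr d)$ by continuity of the map just constructed.

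I expect the main obstacle to be the interplay between the lack of local convexity and the \emph{order} of the iterated norms. Since some $p_j,q_j$ may lie below $1$, Minkowski's inequality for the mixed $\ell^{p,q}_*$ norm is unavailable, so the frequency-Young and position-H\"older steps cannot be freely interchanged; the argument must perform the convolution step first and carefully track the resulting loss, which is precisely why $R_N=R_{1,N}$ is enough here, whereas in the $M^{p,q}$ statement of Theorem~\ref{Thm:MultMod1} the frequency norm is the \emph{outer} one, the convolution must be carried out ``vector-valued'' over the inner $\ell^{p_0}$, and the extra dependence $\min(1,q_j,p_0)$ in \eqref{Eq:QrFuncDef} records the cost of that. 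Making this bookkeeping precise — and checking that every weight manipulation keeps the weights inside $\mascP_E$ — is the technical heart of the proof.
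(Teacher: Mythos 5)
Your analytic core is sound and in fact coincides with the paper's, but the packaging differs. The paper does not prove Theorem \ref{Thm:MultMod2} directly: it observes that it is the Fourier transform of the convolution result for $M^{p,q}$ spaces (Theorem \ref{Thm:ConvMod1}), using that $\mascF$ exchanges $M^{p,q}_{(\omega )}$ and $W^{q,p}_{(\omega _0)}$ (Remark \ref{Rem:ModSpaces}), and the proof of Theorem \ref{Thm:ConvMod1} is exactly your two-step estimate with the roles of $x$ and $\xi$ interchanged: the convolution variable sits in the \emph{inner} norm, so discrete Young (Proposition \ref{Prop:HolderYoungDiscrLebSpaces} (2)) is applied first, pointwise in the outer variable, and discrete H{\"o}lder afterwards --- which is precisely why only $R_N=R_{1,N}$ from \eqref{Eq:QrFuncSpecDef} enters, as you correctly diagnose in your last paragraph. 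The main technical difference is the discretization vehicle. You sample the STFT on a Gabor lattice, which forces either compactly supported atoms (hence a restriction of the weights to $\mascP _{E,s}^{\sigma}$) or the general frames of Remark \ref{Rem:GaborExpMod} with only sub-exponentially decaying off-diagonal terms instead of your ``fixed finite set'' of positions. The paper avoids both issues: it uses the exact identity \eqref{Eq:MultDefFormula1}$'$/\eqref{Eq:F0ProdDef} (so no near-diagonal sum in the position variable is needed at all), replaces lattice samples by local sup-norms over unit cubes (the $\sfW$ norms of Remark \ref{Rem:WienerSpaces}), and returns to the modulation space through $V_{\phi _0}^*$ and Proposition \ref{Prop:ProjMapModCont}. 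This treats every $\omega _j\in \mascP _E(\rr {2d})$ with $\Sigma _1$ windows in one stroke, so your case distinction (and the weight restriction it entails) can be dispensed with.

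The one step that does not survive as written is your closing claim that associativity, symmetry and independence of $\phi _0,\dots ,\phi _N$ ``transfer from $\Sigma _1(\rr d)$ by continuity of the map just constructed''. Continuity transfers identities only across a dense subset, and $\Sigma _1(\rr d)$ fails to be dense in $W^{p_j,q_j}_{(\omega _j)}(\rr d)$ as soon as $p_j+q_j=\infty$; this is exactly the delicate point the paper isolates in Remark \ref{Rem:ConvModUniqCase} and Theorem \ref{Thm:MultConvMod1}, where uniqueness of the extension requires extra hypotheses or narrow convergence. For the assertions actually contained in Theorem \ref{Thm:MultMod2}, the paper instead embeds each factor into a larger Banach modulation space (via $r_j=\max (p_j,1)$ and $s_j=q_j/q$), checks that the exponent conditions persist, and invokes (suitably modified) Lemmas 5.2--5.4 of \cite{Toft3}, which establish window-invariance, symmetry and associativity directly at the level of the defining integrals \eqref{Eq:MultDefFormula1}. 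Your idea of changing windows through the reproducing identity \eqref{Eq:STFTWindTrans} is the right mechanism, but it has to be run on those integrals themselves rather than by a density argument.
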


\par

The corresponding results for convolutions are the following. Here
the conditions on the involved Lebesgue exponents are swapped as
\begin{alignat}{2}
\frac 1{p_0} &\le \sum _{j=1}^N \frac 1{p_j} - R_{q_0,N}(p_1,\dots ,p_N), &
\qquad
\frac 1{q_0} &\le \sum _{j=1}^N \frac 1{q_j} 
\label{Eq:LebExpYoungHolder1}
\intertext{or}
\frac 1{p_0} &\le \sum _{j=1}^N \frac 1{p_j} - R_N(p_1,\dots ,p_N), &
\qquad
\frac 1{q_0} &\le \sum _{j=1}^N \frac 1{q_j} ,
\label{Eq:LebExpYoungHolder2}
\end{alignat}

\par

\begin{thm}\label{Thm:ConvMod1}
Let $I_N=\{ 1,\dots ,N\}$, $\omega _j\in \mascP _E(\rr {2d})$
and $p_j, q_j\in (0,\infty ]$, $j\in I_N$,
be such that \eqref{Eq:WeightModConv}, \eqref{Eq:QrFuncSpecDef}
and \eqref{Eq:LebExpYoungHolder2} hold. Then
$(f_1,\dots ,f_N)\mapsto f_1*\cdots *f_N$ in
Definition \ref{Def:MultConvSTFT} (2) 
restricts to a continuous, associative and symmetric
map from $M^{p_1,q_1}_{(\omega _1)}(\rr d)
\times \cdots \times M^{p_N,q_N}_{(\omega _N)}(\rr d)$
to $M^{p_0,q_0}_{(\omega _0)}(\rr d)$, and
\begin{equation}\label{Eq:ConvMod1}
\nm {f_1*\cdots *f_N}{M^{p_0,q_0}_{(\omega _0)}}
\lesssim
\prod _{j=1}^N \nm {f_j}{M^{p_j,q_j}_{(\omega _j)}} ,
\quad
f_j\in M^{p_j,q_j}_{(\omega _j)}(\rr d),\ j\in I_N \text .
\end{equation}
Moreover, $f_1*\cdots *f_N$ in \eqref{Eq:ConvDefFormula1}
is independent of the choice of $\phi _0,\dots ,\phi _N$ in Definition
\ref{Def:MultConvSTFT} (2).
\end{thm}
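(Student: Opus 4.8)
The plan is to follow the same Gabor‑analytic route used for the multiplication theorems (Theorems~\ref{Thm:MultMod1} and \ref{Thm:MultMod2}), only now with the roles of the $x$‑ and $\xi$‑variables exchanged, so that the discrete convolution structure lands on the space‑variable side. Concretely, I would fix a submultiplicative $v\in\mascP_E(\rr{2d})$ dominating all the $\omega_j$, choose a Gabor pair $(\phi,\psi)$ with $\phi,\psi\in\bigcap_{p>0}M^p_{(v)}(\rr d)$ and a common lattice $\Lambda$ as in Remark~\ref{Rem:GaborExpMod}, and set $\phi_0=\cdots=\phi_N=\phi$ after rescaling so that \eqref{Eq:ConvDefFormula2A} holds. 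The first step is then to show that the STFT identity \ref{Eq:ConvDefFormula1}$'$, namely $F_0(x,\xi)=\big((V_\phi f_1)(\cdo,\xi)*\cdots*(V_\phi f_N)(\cdo,\xi)\big)(x)$ up to the constant $\nm{\phi_0}{L^2}^{-2}$, makes sense and correctly represents $f_1*\cdots*f_N$ as defined in Definition~\ref{Def:MultConvSTFT}(2); this is the analogue of \eqref{Eq:F0ConvDef}--\eqref{Eq:f0Extract} and is essentially bookkeeping with Parseval's formula, using Lemma~\ref{Lemma:GSFourierest} to control the rapid decay of $V_\phi f_j$ that guarantees the integrand in \eqref{Eq:ConvDefFormula1} is in $L^1$.

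\textbf{Discretization and the key estimate.} Next I would discretize: using Proposition~\ref{Prop:GaborExpMod} (or its weighted extension in Remark~\ref{Rem:GaborExpMod}), membership of $f_j$ in $M^{p_j,q_j}_{(\omega_j)}$ is equivalent to $c_j:=\{V_\phi f_j(j',\iota)\}\in\ell^{p_j,q_j}_{(\omega_j)}(\Lambda\times\Lambda)$, and similarly $f_0\in M^{p_0,q_0}_{(\omega_0)}$ is governed by the sequence $c_0$. The continuous convolution in \ref{Eq:ConvDefFormula1}$'$ transfers, modulo a bounded ``overlap'' correction coming from the non‑orthogonality of the Gabor atoms (exactly as in the proof of Theorem~\ref{Thm:Mainthm1}, where a finite sum over $\Omega_2$ appears), to a discrete convolution in the first index and a pointwise product in the second: schematically $|c_0(x,\xi)|\lesssim\big(|c_1|(\cdo,\xi)*\cdots*|c_N|(\cdo,\xi)\big)(x)$ after summing a bounded number of lattice translates. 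The weight condition \eqref{Eq:WeightModConv} is precisely what is needed to pass from $\omega_0$ on the convolved variable to $\prod\omega_j$, i.e.\ it is the discrete weight‑Young inequality \eqref{Eq:WeightYoung}; the weight in the $\xi$‑variable is unchanged and handled by \eqref{Eq:WeightHolder}. Applying Proposition~\ref{Prop:HolderYoungDiscrLebSpaces}(2) in the first index and (1) in the second then yields \eqref{Eq:ConvMod1}, provided the Lebesgue exponents satisfy \eqref{Eq:LebExpYoung} and \eqref{Eq:LebExpHolder} with the indexing matched to the mixed‑norm order. Here is where one must be careful about the order of the norms: in $\ell^{p,q}_{(\omega)}(\Lambda\times\Lambda)$ the inner $\ell^p$ is taken first, so the correct Young exponent functional is $R_N(p_1,\dots,p_N)$ as in \eqref{Eq:QrFuncSpecDef}, giving exactly hypothesis \eqref{Eq:LebExpYoungHolder2}; no $q_0$‑dependent refinement enters because the $q$‑side is a genuine Hölder product, not a convolution.

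\textbf{Independence of the window and the algebraic properties.} Independence of $f_1*\cdots*f_N$ from the choice of $\phi_0,\dots,\phi_N$ satisfying \eqref{Eq:ConvDefFormula2A} I would obtain by the standard window‑change argument: for two admissible tuples one inserts reproducing identities of the form \eqref{Eq:STFTWindTrans} (the analogue for the space variable) and compares, exactly as one does when showing the modulation‑space (quasi‑)norm is window‑independent in Remark~\ref{Rem:ModSpaces}; once the representation is known to agree on $\Sigma_1(\rr d)$ (where it reproduces the classical convolution), the continuity estimate \eqref{Eq:ConvMod1} extends it unambiguously to the modulation spaces, at least when the relevant exponents are finite. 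Associativity and symmetry then follow because they hold on $\Sigma_1(\rr d)$ and both sides are continuous multilinear maps; for the critical case $p_j+q_j=\infty$ one argues on the STFT side, where the corresponding properties of the discrete convolution are manifest, or invokes narrow convergence as indicated in the introduction.

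\textbf{Main obstacle.} The genuinely delicate point—and the reason the $p_j<1$ case is not a routine application of Young's inequality—is the loss of local convexity: the ``overlap correction'' step, where one bounds $|c_0|$ by a finite sum of translated discrete convolutions of the $|c_j|$, uses the triangle inequality, which in $\ell^r$ with $r<1$ only holds in the $r$‑th‑power‑subadditive form $\nm{\sum a_k}{}^r\le\sum\nm{a_k}{}^r$. One must therefore raise everything to the power $r=\min(1,p_0,q_0,\dots)$ before summing the finitely many lattice shifts, exactly as in the passage to \eqref{Eq:DiscLebEstMatrixOp} in the proof of Theorem~\ref{Thm:Mainthm1}; keeping track of this, and of the fact that the number of overlapping translates is finite and controlled purely by the supports of $\phi$ and $\psi$, is where the bulk of the technical care goes. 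The multilinear quasi‑Young inequality of Proposition~\ref{Prop:HolderYoungDiscrLebSpaces}(2) (proved in Appendix~\ref{AppA}) then does the rest, and I expect the remaining verifications to be routine.
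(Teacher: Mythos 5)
Your overall architecture (discretize the STFT identity \ref{Eq:ConvDefFormula1}$'$, apply the quasi-Banach Young inequality of Proposition \ref{Prop:HolderYoungDiscrLebSpaces}\,(2) in the space index and H{\"o}lder in the frequency index, with the exponent functional $R_N$ and no $q_0$-refinement) is the same as the paper's, and your exponent bookkeeping is correct. But the central step is asserted rather than proved, and as stated it does not go through: you claim that the Gabor coefficients (point samples on $\Lambda\times\Lambda$) of $f_1\ast\cdots\ast f_N$ are dominated, ``modulo a bounded overlap correction,'' by discrete convolutions of the point samples $c_j(k,\iota)=V_\phi f_j(k,\iota)$. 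Point values of the continuous convolution $F_0(\cdo ,\xi)=\big((V_{\phi_1}f_1)(\cdo ,\xi)\ast\cdots\ast(V_{\phi_N}f_N)(\cdo ,\xi)\big)(x)$ are not controlled by discrete convolutions of point values of the $F_j$; moreover $F_0$ is (up to a constant) the STFT of $f_0$ with respect to the window $\phi_1\ast\cdots\ast\phi_N$, not $\phi_0$, so passing to the coefficients $V_{\phi_0}f_0(l,\lambda)$ involves a window change whose cross-terms in the frequency index are an infinite tail, not a finite overlap as in the proof of Theorem \ref{Thm:Mainthm1} (and in the range $p_j<1$ summing that tail requires extra regularity of the windows, cf.\ the discussion of B-splines in the introduction). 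The paper avoids exactly this by working with local suprema over unit cubes instead of point samples: it sets $a_j(k,\kappa)=\nm{V_{\phi_j}f_j}{L^\infty((k,\kappa)+Q_{2d})}$ and $b(k,\kappa)=\nm{G_2}{L^\infty((k,\kappa)+Q_{2d})}$, uses the elementary inequality $\chi_{k_1+Q}\ast\cdots\ast\chi_{k_N+Q}\le\chi_{k_1+\cdots+k_N+Q_{d,N}}$ to dominate $b(l,\lambda)$ by finitely many translates of $a_1(\cdo ,\lambda)\ast\cdots\ast a_N(\cdo ,\lambda)$, and then, crucially, invokes Proposition \ref{Prop:ProjMapModCont} (boundedness of $P_{\phi_0}$ and $V_{\phi_0}^*$ on $\sfW(\omega,\ell^{p,q})$ in the quasi-Banach range) together with \eqref{Eq:ModWienerConn1}--\eqref{Eq:ModWienerConn2} to convert the amalgam bound on $G_1$ into the $M^{p_0,q_0}_{(\omega_0)}$ bound for $f_0=V_{\phi_0}^*F_0$. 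Your outline contains no substitute for either of these two devices, and they are where the real work lies.

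A secondary weakness is the algebraic part. Your density argument for associativity, symmetry and window-independence fails precisely in the cases where some $p_j$ or $q_j$ equals $\infty$, and your fallback of narrow convergence is not available for convolutions on $M$-spaces in that critical case (the paper explicitly states in the introduction that it cannot prove uniqueness of the convolution extension there). The paper instead handles these assertions by embedding $M^{p_j,q_j}_{(\omega_j)}\subseteq M^{r_j,s_j}_{(\omega_j)}$ with $r_j=\max(p_j,1)$, $s_j=q_j/q$, $q=\min_j q_j$, thereby reducing to the Banach situation, and then appeals to (suitably modified) Lemmas 5.2--5.4 of \cite{Toft3}; some such reduction, or an explicit STFT-side argument, would need to be supplied in your write-up.
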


\par

\begin{thm}\label{Thm:ConvMod2}
Let $I_N=\{ 1,\dots ,N\}$, $\omega _j\in \mascP _E(\rr {2d})$
and $p_j, q_j\in (0,\infty ]$, $j\in I_N$,
be such that \eqref{Eq:WeightModConv}, \eqref{Eq:QrFuncDef}
and \eqref{Eq:LebExpYoungHolder1} hold. Then
$(f_1,\dots ,f_N)\mapsto f_1*\cdots *f_N$ in
Definition \ref{Def:MultConvSTFT} (2) 
restricts to a continuous, associative and symmetric
map from $W^{p_1,q_1}_{(\omega _1)}(\rr d)
\times \cdots \times W^{p_N,q_N}_{(\omega _N)}(\rr d)$
to $W^{p_0,q_0}_{(\omega _0)}(\rr d)$, and
\begin{equation}\label{Eq:ConvMod2}
\nm {f_1*\cdots *f_N}{W^{p_0,q_0}_{(\omega _0)}}
\lesssim
\prod _{j=1}^N \nm {f_j}{W^{p_j,q_j}_{(\omega _j)}} ,
\quad
f_j\in W^{p_j,q_j}_{(\omega _j)}(\rr d),\ j\in I_N \text .
\end{equation}
Moreover, $f_1*\cdots *f_N$ in \eqref{Eq:ConvDefFormula1}
is independent of the choice of $\phi _0,\dots ,\phi _N$ in Definition
\ref{Def:MultConvSTFT} (2).
\end{thm}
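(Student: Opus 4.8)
The plan is to reduce Theorem~\ref{Thm:ConvMod2} to the multi-linear discrete Young estimate on mixed-norm sequence spaces (Proposition~\ref{Prop:HolderYoungDiscrLebSpaces}) via Gabor analysis, exactly parallelling the scheme that will be used for Theorems~\ref{Thm:MultMod1}--\ref{Thm:ConvMod1}, but now with the roles of the $x$- and $\xi$-variables interchanged and with $\sfW(\omega,\ell^{p,q}_*)$ replacing the $L^{p,q}$ side. First I would fix a submultiplicative weight $v$ dominating all the $\omega_j$ and the tensor weight attached to \eqref{Eq:WeightModConv}, choose a Gabor pair $(\phi,\psi)$ (compactly supported if the weights allow it, otherwise as in Remark~\ref{Rem:GaborExpMod} with atoms in $\bigcap_{p>0}M^p_{(v)}$), and pass to the lattice $\Lambda\times\Lambda$. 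Using \eqref{Eq:F0ConvDef} together with \eqref{Eq:ConvDefFormula1}$'$ one reduces to understanding $V_{\phi_0}(f_1*\cdots*f_N)$ in terms of $V_{\phi_j}f_j$; the key computational identity is that the twisted convolution $*_V$ (see Remark~\ref{Rem:BasicPropTwistConv}) converts the physical-space convolution into, essentially, a discrete convolution in the $x$-variable of the Gabor coefficient sequences and a pointwise product in the $\xi$-variable, up to rapidly decaying error kernels coming from $V_\phi\phi$, $V_\psi\psi$, and from the mismatch of the $N$ different windows.

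The core estimate is then the following: if $c_j=\{V_{\phi_j}f_j(k,\kappa)\}$ on $\Lambda\times\Lambda$, then the coefficient sequence of $f_1*\cdots*f_N$ is controlled pointwise by
\[
\Big(\,|c_1|*_{\!x}\cdots*_{\!x}|c_N|\,\Big)\quad\text{convolved in }x\text{ against a fixed }v\text{-moderated, rapidly decaying kernel,}
\]
together with a product in $\kappa$; more precisely one obtains a bound of the form $\sum_{\ell\in\Omega}$ (finite sum over a fixed neighbourhood) of translates of $H*(|c_1(\cdot,\kappa)|*_x\cdots*_x|c_N(\cdot,\kappa)|)$ where $H\in\ell^r(\Lambda)$ for every $r>0$. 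Applying the discrete Young inequality of Proposition~\ref{Prop:HolderYoungDiscrLebSpaces}(2) in the $x$-variable with exponents $p_1,\dots,p_N$ --- whose admissibility is precisely \eqref{Eq:LebExpYoungHolder1} with the functional $R_{q_0,N}$ from \eqref{Eq:QrFuncDef}, the refinement over $R_N$ being available because after taking the $\ell^{q_0}$-norm in $\kappa$ first the effective summability index in $x$ improves to $\min(1,p_j,q_0)$ --- and the trivial product/Hölder bound $\ell^{q_1}\cdots\ell^{q_N}\hookrightarrow\ell^{q_0}$ in the $\kappa$-variable (condition \eqref{Eq:LebExpYoungHolder1}, right half), one gets
\[
\nm{c_0}{\ell^{p_0,q_0}_{*,(\omega_0)}(\Lambda\times\Lambda)}
\lesssim
\prod_{j=1}^N\nm{c_j}{\ell^{p_j,q_j}_{*,(\omega_j)}(\Lambda\times\Lambda)},
\]
which by Proposition~\ref{Prop:GaborExpMod} (its $W$-space version) and Remark~\ref{Rem:WienerSpaces} is equivalent to \eqref{Eq:ConvMod2}. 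The weight bookkeeping uses \eqref{Eq:WeightModConv} to split $\omega_0$ along the $x_j$ and the $v$-moderateness to absorb the kernel $H$, while the weak-quasi-norm convergence of the Gabor series in $M^\infty_{(\omega_j)}$ legitimises all interchanges of sums.

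For associativity, symmetry and window-independence I would argue by density on $\Sigma_1$: symmetry and associativity are inherited from the corresponding identities on $\Sigma_1(\rr d)$ stated at the start of Section~\ref{sec3}, and they extend to the modulation-space setting either by norm-density when all $p_j,q_j<\infty$, or otherwise via the narrow-convergence / duality device sketched in the introduction; independence of $(\phi_0,\dots,\phi_N)$ follows from \eqref{Eq:STFTWindTrans}--\eqref{Eq:TwistedProj2}, since changing windows composes the coefficient operator with the bounded projection $P_\phi$, which acts trivially on genuine STFTs. The main obstacle I anticipate is the non-convexity when some $p_j<1$: the pointwise domination of the Gabor coefficients of the convolution by a discrete convolution of the $|c_j|$ must be arranged \emph{before} any $\ell^p$-norm is taken (so that one never needs a triangle inequality in $\ell^p$ for $p<1$), and one must verify that the error kernel $H$ --- built from $V_{\phi_j}\phi_i$ type quantities and the periodisation coefficients --- genuinely lies in $\ell^r_{(v)}$ for arbitrarily small $r$, which is where the Gevrey/Gelfand--Shilov regularity of the atoms enters; this is the step where Proposition~\ref{Prop:HolderYoungDiscrLebSpaces}(2), proved for the full range $(0,\infty]$ in Appendix~\ref{AppA}, does the decisive work, and it is also the place where the improved functional $R_{q_0,N}$ rather than $R_N$ has to be justified by performing the $\kappa$-summation first.
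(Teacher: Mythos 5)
Your core continuity argument is essentially the paper's: both start from the exact identity (\ref{Eq:ConvDefFormula1})$'$, which says that (a suitably normalized) $V_{\phi_0}(f_1*\cdots*f_N)$ is the convolution in the $x$-variable of the $V_{\phi_j}f_j$ with nothing happening in $\xi$, discretize into mixed-norm sequence estimates, and apply Proposition \ref{Prop:HolderYoungDiscrLebSpaces}\,(2) in the space variable together with H{\"o}lder in the frequency variable, doing the $\lambda$-summation first; that last point is indeed exactly where $R_{q_0,N}$ with index $\min(1,p_j,q_0)$ comes from (in the paper: raise the pointwise bound to the power $r=\min(1,q_0)$, apply Minkowski in $\ell^{q_0/r}$ and H{\"o}lder in $\lambda$, then discrete Young with exponents $p_j/r$). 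One caveat on wording: since $R_{q_0,N}\ge R_N$, this is a \emph{cost} paid for Minkowski when $q_0<1$, not a refinement. Where you genuinely deviate is the discretization vehicle: the paper does not sample Gabor coefficients on a lattice at all, but takes local sup-norms over unit cubes (the Wiener amalgam quasi-norms of Remark \ref{Rem:WienerSpaces}), so that the continuous $x$-convolution of STFTs is dominated by a discrete convolution of the local-sup sequences with no error terms, and then returns to $W^{p_0,q_0}_{(\omega_0)}$ via $P_{\phi_0}$, $V_{\phi_0}^*$ and Proposition \ref{Prop:ProjMapModCont} together with the equivalence of modulation norms with $\sfW$-norms of $V_\phi f$. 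Your route through Gabor expansions (Proposition \ref{Prop:GaborExpMod}, Remark \ref{Rem:GaborExpMod}) with ``rapidly decaying error kernels'' from window mismatch can be made to work, but it is heavier: sampled values of a continuous convolution are not controlled by the discrete convolution of the samples, so to justify your displayed pointwise bound you would in effect have to reintroduce the local-sup (amalgam) step anyway, and the frequency off-diagonal tails of $\prod_j\widehat\psi(\xi-\kappa_j)$ have to be summed with Gelfand--Shilov decay; the paper's formulation avoids all of this.

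The weak point is your treatment of associativity, symmetry and window-independence. The paper does \emph{not} obtain these by density or narrow convergence: it observes that with $r_j=\max(p_j,1)$ and $s_j=q_j/q$, $q=\min_j q_j$, one has $W^{p_j,q_j}_{(\omega_j)}\subseteq W^{r_j,s_j}_{(\omega_j)}$ with the hypotheses preserved for Banach exponents, and then invokes (suitably modified) Lemmas 5.2--5.4 of \cite{Toft3}; these properties are statements about the value of the already-defined product, so enlarging the target space is harmless. Your fallback ``narrow-convergence / duality device'' is not available here: narrow convergence as in Definition \ref{Def:NarrowConv} is adapted to $M^{p,q}$ and to the multiplication map (it is used in Theorem \ref{Thm:MultConvMod1}), and the paper explicitly states that for the convolution it cannot establish uniqueness in the critical case $p_j+q_j=\infty$; so a density-based argument for symmetry/associativity breaks down exactly there. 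Your $P_\phi$-based sketch for window-independence is the right kind of identity argument, but making it precise (with the needed boundedness on the mixed quasi-norm spaces for $p_j,q_j<1$, or after the reduction to Banach exponents) amounts to reproving the cited lemmas, so this part of your proposal needs to be replaced or substantially fleshed out.
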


\par

For the proofs of Theorems \ref{Thm:MultMod1}--\ref{Thm:ConvMod2}
we need the following proposition. Here recall 
\cite{FeiGro1,FeiGro3,Gro2,Rau1,Rau2}
and Remark \ref{Rem:BasicPropTwistConv} for some facts concerning the 
operators
$P_\phi$ and $V_\phi ^*$.

\par

\begin{prop}\label{Prop:ProjMapModCont}
Let $p,q\in (0,\infty ]$, $\omega \in \mascP _E(\rr {2d})$,
$\phi \in \Sigma _1(\rr d)\setminus 0$ and $P_\phi$ be the projection
in Remark \ref{Rem:BasicPropTwistConv}. Then
$P_\phi$ from $\Sigma _1'(\rr {2d})$ to $\Sigma _1'(\rr {2d})$,
and $V_\phi ^*$ from $\Sigma _1'(\rr {2d})$ to $\Sigma _1'(\rr d)$
restrict to continuous mappings
\begin{align}
P_\phi : \sfW (\omega ,\ell ^{p,q}(\zz {2d}))
&\to
V_\phi (M^{p,q}_{(\omega )}(\rr d))
\hookrightarrow
\sfW (\omega ,\ell ^{p,q}(\zz {2d})),
\label{Eq:ProjMapModCont1}
\\[1ex]
P_\phi : \sfW (\omega ,\ell ^{p,q}_*(\zz {2d}))
&\to 
V_\phi (W^{p,q}_{(\omega )}(\rr d))
\hookrightarrow
\sfW (\omega ,\ell ^{p,q}_*(\zz {2d})),
\label{Eq:ProjMapModCont2}
\\[1ex]
V_\phi ^* : \sfW (\omega ,\ell ^{p,q} (\zz {2d}))
&\to 
M^{p,q}_{(\omega )}(\rr d)
\label{Eq:ProjMapModCont3}
\intertext{and}
V_\phi ^* : \sfW (\omega ,\ell ^{p,q}_*(\zz {2d}))
&\to 
W^{p,q}_{(\omega )}(\rr d).
\label{Eq:ProjMapModCont4}
\end{align}
\end{prop}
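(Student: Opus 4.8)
\emph{The plan} is to use the representation $P_\phi F=\nm{\phi}{L^2}^{-2}\,(V_\phi\phi)*_VF$ of Remark \ref{Rem:BasicPropTwistConv} together with the pointwise bound
\[
|(G*_VF)(X)|\le (2\pi)^{-d/2}\,(|G|*|F|)(X),\qquad X\in\rr{2d},
\]
which is immediate from \eqref{Eq:TwistConvDef}. Since $\phi\in\Sigma_1(\rr d)$, Proposition \ref{Prop:AdjointSTFTCont} gives $V_\phi\phi\in\Sigma_1(\rr{2d})$, so by Proposition \ref{Prop:GSexpcond} it decays sub-exponentially; hence, if $v\in\mascP_E(\rr{2d})$ is a submultiplicative weight with $\omega(X+Y)\lesssim\omega(X)v(Y)$, the product $v\cdot V_\phi\phi$ still decays sub-exponentially and $V_\phi\phi\in\sfW(v,\ell^r(\zz{2d}))$ for every $r>0$. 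Thus the whole proposition reduces to one Young-type estimate on Wiener amalgam spaces together with the projection identities of Remark \ref{Rem:BasicPropTwistConv}.

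\textbf{Step 1 (a Young estimate on Wiener amalgam spaces).} For $r\le\min(1,p,q)$ I would prove
\[
\nm{G*F}{\sfW(\omega,\ell^{p,q}(\zz{2d}))}\lesssim\nm{G}{\sfW(v,\ell^r(\zz{2d}))}\,\nm{F}{\sfW(\omega,\ell^{p,q}(\zz{2d}))},
\]
and the same with $\ell^{p,q}_*$ in place of $\ell^{p,q}$. One bounds the local $L^\infty$ mass of $\omega\cdot(G*F)$ over the unit cube centred at $n\in\zz{2d}$ by splitting the defining integral over the lattice of unit cubes; using moderateness \eqref{Eq:ModCond} to pull $\omega$ through, this mass is dominated by a fixed finite translate-sum of $(\alpha*\beta)(n)$, where $\alpha$ is the sequence of local $L^1$ masses of $v\cdot G$ (so $\nm{\alpha}{\ell^r}\le\nm{G}{\sfW(v,\ell^r)}$, the unit cube having measure one) and $\beta$ the sequence of local $L^\infty$ masses of $\omega\cdot F$ (so $\nm{\beta}{\ell^{p,q}}=\nm{F}{\sfW(\omega,\ell^{p,q})}$). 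The claim then follows from the mixed-norm discrete Young inequality $\ell^r(\zz{2d})*\ell^{p,q}(\zz{2d})\hookrightarrow\ell^{p,q}(\zz{2d})$, valid for $r\le\min(1,p,q)$; this is a routine iteration of Proposition \ref{Prop:HolderYoungDiscrLebSpaces}(2) over the two groups of variables (condition \eqref{Eq:LebExpYoung} holds with $p_0=p$ since $R_2(r,p)=1/r$, and $G$ being Gelfand--Shilov its local masses are dominated by a tensor of rapidly decreasing sequences). Combining this with the pointwise bound, with $V_\phi\phi\in\sfW(v,\ell^r)$, and with the norm equivalence $\nm{V_\phi h}{\sfW(\omega,\ell^{p,q})}\asymp\nm{h}{M^{p,q}_{(\omega)}}$ from \eqref{Eq:ModWienerConn1}--\eqref{Eq:ModWienerConn2} yields the continuity of $P_\phi$ on $\sfW(\omega,\ell^{p,q}(\zz{2d}))$ and on $\sfW(\omega,\ell^{p,q}_*(\zz{2d}))$; the embeddings in the right-hand members of \eqref{Eq:ProjMapModCont1} and \eqref{Eq:ProjMapModCont2} are Remark \ref{Rem:WienerSpaces}.

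\textbf{Step 2 (identifying the range; the $V_\phi^*$ bounds).} Let $F\in\sfW(\omega,\ell^{p,q}(\zz{2d}))$; by Remark \ref{Rem:WienerSpaces} it lies in $\Sigma_1'(\rr{2d})$, so $P_\phi F$ is defined, and by \eqref{Eq:ProjphiRule} it satisfies $P_\phi(P_\phi F)=P_\phi F$. The equivalence \eqref{Eq:TwistedProj}--\eqref{Eq:TwistedProj2} then gives $P_\phi F=V_\phi f$ with $f=\nm{\phi}{L^2}^{-2}V_\phi^*(P_\phi F)=\nm{\phi}{L^2}^{-2}V_\phi^*F\in\Sigma_1'(\rr d)$, the last equality using $\nm{\phi}{L^2}^{-2}V_\phi^*V_\phi=\mathrm{id}$. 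Since $V_\phi f=P_\phi F\in\sfW(\omega,\ell^{p,q}(\zz{2d}))$ by Step 1, \eqref{Eq:ModWienerConn1} gives $f\in M^{p,q}_{(\omega)}(\rr d)$ with $\nm{f}{M^{p,q}_{(\omega)}}\asymp\nm{P_\phi F}{\sfW(\omega,\ell^{p,q})}\lesssim\nm{F}{\sfW(\omega,\ell^{p,q})}$; this is \eqref{Eq:ProjMapModCont1}, and since $V_\phi^*F=\nm{\phi}{L^2}^{2}f$ it simultaneously gives \eqref{Eq:ProjMapModCont3}. The statements \eqref{Eq:ProjMapModCont2} and \eqref{Eq:ProjMapModCont4} follow verbatim, replacing $\sfW(\omega,\ell^{p,q})$ and $M^{p,q}_{(\omega)}$ by $\sfW(\omega,\ell^{p,q}_*)$ and $W^{p,q}_{(\omega)}$ throughout and using the corresponding parts of Remarks \ref{Rem:BasicPropTwistConv} and \ref{Rem:WienerSpaces}.

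\textbf{The main obstacle} is the Wiener-amalgam Young estimate of Step 1 in the genuinely quasi-Banach range $p,q<1$: the discretization yields not an exact discrete convolution but a bounded combination of lattice translates of one, the moderate weight $v$ must be threaded through \emph{two} superimposed convolutions (the continuous one and the discrete one on $\zz{2d}$), and the discrete Young inequality below $1$ is exactly the non-convex statement whose self-contained proof is deferred to Appendix \ref{AppA}. Everything else --- the reduction to $V_\phi\phi\in\Sigma_1$, the idempotence $P_\phi^2=P_\phi$, and the passage between the $M^{p,q}_{(\omega)}$/$W^{p,q}_{(\omega)}$ quasi-norms and the Wiener-amalgam quasi-norms --- is routine given the results already recorded.
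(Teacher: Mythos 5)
Your proposal is correct and follows essentially the same route as the paper's proof: represent $P_\phi F=\nm \phi{L^2}^{-2}\,V_\phi \phi *_VF$ with $V_\phi \phi \in \Sigma _1(\rr {2d})$, dominate the local sup-norms of $P_\phi F$ by a discrete convolution of the sequences of local norms, and conclude with Proposition \ref{Prop:HolderYoungDiscrLebSpaces} (2) using the window exponent $\min (1,p,q)$, deducing \eqref{Eq:ProjMapModCont3}--\eqref{Eq:ProjMapModCont4} and the range identification from Remark \ref{Rem:BasicPropTwistConv} and \eqref{Eq:ModWienerConn1}--\eqref{Eq:ModWienerConn2}. You merely spell out details (weight threading, the mixed-norm Young iteration, Step 2) that the paper leaves implicit.
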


\par

For $p,q\ge 1$, i.{\,}.e. the case when all spaces are Banach spaces,
proofs of Proposition \ref{Prop:ProjMapModCont} can be found
in e.{\,}g. \cite{Gro2} as well as in abstract forms in \cite{FeiGro1}.
In the general case when $p,q>0$, proofs of
Proposition \ref{Prop:ProjMapModCont} are essentially given in
\cite{GaSa,Rau2}. In order to be self-contained we here present
a short proof.

\par

\begin{proof}
By Remark \ref{Rem:BasicPropTwistConv}, the result follows if we prove
\eqref{Eq:ProjMapModCont1} and \eqref{Eq:ProjMapModCont2}, i.{\,}e.,
it suffices to prove
\begin{alignat}{2}
\nm {P_\phi F}{\sfW (\omega ,\ell ^{p,q})} &\lesssim \nm {F}{\sfW (\omega ,\ell ^{p,q})}, &
\qquad F&\in \sfW (\omega ,\ell ^{p,q}(\zz {2d}))
\label{Eq:ProjMapContForm1}
\intertext{and}
\nm {P_\phi F}{\sfW (\omega ,\ell ^{p,q}_*)} &\lesssim
\nm {F}{\sfW (\omega ,\ell ^{p,q}_*)}, &
\qquad F&\in {\sfW (\omega ,\ell ^{p,q}_*(\zz {2d}))}.
\label{Eq:ProjMapContForm2}
\end{alignat}
We only prove \eqref{Eq:ProjMapContForm1}.
The estimate \eqref{Eq:ProjMapContForm2} follows by similar arguments and is left
for the reader.

\par

Let
$$
a_j = \nm F{L^\infty (j+Q_{2d})}
\quad \text{and}\quad
b_j = \nm {V_\phi \phi}{L^\infty (j+Q_{2d})}.
$$
Since $V_\phi \phi \in \Sigma _1(\rr {2d})$,
Proposition \ref{Prop:HolderYoungDiscrLebSpaces} gives
\begin{equation*}
\nm {P_\phi F}{\sfW (\omega ,\ell ^{p,q})}
\lesssim
\nm {a*b}{\ell ^{p,q}}
\lesssim
\nm b{\ell ^{\min (1,p,q)}}
\nm a{\ell ^{p,q}}
\asymp
\nm F{\sfW (\omega ,\ell ^{p,q})}. \qedhere
\end{equation*}
\end{proof}

\par

Theorems \ref{Thm:MultMod1} and \ref{Thm:MultMod2} are Fourier
transformations of Theorems \ref{Thm:ConvMod1} and \ref{Thm:ConvMod2}.
Hence it suffices to prove the last two theorems.

\par

\begin{proof}[Proof of Theorems \ref{Thm:ConvMod1} and \ref{Thm:ConvMod2}]
First we prove \eqref{Eq:ConvMod1}.
Suppose
$f_j\in M^{p_j,q_j}_{(\omega _j)}(\rr d)$, and consider the cubes
$$
Q_{d,r} =[0,r]^d
\quad \text{and}\quad
Q= Q_{d,1}= [0,1]^d.
$$ 
Then
$$
0\le \chi _{k_1+Q}*\cdots *\chi _{k_N+Q} \le \chi _{k_1+\cdots +k_N+Q_{d,N}},
\qquad
k_1,\dots k_N\in \zz d.
$$

\par

Let
\begin{align*}
G_1(x,\xi ) &= (V_{\phi _1}f_1(\cdo ,\xi )*\cdots *V_{\phi _N}f_N(\cdo ,\xi ))(x),
\\[1ex]
G_2(x,\xi ) &= (|V_{\phi _1}f_1(\cdo ,\xi )|*\cdots *|V_{\phi _N}f_N(\cdo ,\xi )|)(x),
\\[1ex]
a_j(k,\kappa ) &= \nm {V_{\phi _j}f_j}{L^\infty ((k,\kappa )+Q_{2d,1})},
\intertext{and}
b(k,\kappa ) &=  \nm {G_2}{L^\infty ((k,\kappa )+Q_{2d,1})}
\end{align*}
Then
\begin{alignat}{2}
\nm {V_{\phi _0}^*G_1}{M^{p_0,q_0}_{(\omega _0)}}
&\asymp
\nm {P_{\phi _0}G_1}{\sfW (\omega _0,\ell ^{p_0,q_0})}
\lesssim
\nm {G_1}{\sfW (\omega _0,\ell ^{p_0,q_0})}
\notag
\\[1ex]
&\le
\nm {G_2}{\sfW (\omega _0,\ell ^{p_0,q_0})}
\asymp
\nm b{\ell ^{p_0,q_0}_{(\omega _0)}},
\label{Eq:ModCoorbitNormRel}
\intertext{and}
\nm {f_j}{M^{p_j,q_j}_{(\omega _j)}} &\asymp \nm {a_j}{\ell ^{p_j,q_j}_{(\omega _j)}}
\label{Eq:ModCoorbitNormRel2}
\end{alignat}
in view of \cite[Proposition 3.4]{Toft13} (see also Theorem 3.3 in \cite{GaSa}) and
Proposition \ref{Prop:ProjMapModCont}.

\par

By \eqref{Eq:ConvDefFormula1} we have
\begin{multline}\label{Eq:G(x,lambda)Est}
G_2(x,\lambda )
\le
\sum _{k_1,\dots ,k_N\in \zz d}\left ( \prod _{j=1}^Na_j(k_j,\lambda )\right )
(\chi _{k_1+Q}*\cdots \chi _{k_N+Q})(x)
\\[1ex]
\le
\sum _{k_1,\dots ,k_N\in \zz d} \left ( \prod _{j=1}^Na_j(k_j,\lambda )\right )
\chi _{k_1+\cdots +k_N+Q_{d,N}}(x).
\end{multline}
We observe that
$$
\chi _{k_1+\cdots +k_N+Q_{d,N}}(x)=0
\quad \text{when}\quad
x\notin l+Q_d,\ (k_1,\dots ,k_N)\notin \Omega _l,
$$
where
$$
\Omega _l
=
\sets {(k_1,\dots ,k_N)\in \zz {Nd}}{l_j-N \le k_{1,j}+\cdots +k_{N,j}\le l_j+1},
$$
and
$$
k_n=(k_{n,1},\dots ,k_{n,d})\in \zz d
\quad \text{and}\quad
l=(l_1,\dots ,l_d)\in \zz d,\qquad n=1,\dots ,N.
$$
Hence, if $x=l$ in \eqref{Eq:G(x,lambda)Est}, we get
\begin{multline}\label{Eq:GExprEst}
b(l,\lambda )
\le
\sum _{(k_1,\dots ,k_N)\in \Omega _l}\left ( \prod _{j=1}^Na_j(k_j,\lambda )\right )
\\[1ex]
\le
\sum _{m\in I_{N+1}}(a_1(\cdo ,\lambda )*\cdots *a_N(\cdo ,\lambda ))(l-Ne_0+m),
\end{multline}
where $e_0=(1,\dots ,1)\in \zz d$ and $I_N = \{ 0,\dots ,N\} ^d$.
By multiplying with $\omega _0(l,\lambda )$, using \eqref{Eq:WeightModConv},
the fact that $I_N$ is a finite set and that $\omega _0$ is moderate,
we obtain
\begin{multline*}
b(l,\lambda )\omega _0(l,\lambda )
\le
\sum _{m\in I_{N+1}}(a_1(\cdo ,\lambda )*\cdots *a_N(\cdo ,\lambda ))
(l-Ne_0+m)\omega _0(l,\lambda )
\\[1ex]
\le
\sum _{m\in I_{N+1}}(a_1(\cdo ,\lambda )*\cdots *a_N(\cdo ,\lambda ))
(l-Ne_0+m)\omega _0(l-Ne_0+m,\lambda )
\\[1ex]
\lesssim
\sum _{m\in I_{N+1}}\big ( (a_1(\cdo ,\lambda )\omega _1(\cdo ,\lambda ))
*\cdots *(a_N(\cdo ,\lambda ) \omega _N(\cdo ,\lambda )) \big )
(l-Ne_0+m).
\end{multline*}
Hence \eqref{Eq:GExprEst} gives
\begin{multline}\tag*{(\ref{Eq:GExprEst})$'$}
b_{\omega _0}(l,\lambda )
\lesssim
\sum _{(k_1,\dots ,k_N)\in \Omega _l}
\left ( \prod _{j=1}^Na_{j,\omega _j}(k_j,\lambda )\right )
\\[1ex]
=
\sum _{m\in I_{N+1}} ( a_{1,\omega _1}(\cdo ,\lambda )
*\cdots *a_{N,\omega _N}(\cdo ,\lambda )) 
(l-Ne_0+m),
\end{multline}
where
$$
a_{j,\omega _j}(k,\kappa ) = a_j(k,\kappa)\omega _j(k,\kappa )
\quad \text{and}\quad
b_{\omega _0}(k,\kappa ) = b(k,\kappa)\omega _0(k,\kappa ).
$$

\par

If we apply the $\ell ^{p_0}$ quasi-norm on \eqref{Eq:GExprEst}
with respect to the $l$ variable, then Proposition
\ref{Prop:HolderYoungDiscrLebSpaces} (2) and the fact that
$I_{N+1}$ is a finite set give
\begin{multline*}
\nm {b_{\omega _0}(\cdo ,\lambda )}{\ell ^{p_0}}
\lesssim
\NM {\sum _{m\in I_{N+1}} ( a_{1,\omega _1}(\cdo ,\lambda )
*\cdots *a_{N,\omega _N}(\cdo ,\lambda )) 
(\cdo -Ne_0+m)}{\ell ^{p_0}}
\\[1ex]
\lesssim
\sum _{m\in I_{N+1}} \nm {( a_{1,\omega _1}(\cdo ,\lambda )
*\cdots *a_{N,\omega _N}(\cdo ,\lambda )) 
(\cdo -Ne_0+m)}{\ell ^{p_0}}
\\[1ex]
\asymp
\nm {a_{1,\omega _1}(\cdo ,\lambda )
*\cdots *a_{N,\omega _N}(\cdo ,\lambda )}{\ell ^{p_0}}
\\[1ex]
\le
\nm {a_{1,\omega _1}(\cdo ,\lambda )}{\ell ^{p_1}}
\cdots
\nm {a_{N,\omega _N}(\cdo ,\lambda )}{\ell ^{p_N}}.
\end{multline*}
By applying the $\ell ^{q_0}$ quasi-norm and using
Proposition
\ref{Prop:HolderYoungDiscrLebSpaces} (1)
we now get
$$
\nm {b_{\omega _0}}{\ell ^{p_0,q_0}}
\lesssim
\nm {a_{1,\omega _1}}{\ell ^{p_1,q_1}}\cdots \nm {a_{N,\omega _N}}{\ell ^{p_N,q_N}}.
$$
This is the same as
$$
\nm {G_2}{L^{p_0,q_0}_{(\omega _0)}}
\lesssim
\nm {F_1}{L^{p_1,q_1}_{(\omega _1)}}\cdots \nm {F_N}{L^{p_N,q_N}_{(\omega _N)}}.
$$
A combination of this estimate with \eqref{Eq:ModCoorbitNormRel} and
\eqref{Eq:ModCoorbitNormRel2} gives that $f_1*\cdots *f_N$ is well-defined
and that \eqref{Eq:ConvMod1} holds.

\par

Next we prove \eqref{Eq:ConvMod2}.
Let $r=\min (1,q_0)$. Then \eqref{Eq:GExprEst}$'$ gives
\begin{multline*}
b_{\omega _0}(l,\lambda )^r
\lesssim
\sum _{(k_1,\dots ,k_N)\in \Omega _l}
\left ( \prod _{j=1}^Na_{j,\omega _j}(k_j,\lambda )^r\right )
\\[1ex]
=
\sum _{m\in I_{N+1}} ( a_{1,\omega _1}(\cdo ,\lambda )^r
*\cdots *a_{N,\omega _N}(\cdo ,\lambda )^r) 
(l-Ne_0+m).
\end{multline*}
By applying the $\ell ^{q_0/r}$ norm with respect to the $\lambda$
variable and using Minkowski's and H{\"o}lder's
inequalities we obtain
\begin{multline*}
\nm {b_{\omega _0}(l,\cdo )}{\ell ^{q_0}}^r
=
\nm {b_{\omega _0}(l,\cdo )^r}{\ell ^{q_0/r}}
\lesssim
\sum _{(k_1,\dots ,k_N)\in \Omega _l}
\NM
{\prod _{j=1}^Na_{j,\omega _j}(k_j,\cdo )^r}{\ell ^{q_0/r}}
\\[1ex]
\le
\sum _{(k_1,\dots ,k_N)\in \Omega _l}
\left ( \prod _{j=1}^N c_{j,\omega _j}(k_j)^r \right )
=
\sum _{m\in I_{N+1}} ( c_{1,\omega _1}^r
*\cdots *c_{N,\omega _N}^r) 
(l-Ne_0+m),
\end{multline*}
where
$$
c_{j,\omega _j}(k) = \nm {a_{j,\omega _j}(k,\cdo )^r}{\ell ^{q_j/r}}^{1/r}
=
\nm {a_{j,\omega _j}(k,\cdo )}{\ell ^{q_j}}.
$$

\par

An application of the $\ell ^{p_0/r}$ quasi-norm on the last inequality and
using Proposition \ref{Prop:HolderYoungDiscrLebSpaces} (2)
now gives
\begin{multline*}
\nm {b_{\omega _0}}{\ell ^{q_0,p_0}_*}^r
\lesssim
\sum _{m\in I_{N+1}}  \nm {(c_{1,\omega _1}^r
*\cdots *c_{N,\omega _N}^r) (\cdo -Ne_0+m)}{\ell ^{p_0/r}}
\\[1ex]
\asymp
\nm {c_{1,\omega _1}^r
*\cdots *c_{N,\omega _N}^r}{\ell ^{p_0/r}}
\le
\nm {c_{1,\omega _1}^r}{\ell ^{p_1/r}}
\cdots 
\nm {c_{N,\omega _N}^r}{\ell ^{p_N/r}}
\\[1ex]
=
\big (\nm {c_{1,\omega _1}}{\ell ^{p_1}}
\cdots 
\nm {c_{N,\omega _N}}{\ell ^{p_N}} \big )^r,
\end{multline*}
which is the same as
$$
\nm {G_2}{L^{p_0,q_0}_{*,(\omega _0)}}
\lesssim
\nm {F_1}{L^{p_1,q_1}_{*,(\omega _1)}}
\cdots
\nm {F_N}{L^{p_N,q_N}_{*,(\omega _N)}},
$$
which in particular shows that $f_1*\cdots *f_N$ is well-defined.
Since
$$
\nm {f_1*\cdots *f_N}{W^{p_0,q_0}_{(\omega _0)}}\asymp
\nm G{L^{p_0,q_0}_{*,(\omega _0)}}
\quad \text{and}\quad
\nm {f_j}{W^{p_j,q_j}_{(\omega _j)}}
\asymp
\nm {F_j}{L^{p_j,q_j}_{*,(\omega _j)}},\ j=1,\dots ,N,
$$
we get \eqref{Eq:ConvMod2}.

\par

We need to prove
the associativity, symmetry and invariance
with respect to $\phi _0,\dots ,\phi _N$ in Definition
\ref{Def:MultConvSTFT}. We observe that if
$$
r_j=\max (p_j,1)
\quad \text{and}\quad
s_j=\frac {q_j}q,\ q=\min _{0\le j\le N}(q_j),
\quad j=1,\dots ,N,
$$
then $M^{p_j,q_j}_{(\omega _j)}(\rr d)\subseteq M^{r_j,s_j}_{(\omega _j)}(\rr d)$,
$j=1,\dots ,N$. By straight-forward computations it follows that if
\eqref{Eq:LebExpYoungHolder1} or \eqref{Eq:LebExpYoungHolder2} hold, then
\eqref{Eq:LebExpYoungHolder1} respectively \eqref{Eq:LebExpYoungHolder2} still
hold with $r_j$ and $s_j$ in place of $p_j$ and $q_j$, respectively, $j=1,\dots ,N$,
for some $r_0,s_0\in [1,\infty ]$.
This reduce ourself
to the case when $p_j,q_j\in [1,\infty ]$ for every $j=0,\dots ,N$, in which case all
modulation spaces are Banach spaces. We observe that
Lemmas 5.2--5.4 and their proofs in \cite{Toft3} still hold true
when $\omega _j$ are allowed to belong to the class $\mascP _E(\rr {2d})$, provided
the involved window functions $\chi _j$ belong to $\Sigma _1(\rr d)$, and all distributions
are allowed to belong to $\Sigma _1'$ instead of $\mascS '$. 
The the associativity, symmetric assertions and
invariant properties with respect to the choice of $\phi _0,\dots ,\phi _N$ in Definition
\ref{Def:MultConvSTFT} now follows from these modified Lemmas 5.2--5.4 in 
\cite{Toft3}
and their proofs. This gives the results.
\end{proof}

\par

\begin{rem}\label{Rem:ConvModUniqCase}
Suppose that $p_1$, $q_j$ and $\omega _j$ are the same as in
Theorems \ref{Thm:MultMod1}--\ref{Thm:ConvMod2}, and that
$p_j+q_j=\infty$ for at most one $j\in \{ 1,\dots ,N\}$. Then it follows
that extensions of the mappings $(f_1,\dots ,f_N)\mapsto f_1\cdots f_N$
and $(f_1,\dots ,f_N)\mapsto f_1*\cdots *f_N$ from 
$\Sigma _1(\rr d)\times \cdots \times \Sigma _1(\rr d)$ to
$\Sigma _1(\rr d)$ in Theorems \ref{Thm:MultMod1}--\ref{Thm:ConvMod2}
are \emph{unique}.

\par

In fact, by the proof of Theorem \ref{Thm:MultConvMod1} below,
we may assume that $p_j,q_j\ge 1$ for every $j$. If $p_j,q_j<\infty$ for every
$j\in \{1,\dots ,N\}$, then the uniquenesses
follow from \eqref{Eq:MultMod1}, \eqref{Eq:MultMod2}, \eqref{Eq:ConvMod1},
\eqref{Eq:ConvMod2} and the fact that $\Sigma _1(\rr d)$ is dense in each
$M^{p_j,q_j}_{(\omega _j)}(\rr d)$ and $W^{p_j,q_j}_{(\omega _j)}(\rr d)$
for $j\in \{ 1,\dots ,N\}$. For the general situation, the assertion follows from
the previous case and duality.
\end{rem}

\par

Evidently, Theorems \ref{Thm:MultMod1}--\ref{Thm:ConvMod2}
show that multiplications and convolutions on $\Sigma _1(\rr d)$
can be extended to involve suitable quasi-Banach modulation spaces.
Remark \ref{Rem:ConvModUniqCase} shows that in most situations,
these extensions from products on $\Sigma _1(\rr d)$ are unique.
For the multiplication and convolution mappings in
Theorems \ref{Thm:MultMod1} and \ref{Thm:ConvMod2} we can say more.

\par

\begin{thm}\label{Thm:MultConvMod1}
Let $\omega _j\in \mascP _E(\rr {2d})$
and $p_j, q_j\in (0,\infty ]$ and $j\in \{0,\dots ,N\}$. Then the following is true:
\begin{enumerate}
\item if \eqref{Eq:WeightModMult},
\eqref{Eq:LebExpHolderYoung1} and \eqref{Eq:QrFuncDef}
hold, then
$(f_1,\dots ,f_N)\mapsto f_1\cdots f_N$ 
from $\Sigma _1(\rr d)\times \cdots \times \Sigma _1(\rr d)$
to $\Sigma _1(\rr d)$ is uniquely extendable to
a continuous map from $M^{p_1,q_1}_{(\omega _1)}(\rr d)
\times \cdots \times M^{p_N,q_N}_{(\omega _N)}(\rr d)$
to $M^{p_0,q_0}_{(\omega _0)}(\rr d)$, and
\eqref{Eq:MultMod1} holds;

\vrum

\item if \eqref{Eq:WeightModConv}, \eqref{Eq:QrFuncDef} and
\eqref{Eq:LebExpYoungHolder1} hold, then
$(f_1,\dots ,f_N)\mapsto f_1*\cdots *f_N$ 
from $\Sigma _1(\rr d)\times \cdots \times \Sigma _1(\rr d)$
to $\Sigma _1(\rr d)$ is uniquely extendable to
a continuous map from $W^{p_1,q_1}_{(\omega _1)}(\rr d)
\times \cdots \times W^{p_N,q_N}_{(\omega _N)}(\rr d)$
to $W^{p_0,q_0}_{(\omega _0)}(\rr d)$, and
\eqref{Eq:ConvMod2} holds;
\end{enumerate}
\end{thm}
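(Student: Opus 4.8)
The estimates \eqref{Eq:MultMod1} and \eqref{Eq:ConvMod2}, together with continuity of the maps in question on the indicated products of modulation spaces, are already furnished by Theorems \ref{Thm:MultMod1} and \ref{Thm:ConvMod2}. Moreover the short-time Fourier transform products of Definition \ref{Def:MultConvSTFT} restrict to the classical products on $\Sigma _1(\rr d)$ by the very derivation of \eqref{Eq:MultDefFormula1} and \eqref{Eq:ConvDefFormula1}, and $\Sigma _1(\rr d)$ is an algebra under both multiplication and convolution. Hence the maps in Theorems \ref{Thm:MultMod1} and \ref{Thm:ConvMod2} \emph{are} continuous extensions of the corresponding products on $\Sigma _1(\rr d)$, and only the \emph{uniqueness} of such an extension has to be established.

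First I would reduce (2) to (1) by Fourier transformation. By Remark \ref{Rem:ModSpaces}, $\mascF$ is a homeomorphism from $W^{p,q}_{(\omega )}(\rr d)$ onto $M^{q,p}_{(\widetilde \omega )}(\rr d)$ for $\widetilde \omega (x,\xi )=\omega (-\xi ,x)$, it maps $\Sigma _1(\rr d)$ onto itself, and it turns convolution into a fixed multiple of pointwise multiplication. A routine check — the same bookkeeping already used to pass between Theorems \ref{Thm:MultMod1}--\ref{Thm:MultMod2} and Theorems \ref{Thm:ConvMod1}--\ref{Thm:ConvMod2} — shows that the hypotheses \eqref{Eq:WeightModConv}, \eqref{Eq:QrFuncDef} and \eqref{Eq:LebExpYoungHolder1} on the data $(\omega _j,p_j,q_j)$ become the hypotheses \eqref{Eq:WeightModMult}, \eqref{Eq:QrFuncDef} and \eqref{Eq:LebExpHolderYoung1} on the transformed data $(\widetilde \omega _j,q_j,p_j)$. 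So (2) follows from (1) applied to the transformed data, uniqueness included, since $\mascF$ is a homeomorphism fixing $\Sigma _1(\rr d)$.

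For the uniqueness in (1) I would proceed in three steps. If $p_j,q_j<\infty$ for every $j\in \{ 1,\dots ,N\}$, then $\Sigma _1(\rr d)$ is dense in each $M^{p_j,q_j}_{(\omega _j)}(\rr d)$ and uniqueness follows at once from \eqref{Eq:MultMod1}. In the general case I would first reduce to the Banach range $p_j,q_j\ge 1$ for all $j\in \{ 0,\dots ,N\}$ by the embedding device used in the proofs of Theorems \ref{Thm:ConvMod1}--\ref{Thm:ConvMod2} (with the roles of the space and frequency exponents interchanged): scaling the $p_j$ and enlarging the $q_j$ yields continuous embeddings $M^{p_j,q_j}_{(\omega _j)}(\rr d)\hookrightarrow M^{\widetilde p_j,\widetilde q_j}_{(\omega _j)}(\rr d)$ with $\widetilde p_j,\widetilde q_j\ge 1$ under which \eqref{Eq:LebExpHolderYoung1} and \eqref{Eq:QrFuncDef} remain valid. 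Since $M^{p_0,q_0}_{(\omega _0)}(\rr d)$ embeds continuously into $M^{\widetilde p_0,\widetilde q_0}_{(\omega _0)}(\rr d)$ and a continuous map into $M^{p_0,q_0}_{(\omega _0)}(\rr d)$ already exists by Theorem \ref{Thm:MultMod1}, uniqueness of the extension into the larger target forces uniqueness into the smaller one.

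It remains to handle the Banach range with some exponents equal to $\infty$, which is the heart of the matter. When at least one factor lies in a space with both exponents finite, one tests $f_1\cdots f_N$ against $\phi \in \Sigma _1(\rr d)\subseteq M^{p_0',q_0'}_{(1/\omega _0)}(\rr d)$, absorbs $\phi$ together with the Schwartz factors into a single slot, and uses the duality $M^{p,q}_{(\omega )}(\rr d)=(M^{p',q'}_{(1/\omega )}(\rr d))'$ for $p,q\ge 1$ to reduce to the previously treated finite-exponent situation (cf. the discussion in Section \ref{sec0} and \cite{Toft2,Toft3,Toft10}). In the fully critical case $p_j+q_j=\infty$ for all $j$, I would instead approximate the factors one at a time by elements of $\Sigma _1(\rr d)$ converging in the narrow topology, along which the product \eqref{Eq:MultDefFormula1} is continuous by dominated convergence; this pins down any continuous extension. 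I expect the main obstacle to be exactly this last point: verifying that narrow convergence of a single factor is inherited by the $N$-linear product for general moderate weights $\omega _j\in \mascP _E(\rr {2d})$. Everything else is bookkeeping that has essentially been carried out earlier in the paper.
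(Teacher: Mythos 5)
Your overall route coincides with the paper's: existence and the estimates come from Theorems \ref{Thm:MultMod1} and \ref{Thm:ConvMod2}, assertion (2) is reduced to (1) by Fourier transformation, the general exponents are reduced to the Banach range by monotonicity of $M^{p,q}_{(\omega )}$ in $p$ and $q$ (the paper takes $p_j=\infty$ and replaces $q_j$ by $\max (1,q_j)$, so that \eqref{Eq:LebExpHolderYoung1} turns into \eqref{Eq:Youngq}), and the lack of density of $\Sigma _1(\rr d)$ is handled by narrow convergence combined with dominated convergence (Lemma \ref{Lemma:GenLeb} and \eqref{Eq:RemNarrowConv}).

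There is, however, a genuine gap in your treatment of the critical case. You propose to approximate \emph{each} factor by elements of $\Sigma _1(\rr d)$ converging narrowly; but narrow density of $\Sigma _1(\rr d)$ in $M^{p,q}_{(\omega )}(\rr d)$ (Proposition \ref{Prop:NarrowConv}) requires $q<\infty$, so this step is simply not available for a factor lying in $M^{p_j,\infty}_{(\omega _j)}(\rr d)$ --- and such factors are exactly what makes the case critical. The missing observation, which is the heart of the paper's argument, is that after the reduction to $q_j\ge 1$ with equality in \eqref{Eq:LebExpHolderYoung1}, i.e. \eqref{Eq:Youngq}, \emph{at most one} exponent $q_{j_0}$ can be infinite; the paper then approximates narrowly only the factors with $j\neq j_0$ and leaves $f_{j_0}$ untouched, the integral in \eqref{Eq:MultDefFormula1} converging by Lemma \ref{Lemma:GenLeb} together with the domination in \eqref{Eq:RemNarrowConv}. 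In other words, the obstacle is not the one you single out (that narrow convergence of a single factor is inherited by the $N$-linear form --- that is precisely what dominated convergence delivers), but that you have no narrow approximation at all for a factor with infinite $q$-exponent; without the structural consequence of \eqref{Eq:LebExpHolderYoung1} your scheme does not close. Your intermediate duality step is also vaguer than what the paper does --- after the reduction the paper's single narrow-convergence argument covers all remaining cases uniformly --- but the essential missing idea is the ``at most one $q_j=\infty$'' observation and the accompanying decision not to approximate that factor.
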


\par

The problems with uniqueness in Theorem \ref{Thm:MultConvMod1}
appear when one or more Lebesgue
exponents are equal to infinity, since $\Sigma _1(\rr d)$ fails to be dense
in corresponding modulation spaces. In these situations we shall use
\emph{narrow convergence}, introduced in \cite{Sjo},
and is a weaker form of convergence than the norm convergence.

\par

\begin{defn}\label{Def:NarrowConv}
Let $\omega \in \mascP _E(\rr {2d})$, $p,q\in [1,\infty ]$,
$f,f_j\in M^{p,q}_{(\omega )}(\rr d)$, $j\ge 1$ and let
$$
H_{f,\omega ,p}(\xi )\equiv \nm {V_\phi f(\cdo ,\xi )\omega (\cdo ,\xi )}{L^p(\rr d)}.
$$
Then $f_j$ is said to converge to $f$
\emph{narrowly} as $j\to \infty$, if the following conditions are fulfilled:
\begin{enumerate}
\item $f_j\to f$ in $\Sigma _1'(\rr d)$ as $j\to \infty$;

\vrum

\item $H_{f_j,\omega ,p}\to H_{f,\omega ,p}$ in $L^q(\rr d)$ as $j\to \infty$.
\end{enumerate}
\end{defn}

\par

The following result is a special case of Theorem 4.17 in \cite{Toft10}. The proof
is therefore omitted.

\par

\begin{prop}\label{Prop:NarrowConv}
Let $\omega \in \mascP _E(\rr {2d})$ and $p,q\in [1,\infty ]$ be
such that $q<\infty$. Then
$\Sigma _1(\rr d)$ is dense in $M^{p,q}_{(\omega )}(\rr d)$
with respect to the narrow convergence.
\end{prop}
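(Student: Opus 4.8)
The plan is to handle the ranges $p<\infty$ and $p=\infty$ separately.

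First, when $p,q\in[1,\infty)$ I would reduce to ordinary \emph{norm} density: $\Sigma_1(\rr d)$ is norm dense in $M^{p,q}_{(\omega)}(\rr d)$, and norm convergence implies narrow convergence, since for $f_n\to f$ in $M^{p,q}_{(\omega)}$ one has $f_n\to f$ in $\Sigma_1'(\rr d)$ by the embedding in Remark \ref{Rem:ModSpaces}, while the reverse triangle inequality in $L^p$ ($p\ge1$) gives $|H_{f_n,\omega,p}(\xi)-H_{f,\omega,p}(\xi)|\le\nm{V_\phi(f_n-f)(\cdot,\xi)\omega(\cdot,\xi)}{L^p(\rr d)}$, so that $\nm{H_{f_n,\omega,p}-H_{f,\omega,p}}{L^q}\le\nm{f_n-f}{M^{p,q}_{(\omega)}}\to0$. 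The norm density itself I would obtain by truncating the short-time Fourier transform: with $\phi\in\Sigma_1(\rr d)$, $\nm\phi{L^2}=1$, and $\chi_R\in C_0^\infty(\rr{2d})$ a cut-off equal to $1$ on $\{|X|\le R\}$, set $f_R=V_\phi^*(\chi_R V_\phi f)$; then $V_\phi f_R=P_\phi(\chi_R V_\phi f)$ and $V_\phi f=P_\phi(V_\phi f)$, so Proposition \ref{Prop:ProjMapModCont} and Remark \ref{Rem:WienerSpaces} yield $\nm{f_R-f}{M^{p,q}_{(\omega)}}\lesssim\nm{(1-\chi_R)V_\phi f}{\sfW(\omega,\ell^{p,q})}\to0$ by dominated convergence ($p,q<\infty$), while $f_R\in\Sigma_1(\rr d)$ because it is a superposition over the compact set $\supp\chi_R$ of the time-frequency shifts $e^{i\scal y\xi}\phi(y-x)$ of $\phi\in\Sigma_1(\rr d)$, so that $f_R$ and $\widehat{f_R}$ decay exponentially of every order and Proposition \ref{Prop:GSexpcond} applies.

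For the remaining case $p=\infty$, $q<\infty$, sharp truncation no longer converges narrowly, and instead I would use a smooth two-parameter regularization. Fix $\zeta,\gamma\in\Sigma_1(\rr d)$ with $\zeta(0)=1$ and $\gamma$ normalized so that $f*\gamma_n\to f$ in $\Sigma_1'$, where $\gamma_n(x)=n^d\gamma(nx)$, and set
$$
f_n=\zeta(\cdot/n)\cdot(f*\gamma_n).
$$
The convolution $f*\gamma_n$ localizes $f$ in frequency, so that $\widehat{f*\gamma_n}=(2\pi)^{d/2}\widehat\gamma(\cdot/n)\widehat f$ decays exponentially of every order in $\xi$ (using $\gamma\in\Sigma_1$ and the at most exponential growth of $\widehat f$), and the factor $\zeta(\cdot/n)$ then forces exponential decay of every order of $f_n$ in $x$ and, through $\widehat{f_n}=(2\pi)^{-d/2}\widehat{\zeta(\cdot/n)}*\widehat{f*\gamma_n}$, of $\widehat{f_n}$ as well; by Proposition \ref{Prop:GSexpcond}, $f_n\in\Sigma_1(\rr d)$. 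That $f_n\to f$ in $\Sigma_1'(\rr d)$ follows from $(f_n,\varphi)=(V_\phi f_n,V_\phi\varphi)_{L^2(\rr{2d})}$ together with dominated convergence, for every $\varphi\in\Sigma_1(\rr d)$.

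The crux, and the step I expect to be the main obstacle, is the convergence $H_{f_n,\omega,\infty}\to H_{f,\omega,\infty}$ in $L^q(\rr d)$. On the short-time Fourier transform side, multiplication by $\zeta(\cdot/n)$ amounts to convolving the $\xi$-variable of $V_\phi f(x,\cdot)$ with an approximate identity $\Phi_n$ of total mass one, and convolution with $\gamma_n$ to a frequency localization of $V_\phi f(x,\cdot)$ controlled by the same type of kernel; combining this with the pointwise convergence of the (continuous) short-time Fourier transforms and the moderateness estimate $\omega(x,\xi)\le\omega(x,\xi-\eta)v(\eta)$, where $v$ is a submultiplicative weight dominating $\omega$, one obtains
$$
H_{f,\omega,\infty}(\xi)\le\liminf_{n\to\infty}H_{f_n,\omega,\infty}(\xi)
\qquad\text{and}\qquad
H_{f_n,\omega,\infty}(\xi)\lesssim\big((\Phi_n v)*H_{f,\omega,\infty}\big)(\xi),
$$
the first from $\sup_x\liminf_n\le\liminf_n\sup_x$ and the second from the approximate-identity estimate. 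Since $\Phi_n v\to v(0)\delta_0$ in $L^1$ and $H_{f,\omega,\infty}\in L^q$, the right-hand side converges in $L^q$, hence $\{H_{f_n,\omega,\infty}^{\,q}\}_n$ is uniformly integrable; together with the a.e.\ convergence $H_{f_n,\omega,\infty}\to H_{f,\omega,\infty}$ (extracted from the two displayed estimates and smoothness of $V_\phi f$), Vitali's convergence theorem gives convergence of the $L^q$-norms, and a.e.\ convergence plus convergence of norms yields the $L^q$-convergence. The delicate point is to make the upper estimate \emph{sharp} in the limit, since the naive bound loses a factor from $v$; here one is forced to take $v$ to be the optimal moderating weight $v(Y)=\sup_X\omega(X+Y)/\omega(X)$, for which $v(0)=1$, and to argue carefully around its possible discontinuity at the origin, and one must also check that the frequency regularization by $\gamma_n$ does not spoil these estimates. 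This is exactly the content of \cite[Theorem 4.17]{Toft10}.
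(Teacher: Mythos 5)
The paper offers no argument for this proposition at all: it is stated as a special case of \cite[Theorem 4.17]{Toft10} and the proof is explicitly omitted, so the relevant comparison is with that reference. Your treatment of the range $p,q\in [1,\infty )$ is correct and standard: norm density via the truncation $f_R=V_\phi ^*(\chi _RV_\phi f)$ (with $f_R\in \Sigma _1(\rr d)$ by the superexponential decay of $f_R$ and $\widehat {f_R}$, and $\nm {f_R-f}{M^{p,q}_{(\omega )}}\to 0$ by boundedness of $P_\phi$ and dominated convergence), plus the observation that norm convergence implies narrow convergence through the reverse triangle inequality in $L^p$, $p\ge 1$, and the embedding into $\Sigma _1'(\rr d)$. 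But this is not where the proposition has content.

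The essential case is $p=\infty$, $q<\infty$, and there your argument has a genuine gap, which you partly acknowledge. The two displayed estimates do not yield the a.e.\ convergence $H_{f_n,\omega ,\infty}\to H_{f,\omega ,\infty}$ that you feed into Vitali's theorem: the first only gives $H_{f,\omega ,\infty}\le \liminf _n H_{f_n,\omega ,\infty}$, while the second gives $\limsup _n H_{f_n,\omega ,\infty}\le C\, H_{f,\omega ,\infty}$ with a constant that is in general strictly larger than $1$. Indeed, the kernel in the domination is $|\widehat {\zeta}_n|$ rather than $\widehat \zeta _n$, whose mass exceeds $(2\pi )^{d/2}|\zeta (0)|$ unless $\widehat \zeta \ge 0$; the smoothing by $\gamma _n$ produces a further convolution (in the $x$-variable of $V_\phi f$) with the same kind of loss; and even with the optimal moderating weight $v(Y)=\sup _X \omega (X+Y)/\omega (X)$, for which $v(0)=1$, the weight $v$ may be discontinuous at the origin (e.g.\ $\omega (x)=2^{\lfloor x\rfloor}$ has $v(0+)=2$), so the limit of the upper bound need not collapse to $H_{f,\omega ,\infty}$. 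Uniform integrability obtained from the domination is not enough without the a.e.\ convergence of the suprema, so neither the pointwise nor the $L^q$ convergence of $H_{f_n,\omega ,\infty}$ is actually established by the sketch. You point at exactly this difficulty and then resolve it by invoking \cite[Theorem 4.17]{Toft10} --- which is precisely what the paper does for the entire proposition. As a self-contained proof the proposal is therefore incomplete at the one point where the statement is nontrivial; closing it requires the sharper construction and estimates of the suprema carried out in \cite{Toft10} (for instance an approximating sequence designed so that $H_{f_n,\omega ,p}\le H_{f,\omega ,p}$ with pointwise convergence), not the generic approximate-identity bound.
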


\par

We also need the following generalization of Lebesgue's theorem,
which follows by a straight-forward application of Fatou's lemma.

\par

\begin{lemma}\label{Lemma:GenLeb}
Let $\mu$ be a positive measure on a measurable set $\Omega$,
$\{ f_j \}_{j=1}^\infty$ and $\{ g_j \}_{j=1}^\infty$ be sequences in
$L^1(d\mu )$ such that $f_j\to f$ a.{\,}e., $g_j\to g$ in $L^1(d\mu )$
as $j$ tends to infinity, and that $|f_j|\le g_j$ for every $j\in \mathbf N$.
Then $f_j\to f$ in in $L^1(d\mu )$ as $j$ tends to infinity.
\end{lemma}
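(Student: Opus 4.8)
The plan is to reduce the statement to Fatou's lemma by means of the standard subsequence criterion for convergence in a metric space: a sequence in $L^1(d\mu)$ converges to a limit if and only if each of its subsequences has a further subsequence converging to that same limit. Hence it suffices to show that every subsequence $\{ f_{j_k} \}$ of $\{ f_j \}$ possesses a further subsequence converging to $f$ in $L^1(d\mu)$.

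First I would fix such a subsequence $\{ f_{j_k} \}$. Since $g_j\to g$ in $L^1(d\mu)$, so does the corresponding subsequence $\{ g_{j_k} \}$, and hence, after passing once more to a subsequence which I relabel, I may assume in addition that $g_{j_k}\to g$ $\mu$-a.{\,}e., while still $f_{j_k}\to f$ $\mu$-a.{\,}e. (Here $g\in L^1(d\mu)$ as an $L^1$-limit, and letting $k\to\infty$ in $|f_{j_k}|\le g_{j_k}$ gives $|f|\le g$ $\mu$-a.{\,}e., so in particular $f\in L^1(d\mu)$.) Then I would set
\[
h_k=g_{j_k}+g-|f_{j_k}-f| ,
\]
which is non-negative by the triangle inequality together with $|f_{j_k}|\le g_{j_k}$ and $|f|\le g$, and which satisfies $h_k\to 2g$ $\mu$-a.{\,}e. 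Applying Fatou's lemma to $\{ h_k \}$ and using that $\int_\Omega g_{j_k}\,d\mu\to\int_\Omega g\,d\mu$ yields
\[
\int_\Omega 2g\,d\mu\le\liminf_{k\to\infty}\int_\Omega h_k\,d\mu=\int_\Omega 2g\,d\mu-\limsup_{k\to\infty}\int_\Omega|f_{j_k}-f|\,d\mu .
\]
Since $\int_\Omega 2g\,d\mu<\infty$, this forces $\limsup_{k\to\infty}\int_\Omega|f_{j_k}-f|\,d\mu\le 0$, i.{\,}e. $f_{j_k}\to f$ in $L^1(d\mu)$, which is exactly what was needed.

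The only genuine subtlety is that $g_j$ is assumed to converge merely in $L^1(d\mu)$ rather than pointwise; this is precisely what forces the passage to a subsequence before Fatou's lemma can be applied (in the more familiar form of the generalized dominated convergence theorem one assumes $g_j\to g$ almost everywhere from the start). Apart from this reduction, every step is routine.
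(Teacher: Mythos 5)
Your proof is correct, and it follows the route the paper itself indicates: the paper offers no written proof, stating only that the lemma ``follows by a straight-forward application of Fatou's lemma,'' which is exactly the core of your argument (Fatou applied to $g_{j_k}+g-|f_{j_k}-f|\ge 0$). Your preliminary reduction via the subsequence criterion, extracting a further subsequence along which $g_{j_k}\to g$ $\mu$-a.e., is precisely the step needed to make that application legitimate when $g_j\to g$ only in $L^1(d\mu)$, so the proposal is a complete and faithful implementation of the paper's intended argument.
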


\par

\begin{rem}\label{Rem:NarrowConv}
The narrow convergence is especially interesting when $p=\infty$. 
Let $\omega \in \mascP _E(\rr {2d})$, $q\in [1,\infty )$, $\phi \in \Sigma _1(\rr d)$
and $f\in M^{\infty ,q}_{(\omega )}(\rr d)$,
$f_j\in \Sigma _1(\rr d)$ converges to $f$ narrowly
as $j\to \infty$, and let $H_{f,\omega ,\infty}$ be the same as in
Definition \ref{Def:NarrowConv}. Then we may choose these $f_j$ such that
\begin{equation}\label{Eq:RemNarrowConv}
\begin{gathered}
\lim _{j\to \infty}V_\phi f_j(x,\xi )= V_\phi f(x,\xi ),
\quad
|V_\phi f(x,\xi )\omega (x,\xi )| \le H_{f,\omega ,\infty}(\xi )
\\[1ex]
\text{and}\quad
\lim _{j\to \infty}
\nm {H_{f_j,\omega ,\infty} - H_{f,\omega ,\infty}}{L^q}= 0
\end{gathered}
\end{equation}
(See \cite[Theorem 4.17]{Toft10} and its proof.)
It is then possible to apply
Lemma \ref{Lemma:GenLeb} in integral expressions containing
$V_\phi f_j(x,\xi )$ and $V_\phi f(x,\xi )$ and perform suitable
limit processes.
\end{rem}

\par

\begin{proof}[Proof of Theorem \ref{Thm:MultConvMod1}]
Since (2) is the Fourier transform of (1), it suffices to prove (1).

\par

The existence of the extension follows from Theorem \ref{Thm:MultMod1}.
Since $M^{p,q}_{(\omega )}(\rr d)$ increases with $p$ and $q$, we may
assume that equality is attained in \eqref{Eq:LebExpHolderYoung1}
and that $p_0=\cdots p_N=\infty$. By replacing $q_j$ with
$$
r_j=\max (1,q_j),
$$
it follows from \eqref{Eq:LebExpHolderYoung1} that
for $r_0=\frac {q_0}q\ge 1$ and some $r_0\ge 1$,
$$
\frac 1{r_0}\le \sum _{j=1}^N \frac 1{r_j} -N+1,
$$
and that
$$
M^{\infty ,q_j}_{(\omega _j)}(\rr d) \subseteq M^{\infty ,r_j}_{(\omega _j)}(\rr d).
$$
Suppose $g_1,g_2\in M^{\infty ,q_j}_{(\omega _j)}(\rr d)$ are such that
$g_1$ equals $g_2$ as elements in $M^{\infty ,r_j}_{(\omega _j)}(\rr d)$. Then
$g_1$ is also equal to $g_2$ as elements in $M^{\infty ,q_j}_{(\omega _j)}(\rr d)$.
Hence it suffices to prove the uniqueness of the product
$f_1\cdots f_N \in M^{\infty ,q_0}_{(\omega _0)}(\rr d)$ 
of $f_j\in M^{\infty ,q_j}_{(\omega _j)}(\rr d)$, $j=1,\dots ,N$,
when additionally $q_j\ge 1$, i.{\,}e.,
\begin{equation}\label{Eq:Youngq}
\frac 1{q_1}+\cdots +\frac 1{q_N} = N-1+\frac 1{q_0},\qquad q_0,\dots ,q_N\in [1,\infty ].
\end{equation}
In particular, all involved modulation spaces are Banach spaces.

\par

Let $j_0\in \{ 1,\dots ,N\}$ be chosen such that $q_j\le q_{j_0}$ for every
$j\in \{ 1,\dots ,N\}$. Then $j<\infty$ when $j\neq j_0$.

\par

The product $f_1\cdots f_N$ is uniquely defined and can be obtained through
\eqref{Eq:MultDefFormula1} for every $\fy \in \Sigma _1(\rr d)$ when
$f_j\in \Sigma _1(\rr d)$ and $f_{j_0}\in M^{\infty ,q_{j_0}}_{(\omega _{j_0})}(\rr d)$,
$j\neq j_0$. For general $f_j\in M^{\infty ,q_{j}}_{(\omega _{j})}(\rr d)$, choose
$f_{j,k}\in \Sigma _1(\rr d)$, $k=1,2,\dots $ such that $f_{j,k}$ converges to
$f_j$ narrowly as $k$ tends to infinity, and that \eqref{Eq:RemNarrowConv}
holds with $f_j$ and $f_{j,k}$ in place of $f$ and $f_j$, respectively. Then
it follows by replacing $f_j$ by $f_{j,k}$ when $j\neq j_0$ in \eqref{Eq:MultDefFormula1}
and applying Lemma \ref{Lemma:GenLeb} on the integral in \eqref{Eq:MultDefFormula1}
that
$$
\ell (\fy )\equiv \lim _{k\to \infty}(f_{1,k}\cdots f_{N,k},\fy )
$$
exists and defines an element in $f\in \Sigma _1'(\rr d)$. This shows that
the only possibility to define $f_1\cdots f_N$ in a continuous way is to put
$f_1\cdots f_N=f$, and the asserted uniqueness follows.
\end{proof}

\par

\section{Extensions and variations}\label{sec4}

\par

In this section we extend the results on step and Fourier step multipliers
to certain so-called curve step and Fourier curve step multipliers.
That is a generalized form of of step and Fourier step multipliers,
where the constants $a_0(j)$ in the definition of $M_{b,a_0}$ and
$M_{\mascF \! ,b,a_0}$ are replaced by
certain non-constant functions or even distributions.
In the end we are able to generalize Theorems \ref{Thm:Mainthm1}
and \ref{Thm:Mainthm2} to such multipliers. These achievements
are based on H{\"o}lder-Young relations
for multiplications and convolutions in Section \ref{sec3}. In the case
of trivial weights and all modulation spaces are Banach spaces,
our results are similar to \cite[Theorem 6]{BenGraGroOko} and
\cite[Proposition 4.12]{RSTT}.

\par

The multipliers and Fourier multipliers which we consider are given in
the following.

\par

\begin{defn}\label{Def:SlopeStepMult}
Let  $b\in \rr d_+$ be fixed, $\Lambda _b$ and $Q_b$ be given by
\eqref{Eq:LatticebDef} and \eqref{Eq:QubebDef},
and let
\begin{equation}\label{Eq:DefFuncSeq}
a_0 \equiv \{ a_0(j,\cdo )\} _{j\in \Lambda _b}\subseteq C^\infty (\rr d)
\end{equation}
be such that
$$
\Big (\sum _{j\in \Lambda _b}a_0(j,\cdo )\chi _{j+Q_b}\Big )
\in \Sigma _1'(\rr d).
$$
Then the multiplier
\begin{equation}\tag*{(\ref{Eq:DefStepMult})$'$}
M_{b,a_0}\, :\, f\mapsto \sum _{j\in \Lambda _b}a_0(j,\cdo )\chi _{j+Q_b}f,
\end{equation}
from $C^\infty (\rr d)$ to $L^\infty _{loc}(\rr d)$ is called
\emph{slope step multiplier} with respect to $b$ and $a_0$.
The Fourier multiplier
\begin{equation}\tag*{(\ref{Eq:DefStepFourMult})$'$}
M_{\mascF \! ,b,a_0} \equiv \mascF ^{-1}\circ M_{b,a_0} \circ \mascF ,
\end{equation}
is called \emph{slope step Fourier multiplier} with respect to $b$ and $a_0$.
\end{defn}

\par

First we perform some studies of
\begin{equation}\label{Eq:DefFuncFromSeq}
T_\psi a_0 \equiv \sum _{j\in \Lambda _b}a_0(j,\cdo )\psi (\cdo -j),
\end{equation}
where $\psi \in \mascS (\rr d)$ is suitable. The conditions on the sequence
\eqref{Eq:DefFuncSeq} that we have in mind are that for fixed
$\omega _0\in \mascP _E(\rr d)$ and $p\in (0,\infty ]$, the functions
\begin{equation}\label{Eq:ConsFuncSeq1}
\fkb _{a_0,\alpha} (x)
\equiv
\sup _{\beta \le \alpha}\left (\sup _{j\in \Lambda _b}
\left |(\partial _x^\beta a_0)(j,x)\right | \right )
\end{equation}
should belong to $L^p(\rr d)$ for every $\alpha \in \nn d$, or that
for some or for every $h>0$, the function
\begin{equation}\label{Eq:ConsFuncSeq2}
\fkb _{a_0,h} (x)
\equiv
\sup _{\alpha \in \nn d}\left (\sup _{j\in \Lambda _b}
\left |\frac {(\partial _x^\alpha a_0)(j,x)}{h^{|\alpha |}\alpha !^\sigma}
\right |\right )
\end{equation}
should belong to $L^p(\rr d)$.

\par

\begin{prop}\label{Prop:FuncFromSeq}
Let $b\in \rr d_+$ be fixed, $\Lambda _b$ be given by
\eqref{Eq:LatticebDef}, $s,h_0,\sigma >0$,
\eqref{Eq:DefFuncSeq}
be a sequence of functions on $C^\infty (\rr d)$, $\psi \in \mascS (\rr d)$,
and let $T_\psi a_0$, $\fkb _{a_0,\alpha}$ and $\fkb _{a_0,h}$
be given by \eqref{Eq:DefFuncFromSeq}--\eqref{Eq:ConsFuncSeq2}
when $\alpha \in \nn d$ and $h>0$. Then the following is true:
\begin{enumerate}
\item if $\fkb _{a_0,\alpha} \in L^\infty _{loc}(\rr d)$ for $\alpha =(0,\dots ,0)\in \nn d$,
then the series in \eqref{Eq:DefFuncFromSeq} is locally uniformly convergent
and defines an element in $C(\rr d)$;

\vrum

\item if $\fkb _{a_0,\alpha} \in L^\infty _{loc}(\rr d)$ for every $\alpha \in \nn d$, then
$T_\psi a_0\in C^\infty (\rr d)$ and
$$
|(\partial ^\alpha T_\psi a_0)(x)| \lesssim \fkb _{a_0,\alpha} (x),\qquad x\in \rr d,
$$
for every $\alpha \in \nn d$;

\vrum

\item if in addition $\psi \in \maclS _s^\sigma (\rr d)$ and
$\fkb _{a_0,h_0}\in L^\infty _{loc}(\rr d)$,
then
$$
|(\partial ^\alpha T_\psi a_0)(x)| \lesssim h^\alpha \alpha !^\sigma \fkb _{a_0,h_0}(x),
\qquad x\in \rr d,
$$
for some $h>0$;

\vrum

\item  if in addition $\psi \in \Sigma _s^\sigma (\rr d)$, $c>1$ and
$\fkb _{a_0,h_0}\in L^\infty _{loc}(\rr d)$,
then
$$
|(\partial ^\alpha T_\psi a_0)(x)|
\lesssim (ch_0)^\alpha \alpha !^\sigma \fkb _{a_0,h_0}(x), \qquad x\in \rr d.
$$
\end{enumerate}
\end{prop}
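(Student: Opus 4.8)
The plan is to verify all four assertions by a single mechanism: differentiating the series in \eqref{Eq:DefFuncFromSeq} term by term and controlling the resulting sum by the fact that $\psi$ has good decay and, in parts (3)--(4), good derivative bounds, so that only finitely many terms contribute near each point and the bad factors $\fkb _{a_0,\alpha}$ or $\fkb _{a_0,h_0}$ factor out.

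First I would fix a point $x\in\rr d$ and note that since $\psi\in\mascS(\rr d)$, for any $M>0$ we have $|\psi(x-j)|\lesssim \eabs{x-j}^{-M}$, and these decay estimates persist for all derivatives $\partial^\gamma\psi$ in parts (1)--(2). Hence for part (1), the terms $a_0(j,x)\psi(x-j)$ are bounded by $\fkb_{a_0,0}(x)\,\eabs{x-j}^{-M}$, which is summable over $j\in\Lambda_b$ once $M>d$; local uniform convergence follows because $\fkb_{a_0,0}\in L^\infty_{loc}$, giving continuity of $T_\psi a_0$. For part (2), apply Leibniz's rule to each term: $\partial^\alpha\big(a_0(j,\cdot)\psi(\cdot-j)\big)(x)=\sum_{\beta\le\alpha}\binom{\alpha}{\beta}(\partial^\beta_x a_0)(j,x)(\partial^{\alpha-\beta}\psi)(x-j)$, bound $|(\partial^\beta_x a_0)(j,x)|\le\fkb_{a_0,\alpha}(x)$ for all $\beta\le\alpha$, and use the decay of $\partial^{\alpha-\beta}\psi$ to sum in $j$; since the differentiated series converges locally uniformly, $T_\psi a_0\in C^\infty$ and the stated bound holds (the implicit constant absorbing $\sum_{\beta\le\alpha}\binom{\alpha}{\beta}\sum_j\eabs{x-j}^{-M}$, which is a finite quantity depending only on $\alpha$).

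For parts (3) and (4) the extra ingredient is the Gelfand--Shilov derivative control on $\psi$: $|(\partial^\gamma\psi)(y)|\lesssim h_1^{|\gamma|}\gamma!^\sigma$ for some $h_1>0$ (Roumieu case, part (3)) or for every $h_1>0$ (Beurling case, part (4)), combined in part (4) with the observation that smaller $h_1$ can be traded against the fixed constant $c>1$. Applying Leibniz's rule again and now bounding $|(\partial^\beta_x a_0)(j,x)|\le h_0^{|\beta|}\beta!^\sigma\fkb_{a_0,h_0}(x)$ together with $|(\partial^{\alpha-\beta}\psi)(x-j)|\lesssim h_1^{|\alpha-\beta|}(\alpha-\beta)!^\sigma$, the combinatorial sum $\sum_{\beta\le\alpha}\binom{\alpha}{\beta}h_0^{|\beta|}\beta!^\sigma h_1^{|\alpha-\beta|}(\alpha-\beta)!^\sigma$ is estimated by $\max(h_0,h_1)^{|\alpha|}\alpha!^\sigma$ times a harmless $2^{|\alpha|}$-type factor, using $\binom{\alpha}{\beta}\beta!^\sigma(\alpha-\beta)!^\sigma\le\alpha!^\sigma$ when $\sigma\ge 1$ and a standard multinomial bound more generally; in part (4) one picks $h_1$ small enough that $\max(h_0,h_1)$ and the extra factors are all dominated by $ch_0$. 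Summing over $j$ with the Schwartz decay of $\psi$ (which coexists with the derivative bounds since $\psi\in\maclS_s^\sigma$ also has exponential decay by Proposition \ref{Prop:GSexpcond}) then yields the asserted pointwise estimates.

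The main obstacle I anticipate is purely bookkeeping: organizing the combinatorial factorial estimates in parts (3) and (4) so that the constant in front of $\fkb_{a_0,h_0}$ comes out as $h^\alpha\alpha!^\sigma$ (some $h$) respectively $(ch_0)^\alpha\alpha!^\sigma$ (the prescribed $c>1$), while simultaneously keeping the $j$-summation uniform in $\alpha$. The resolution is that the $j$-sum contributes only a constant independent of $\alpha$ (by choosing the decay exponent $M>d$ once and for all, independently of $\alpha$, which is legitimate since $\psi$ is Schwartz), so the $\alpha$-dependence lives entirely in the Leibniz combinatorics, where the inequality $\beta!^\sigma(\alpha-\beta)!^\sigma\binom{\alpha}{\beta}\le C^{|\alpha|}\alpha!^\sigma$ handles everything cleanly.
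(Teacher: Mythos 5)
Your overall strategy (termwise differentiation of \eqref{Eq:DefFuncFromSeq}, Leibniz' rule, the Gevrey bounds on $\partial ^\gamma \psi$ combined with its decay via Proposition \ref{Prop:GSexpcond}, and summation over the lattice) is exactly the one used in the paper, and it does carry parts (1)--(3): in (3) the conclusion only asks for \emph{some} $h>0$, so losing factors like $2^{|\alpha |}$ is harmless. The genuine problem is the constant bookkeeping in part (4), where the conclusion must hold with the prescribed base $ch_0$ for an \emph{arbitrary} $c>1$. Your estimate of the Leibniz sum by ``$\max (h_0,h_1)^{|\alpha |}\alpha !^\sigma$ times a harmless $2^{|\alpha |}$-type factor'' cannot deliver this: since $\max (h_0,h_1)\ge h_0$, no choice of small $h_1$ helps, and the extra $2^{|\alpha |}$ forces a base of at least $2h_0$, which exceeds $ch_0$ whenever $1<c<2$. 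The same defect sits in your closing inequality $\binom{\alpha}{\beta}\beta !^\sigma (\alpha -\beta )!^\sigma \le C^{|\alpha |}\alpha !^\sigma$: one has $\binom{\alpha}{\beta}\beta !^\sigma (\alpha -\beta )!^\sigma =\alpha !^\sigma \binom{\alpha}{\beta}^{1-\sigma}$, so for $\sigma <1$ you are forced to take $C>1$ bounded away from $1$, and again a factor $C^{|\alpha |}$ cannot be absorbed into $(ch_0)^{|\alpha |}$ when $c$ is close to $1$. So the step ``pick $h_1$ small enough that $\max (h_0,h_1)$ and the extra factors are all dominated by $ch_0$'' fails as stated.

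The repair is small and is precisely the paper's computation: do \emph{not} feed the binomial coefficient into the factorial estimate. Use only $\beta !^\sigma (\alpha -\beta )!^\sigma \le \alpha !^\sigma$, which holds for every $\sigma >0$ because $\beta !\,(\alpha -\beta )!\le \alpha !$, and keep the binomial coefficient to resum exactly,
\begin{equation*}
\sum _{\beta \le \alpha}\binom{\alpha}{\beta}h_0^{|\beta |}\,\ep ^{|\alpha -\beta |}
=(h_0+\ep )^{|\alpha |},
\end{equation*}
where the Beurling hypothesis $\psi \in \Sigma _s^\sigma (\rr d)$ (through Proposition \ref{Prop:GSexpcond}) gives $|\partial ^\gamma \psi (y)|\lesssim \ep ^{|\gamma |}\gamma !^\sigma e^{-r|y|^{1/s}}$ for \emph{every} $\ep >0$, with the exponential factor taking care of the $j$-summation uniformly in $\alpha$. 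Choosing $\ep \le (c-1)h_0$ then yields $(h_0+\ep )^{|\alpha |}\le (ch_0)^{|\alpha |}$ with no parasitic exponential loss; in part (3) the Roumieu case only provides some fixed admissible $\ep$, which is exactly why the conclusion there reads ``for some $h>0$''.
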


\par

\begin{proof}
We only prove (1) and (4). The other assertions follow by similar arguments and
are left for the reader.

\par

Let $\Lambda =\Lambda _b$, $\alpha =(0,\dots ,0)\in \nn d$ and suppose that
$\fkb _{a_0,\alpha} \in L^\infty _{loc}(\rr d)$. We have
$$
\sum _{j\in \Lambda}|a_0(j,x)\psi (x-j)|
\le
\fkb _{a_0,\alpha} (x) \sum _{j\in \Lambda}|\psi (x-j)|
\asymp
\fkb _{a_0,\alpha} (x),
$$
which shows that \eqref{Eq:DefFuncFromSeq} is locally uniformly convergent.
Since $a_0(j,\cdo )$ and $\psi (\cdo -j)$ are continuous functions, it follows that
$T_\psi a_0$ in \eqref{Eq:DefFuncFromSeq} is continuous.

\par

Next suppose additionally that $\psi \in \Sigma _s^\sigma (\rr d)$ and
consider $f=T_\psi a_0$. For every
$\alpha \in \nn d$, $\ep >0$ and $r>0$, we have
\begin{multline*}
|(\partial ^\alpha f)(x)|
\le
\sum _{j\in \Lambda}\sum _{\gamma \le \alpha}{{\alpha}\choose {\gamma}}
|\partial ^{\alpha -\gamma}a_0(j,x)||\partial ^\gamma \psi (x-j)|
\\[1ex]
\lesssim
\fkb _{a_0,h_0}(x)
\sum _{j\in \Lambda}\sum _{\gamma \le \alpha}{{\alpha}\choose {\gamma}}
h_0^{|\alpha -\gamma|}(\alpha -\gamma)!^\sigma \ep ^{|\gamma|}\gamma !^\sigma
e^{-r|x-j|^{\frac 1\sigma}}
\\[1ex]
\le
(h_0+\ep )^{|\alpha |}\alpha !^\sigma \fkb _{a_0,h_0}(x)
\sum _{j\in \Lambda}e^{-r|x-j|^{\frac 1\sigma}}
\asymp
(h_0+\ep )^{|\alpha |}\alpha !^\sigma \fkb _{a_0,h_0}(x),
\end{multline*}
and the result follows.
\end{proof}

\par

In the next result we show that if $\fkb _{a_0,\alpha}$ or $\fkb _{a_0,h}$ in the
previous proposition belong to $\sfW ^1(\omega _0,\ell ^{p})$,
then for $T_\psi a_0$ in \eqref{Eq:DefFuncFromSeq} we have
\begin{equation}\label{Eq:FunctionInModspace}
T_\psi a_0\in M^{p,q}_{(\omega _r)}(\rr d)\bigcap W^{p,q}_{(\omega _r)}(\rr d),
\end{equation}
and
\begin{align}
\nm {T_\psi a_0}{M^{p,q}_{(\omega _r)}}
+
\nm {T_\psi a_0}{W^{p,q}_{(\omega _r)}}
&\lesssim
\max _{|\alpha |\le N} \nm {\fkb _{a_0,\alpha}}{\sfW ^1(\omega _0,\ell ^{p})}.
\label{Eq:SeqWienerEst1}
\intertext{or}
\nm {T_\psi a_0}{M^{p,q}_{(\omega _r)}}
+
\nm {T_\psi a_0}{W^{p,q}_{(\omega _r)}}
&\lesssim
\nm {\fkb _{a_0,h}}{\sfW ^1(\omega _0,\ell ^{p})}
\label{Eq:SeqWienerEst2}
\end{align}
See also Remark \ref{Rem:WienerSpaces}
for notations.

\par

\begin{prop}\label{Prop:HomeSymbSlopeMult}
Let $\omega _0\in \mascP _E(\rr d)$, $\Lambda \subseteq \rr d$,
$s$, $h_0$, $\sigma >0$, $T_\psi a_0$, $\fkb _{a_0,\alpha}$,
$\fkb _{a_0,h}$ and $\psi$ be the same as in Proposition \ref{Prop:FuncFromSeq},
and let $p,q\in (0,\infty ]$.
Then the following is true:
\begin{enumerate}
\item if in addition $\omega _0\in \mascP (\rr d)$,
$\fkb _{a_0,\alpha} \in \sfW ^1(\omega _0,\ell ^{p})$
for every $\alpha \in \nn d$,
and $\vartheta _r(x,\xi )= \omega _0(x)\eabs \xi ^r$ when $r\ge 0$, then
\eqref{Eq:FunctionInModspace}
and \eqref{Eq:SeqWienerEst1} hold;

\vrum

\item if in addition $\omega _0\in \mascP _{E,s}^0(\rr d)$,
$\psi \in \maclS _s^\sigma (\rr d)$,
$\fkb _{a_0,h} \in \sfW ^1(\omega _0,\ell ^{p})$
for some $h>0$,
and $\vartheta _r(x,\xi )= \omega _0(x) e^{r|\xi |^{\frac 1\sigma}}$
when $r\ge 0$, then \eqref{Eq:FunctionInModspace} and
\eqref{Eq:SeqWienerEst2} hold for some $r>0$;

\vrum

\item if in addition $\omega _0\in \mascP _{E,s}(\rr d)$,
$\psi \in \Sigma _s^\sigma (\rr d)$,
$\fkb _{a_0,h} \in \sfW ^1(\omega _0,\ell ^{p})$
for every $h>0$,
and $\vartheta _r(x,\xi )= \omega _0(x) e^{r|\xi |^{\frac 1\sigma}}$
when $r\ge 0$, then
\eqref{Eq:FunctionInModspace} and \eqref{Eq:SeqWienerEst2}
hold for every $r>0$.
\end{enumerate}
\end{prop}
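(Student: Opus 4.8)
The plan is to reduce everything to a pointwise bound on the short-time Fourier transform of $f:=T_\psi a_0$ (see \eqref{Eq:DefFuncFromSeq}) which splits as a product of a fixed decaying function of the frequency variable and a convolution in the space variable, and then to feed this into a Wiener-amalgam convolution estimate. By Proposition \ref{Prop:FuncFromSeq}, in each of the cases (1)--(3) the hypotheses give $f\in C^\infty(\rr d)\subseteq \Sigma _1'(\rr d)$ together with derivative estimates $|(\partial ^\gamma f)(x)|\lesssim B_\gamma \,\fkb (x)$ for all $\gamma$, where in case (1) one may take $\fkb =\fkb _{a_0,\beta }$ for a multi-index $\beta $ of sufficiently large length (using that $\fkb _{a_0,\alpha }$ increases with $\alpha $) and $B_\gamma $ a constant, while in cases (2)--(3) $\fkb =\fkb _{a_0,h}$ and $B_\gamma =C^{|\gamma |}\gamma !^\sigma $ with $C$ fixed in (2) and arbitrarily small in (3). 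By \eqref{Eq:ModWienerConn1}--\eqref{Eq:ModWienerConn2} in Remark \ref{Rem:WienerSpaces} it then suffices to bound $\nm {V_\phi f}{\sfW (\vartheta _r,\ell ^{p,q})}$ and $\nm {V_\phi f}{\sfW (\vartheta _r,\ell ^{p,q}_*)}$ for one window $\phi $; I would take $\phi $ in a Gelfand--Shilov class whose elements have Gevrey order $\sigma $ (so that $|(\partial ^\delta \phi )(z)|\lesssim h_\phi ^{|\delta |}\delta !^\sigma e^{-\rho |z|}$ for a $\rho $ that can be prescribed larger than the exponential moderation rate of $\omega _0$), which in the case $\sigma \ge 1$ may be $\phi \in \Sigma _1(\rr d)\setminus 0$; such windows are admissible for the weight classes in question by window independence (cf. Proposition \ref{Prop:GaborExpMod} and Remark \ref{Rem:WienerSpaces}).

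Next I would estimate $V_\phi f$ by writing $V_\phi f(x,\xi )=\mascF \big (f\cdot \overline {\phi (\cdo -x)}\big )(\xi )$ (cf. \eqref{Eq:STFTDef}) and integrating by parts: for every multi-index $\gamma $,
$$
|\xi ^\gamma V_\phi f(x,\xi )|\lesssim \sum _{\delta \le \gamma }\binom {\gamma }{\delta }\int _{\rr d}|(\partial ^{\gamma -\delta }f)(y)|\,|(\partial ^\delta \phi )(y-x)|\, dy .
$$
Inserting the derivative bounds for $f$ and for $\phi $, and writing $\Psi $ for a fixed even, rapidly decaying majorant of $|\partial ^\delta \phi |$ up to the factor $h_\phi ^{|\delta |}\delta !^\sigma $, this yields $|\xi ^\gamma V_\phi f(x,\xi )|\lesssim \widetilde B_\gamma \,(\fkb *\Psi )(x)$ with $\widetilde B_\gamma $ bounded in case (1) and $\widetilde B_\gamma \lesssim (C')^{|\gamma |}\gamma !^\sigma $ in cases (2)--(3) ($C'$ fixed, resp. arbitrarily small), the sum over $\delta $ being absorbed using $(\gamma -\delta )!^\sigma \delta !^\sigma \le \gamma !^\sigma $ and $\sum _\delta \binom {\gamma }{\delta }\le 2^{|\gamma |}$. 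Using $\eabs \xi ^N\lesssim \sum _{|\gamma |\le N}|\xi ^\gamma |$ in case (1), and the elementary bound $\inf _\gamma (C')^{|\gamma |}\gamma !^\sigma |\xi |^{-|\gamma |}\asymp e^{-c|\xi |^{1/\sigma }}$ in cases (2)--(3), one obtains
$$
|V_\phi f(x,\xi )|\lesssim w(\xi )\,(\fkb *\Psi )(x),
$$
where $w(\xi )=\eabs \xi ^{-N}$ for any prescribed $N$ in case (1), $w(\xi )=e^{-r'|\xi |^{1/\sigma }}$ for some $r'>0$ in case (2), and for every $r'>0$ in case (3).

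The remaining ingredient is the Wiener-amalgam convolution estimate $\nm {\fkb *\Psi }{\sfW (\omega _0,\ell ^{p})}\lesssim \nm {\fkb }{\sfW ^1(\omega _0,\ell ^{p})}$. Since $\omega _0(x)\lesssim \omega _0(y)v(x-y)$ for the submultiplicative weight $v$ moderating $\omega _0$, one has $\omega _0(x)(\fkb *\Psi )(x)\lesssim \big ((\fkb \,\omega _0)*(\Psi \,v)\big )(x)$ with $\Psi \,v$ bounded and rapidly decaying (here $\rho $ was chosen larger than the moderation rate of $\omega _0$); discretising $\Psi \,v$ over $\zz d$ and applying the discrete Young inequality in its quasi-Banach form, Proposition \ref{Prop:HolderYoungDiscrLebSpaces}(2), which is valid for all exponents in $(0,\infty ]$, then gives the claim. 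It is essential to remain within the Wiener-amalgam scale rather than pass to $L^p$, since for $p<1$ the local $L^1$-averages defining $\sfW ^1$ are not controlled by $L^p$. Finally, since the bound on $V_\phi f\cdot \vartheta _r$ factors as a fixed function of $\xi $ --- namely $\eabs \xi ^{\,r-N}$, whose sequence of local suprema lies in $\ell ^q(\zz d)$ once $N>r+d/q$, resp. $e^{(r-r')|\xi |^{1/\sigma }}$, which is integrable since $r<r'$ --- times $\omega _0(x)(\fkb *\Psi )(x)$, both mixed quasi-norms $\nm {V_\phi f}{\sfW (\vartheta _r,\ell ^{p,q})}$ and $\nm {V_\phi f}{\sfW (\vartheta _r,\ell ^{p,q}_*)}$ split into the product of the corresponding $\ell ^q$-norm of the first factor and $\nm {\fkb *\Psi }{\sfW (\omega _0,\ell ^{p})}\lesssim \nm {\fkb }{\sfW ^1(\omega _0,\ell ^{p})}$. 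Together with $f\in \Sigma _1'(\rr d)$ this gives \eqref{Eq:FunctionInModspace} and the norm estimates \eqref{Eq:SeqWienerEst1} in case (1) (after taking the maximum over the finitely many $|\alpha |\le N$ that occur) and \eqref{Eq:SeqWienerEst2} in cases (2)--(3).

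The routine parts are the integration by parts and the discrete Young argument, both standard in this setting. The points requiring care are the choice of the window $\phi $ when $\sigma <1$ --- it must have derivatives of Gevrey order $\sigma $ while still decaying fast enough to absorb the exponential moderation of $\omega _0$, and must remain admissible for window independence --- and the bookkeeping in cases (2)--(3): absorbing the Leibniz sum into a clean $(C')^{|\gamma |}\gamma !^\sigma $ bound for $\widetilde B_\gamma $ and converting $\inf _\gamma $ into the right exponential weight, with constants arranged so that (2) yields ``some $r$'' and (3) ``every $r$''. I expect this Gevrey/exponential bookkeeping to be the genuine obstacle; everything else is an application of the Wiener-amalgam machinery of Section \ref{sec1} and of Proposition \ref{Prop:FuncFromSeq}.
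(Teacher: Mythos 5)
Your argument is correct and follows essentially the same route as the paper's proof: both derive the factorized pointwise bound $|V_\phi f(x,\xi )\,\vartheta _r(x,\xi )|\lesssim e^{-c|\xi |^{1/\sigma}}\big ((\fkb \,\omega _0)*\Psi \big )(x)$ (polynomial decay in $\xi$ in case (1)) from Leibniz/Gevrey estimates together with the moderateness of $\omega _0$, and then conclude through a Wiener-amalgam convolution estimate in the $x$-variable. The remaining differences are organizational rather than substantive: you route the derivative bookkeeping through Proposition \ref{Prop:FuncFromSeq} and re-prove the amalgam convolution bound via Proposition \ref{Prop:HolderYoungDiscrLebSpaces} (2), whereas the paper keeps the sum over $\Lambda _b$ inside the short-time Fourier transform and cites an external convolution result, and you absorb the $\ell ^q$-summation by splitting the mixed quasi-norm where the paper instead uses the embedding $M^{p,\infty}_{(\vartheta _{2r})}\hookrightarrow M^{p,q}_{(\vartheta _{r})}$.
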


\par

\begin{proof}
We only prove (2). The other assertions follow by similar arguments and
are left for the reader.

\par

Let $f=T_\psi a_0$, $\psi _j=\psi (\cdo -j)$ and
$\widetilde \phi (x)= \overline{\phi (-x)}$. Since
$$
(V_\phi f)(x,\xi ) = e^{-i\scal x\xi} (V_{\widehat \phi}\widehat f)(\xi ,-x),
$$
we get
\begin{multline*}
(V_\phi f)(x,\xi )
=
e^{-i\scal x\xi}
\sum _{j\in \Lambda} (V_{\widehat \phi}\mascF (\psi _ja_0(j,\cdo )))(\xi ,-x)
\\[1ex]
=
e^{-i\scal x\xi}
\sum _{j\in \Lambda} \mascF ^{-1}
(\mascF (\psi _ja_0(j,\cdo ))\cdot
\overline {\widehat \phi (\cdo -\xi )})(x)
\\[1ex]
=
(2\pi )^{-\frac d2}e^{-i\scal x\xi}
\sum _{j\in \Lambda} 
\big ( (\psi _ja_0(j,\cdo ))*
(\widetilde \phi \cdot e^{i\scal \cdo \xi}) \big )(x).
\end{multline*}
Hence, Leibnitz rule, integrations by parts and Proposition \ref{Prop:GSexpcond}
give
\begin{multline*}
|\xi ^\alpha (V_\phi f)(x,\xi )|
\lesssim
\sum _{j\in \Lambda} 
\left | \big ( (\psi _ja_0(j,\cdo ))*
(\widetilde \phi \cdot (D_x ^\alpha e^{i\scal \cdo \xi})) \big )(x) \right |
\\[1ex]
\le
\sum
\frac {\alpha !}{\gamma _1!\gamma _2!\gamma _3!}
\big ( |(\partial ^{\gamma _1}\psi _j)(\partial ^{\gamma _2}a_0(j,\cdo ))|*
|\partial ^{\gamma _3}\widetilde \phi | \big )(x)
\\[1ex]
\lesssim
\sum
3^{|\alpha |}h^{|\gamma _1+\gamma _2+\gamma _3|}
(\gamma _1!\gamma _2!\gamma _3!)^\sigma
\big ( (e^{-r|\cdo -j|^{\frac 1s}} \fkb _{a_0,h})*e^{-r|\cdo |^{\frac 1s}}
\big )(x)
\\[1ex]
\le
(9h)^{|\alpha |}\alpha !^\sigma \sum _{j\in \Lambda}
\big ( (e^{-r|\cdo -j|^{\frac 1s}} \fkb _{a_0,h})*e^{-r|\cdo |^{\frac 1s}}
\big )(x)
\\[1ex]
\asymp
(9h)^{|\alpha |}\alpha !^\sigma
\big ( \fkb _{a_0,h}*e^{-r|\cdo |^{\frac 1s}} \big )(x).
\end{multline*}
Here the second and third sums are taken with respect to
all $j\in \Lambda$ and all $\gamma _1,\gamma _2,\gamma _3\in \nn d$
such that $\gamma _1+\gamma _2+\gamma _3=\alpha$.

\par

This implies that for some constant $C$ which is independent of $h$, $r$
and $\alpha$ we have
$$
\left (
\frac {|\xi |^{\frac 1\sigma}}{(Ch)^{\frac 1\sigma}}
\right )^k |V_\phi f(x,\xi )|^{\frac 1\sigma}
\lesssim
2^{-k} \left (\big ( \fkb _{a_0,h}*e^{-r|\cdo |^{\frac 1s}} \big )(x)\right )^{\frac 1\sigma},
$$
and by taking the sum over all $k\ge 0$ we land on
\begin{equation*}
|V_\phi f(x,\xi )e^{r_h|\xi |^{\frac 1\sigma}}|
\lesssim
\big ( \fkb _{a_0,h}*e^{-r|\cdo |^{\frac 1s}} \big )(x),
\qquad
r_h = \frac \sigma{(Ch)^{\frac 1\sigma}}.
\end{equation*}
By multiplying with $\omega _0$ and using that
$\omega _0(x+y)\lesssim \omega _0(x)e^{r|y|^{\frac 1s}}$ for every $r>0$,
we obtain 
\begin{equation}\label{Eq:EstExpSTFT}
|V_\phi f(x,\xi )\vartheta _{r_h}(x,\xi )|
\lesssim
\big ( (\fkb _{a_0,h}\omega _0)*e^{-r_0|\cdo |^{\frac 1s}} \big )(x),
\qquad
r_h = \frac \sigma{(Ch)^{\frac 1\sigma}},
\end{equation}
for some $r_0>0$. 

\par

By applying \cite[Proposition 2.5]{Toft13} on the last inequality we obtain
$$
\nm {V_\phi f}{\sfW (\vartheta _{r_h},\ell ^{p,\infty })}
+
\nm {V_\phi f}{\sfW (\vartheta _{r_h},\ell ^{p,\infty }_*)}
\lesssim
\nm {e^{-r|\cdo |^{\frac 1s}}}{\sfW (1,\ell ^{\min (1,p)})}
\nm {\fkb _{a_0,h}}{\sfW (\vartheta _{r_h},\ell ^{p})}.
$$
The result now follows for general $q\in (0,\infty ]$ from the relations
$$
\nm {V_\phi f}{\sfW (\vartheta _{r_h},\ell ^{p,\infty })}
\asymp \nm f{M^{p,\infty}},
\quad
\text{and}\quad
M^{p,\infty}_{(\vartheta _{2r})}(\rr d) \hookrightarrow
M^{p,q}_{(\vartheta _{r})}(\rr d) \hookrightarrow 
M^{p,\infty}_{(\vartheta _{r})}(\rr d),
$$
and similarly with $W^{p,q}_{(\omega )}$ and $\ell ^{p,q}_*$ spaces
in place of $M^{p,q}_{(\omega )}$ and $\ell ^{p,q}$ spaces.
%
%
%
%
\end{proof}

\par

We have now the following extension of Theorem \ref{Thm:Mainthm1}. Here
involved Lebesgue exponents and weight functions should fullfil
\begin{equation}\label{Eq:MainThmCondLebExpAgain}
\frac 1{p_2}-\frac 1{p_1}\le \frac 1p,
\quad
\frac 1{q_1}-\frac 1{q_2} \ge \max \left ( \frac 1{p_2}-1,0 \right ), 
\end{equation}
and
\begin{equation}\label{Eq:WeightCondUsual}
\omega _{0,2}(x)\lesssim \omega _{0,1}(x)\omega _0(x).
\end{equation}

\par

\begin{thm}\label{Thm:Mainthm1Ext}
Let $p,p_1,p_2\in (0,\infty ]$, $q\in (1,\infty )$, $q_1,q_2\in (\min (1,p_2),\infty )$
be such that \eqref{Eq:MainThmCondLebExpAgain} holds,
$b>0$, $\omega _0,\omega _{0,j}$ be weights on $\rr d$ such that
\eqref{Eq:WeightCondUsual} holds true,
$\omega (x,\xi )=\omega _0(x)$ and $\omega _j(x,\xi )=\omega _{0,j}(x)$,
$j=1,2$, $x,\xi \in \rr d$. Let $a_0$ in \eqref{Eq:DefFuncSeq}
be such that $\Lambda =\Lambda _b$ and $a_0(j,\cdo )\in C^\infty (\rr d)$ for every
$j\in \Lambda _b$, and let $\fkb _{a_0,\alpha}$ and $\fkb _{a_0,h}$ be given by
\eqref{Eq:ConsFuncSeq1} and \eqref{Eq:ConsFuncSeq2}. Also
suppose that one of the following conditions hold true:
\begin{itemize}
\item[{\rm{(i)}}] $\fkb _{a_0,\alpha} \in \sfW ^1(\omega _0,\ell ^{p})$ for every $\alpha
\in \nn d$, and $\omega _0,\omega _{0,j}\in \mascP (\rr d)$, $j=1,2$;

\vrum

\item[{\rm{(ii)}}] $\fkb _{a_0,h} \in \sfW ^1(\omega _0,\ell ^{p})$ for some $h>0$,
and $\omega _0,\omega _{0,j}\in \mascP _{E,s}^0(\rr d)$, $j=1,2$;

\vrum

\item[{\rm{(iii)}}] $\fkb _{a_0,h} \in \sfW ^1(\omega _0,\ell ^{p})$ for every $h>0$,
and $\omega _0,\omega _{0,j}\in \mascP _{E,s}(\rr d)$, $j=1,2$.
\end{itemize}
Then the following is true:
\begin{enumerate}
\item $M_{b,a_0}$ is continuous from $W^{p_1,q}_{(\omega _1)}(\rr d)$ to
$W^{p_2,q}_{(\omega _2)}(\rr d)$;

\vrum

\item  $M_{b,a_0}$ is continuous from $M^{p_1,q_1}_{(\omega _1)}(\rr d)$
to $M^{p_2,q_2}_{(\omega _2)}(\rr d)$.
\end{enumerate}
\end{thm}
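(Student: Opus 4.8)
The strategy is to factor $M_{b,a_0}$ through the function $T_\psi a_0$ of Proposition~\ref{Prop:HomeSymbSlopeMult} and reduce to the plain step-multiplier bound of Theorem~\ref{Thm:Mainthm1} together with the multiplication result of Section~\ref{sec3}. As in the proof of Theorem~\ref{Thm:Mainthm1}, a dilation by $b$ reduces us to the case $b=1$, so $\Lambda_b=\zz d$. Pick $\phi,\psi\in\maclD^\sigma(\rr d)$ as in \eqref{Eq:phiProp}--\eqref{Eq:PartUnity1}, with $\sigma>1$ chosen large enough that the Gevrey regularity matches the hypothesis on $a_0$ (so that case (i), (ii), or (iii) of Proposition~\ref{Prop:HomeSymbSlopeMult} applies). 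Writing $m \equiv T_\psi a_0 = \sum_{j\in\zz d} a_0(j,\cdot)\psi(\cdot-j)$, observe that since $\psi=1$ on $\supp\phi(\cdot-j)$, for $x\in j+[-\tfrac34,\tfrac34]^d$ we have $\chi_{j+Q}(x)\,m(x) = \chi_{j+Q}(x)\,a_0(j,x)$, hence
\begin{equation*}
M_{b,a_0}f = \sum_{j\in\zz d} a_0(j,\cdot)\chi_{j+Q}\,f = \Big(\sum_{j\in\zz d}\chi_{j+Q}\Big)\cdot m\cdot f = m\cdot f
\end{equation*}
wherever the left side makes sense. Thus $M_{b,a_0}$ is just multiplication by $m$.

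Next I would apply Proposition~\ref{Prop:HomeSymbSlopeMult} in the appropriate case to conclude $m\in M^{\widetilde p,\,\widetilde q}_{(\vartheta_r)}(\rr d)\cap W^{\widetilde p,\,\widetilde q}_{(\vartheta_r)}(\rr d)$ for \emph{every} $\widetilde q\in(0,\infty]$ (the momentum exponent is free), with $\widetilde p = p$ and $\vartheta_r(x,\xi)=\omega_0(x)\eabs\xi^{r}$ (case (i)) or $\omega_0(x)e^{r|\xi|^{1/\sigma}}$ (cases (ii), (iii)), and with the norm controlled by the relevant $\sfW^1(\omega_0,\ell^p)$-quantity of $\fkb_{a_0,\alpha}$ or $\fkb_{a_0,h}$. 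Then the continuity statements follow by invoking the multiplication theorems of Section~\ref{sec3} with $N=2$, $f_1=m$, $f_2=f$. For part~(2): taking the $M$-space multiplication result (Theorem~\ref{Thm:MultMod1}) with $M^{p_1,q_1}_{(\omega_1)}$ the space of $f$ and $M^{p,\widetilde q}_{(\vartheta_r)}$ the space of $m$, the Lebesgue-exponent condition \eqref{Eq:LebExpHolderYoung1} becomes $\tfrac1{p_2}\le \tfrac1{p_1}+\tfrac1p$ and $\tfrac1{q_2}\le \tfrac1{q_1}+\tfrac1{\widetilde q}-R_{p_2,2}(q_1,\widetilde q)$; since $\widetilde q$ is at our disposal, the first inequality is exactly the first part of \eqref{Eq:MainThmCondLebExpAgain}, and for the second one chooses $\widetilde q$ so large that $R_{p_2,2}(q_1,\widetilde q)=\max(1,\tfrac1{q_1},\tfrac1{p_2})-\max(1,\tfrac1{q_1})$, giving the requirement $\tfrac1{q_1}-\tfrac1{q_2}\ge\max(\tfrac1{p_2}-1,0)$, which is the second part of \eqref{Eq:MainThmCondLebExpAgain}. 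The weight condition \eqref{Eq:WeightModMult} reduces to $\omega_{0,2}(x)\lesssim\omega_{0,1}(x)\omega_0(x)$, i.e.\ \eqref{Eq:WeightCondUsual} (the $\eabs\xi^r$ or $e^{r|\xi|^{1/\sigma}}$ factor in $\vartheta_r$ is harmless since $\omega_1,\omega_2$ do not depend on $\xi$, so one absorbs it by enlarging $\widetilde q$ and using that $M^{p,\widetilde q}_{(\vartheta_r)}\hookrightarrow M^{p,\infty}_{(\omega)}$ with $\omega(x,\xi)=\omega_0(x)$). For part~(1) one argues identically with $q_1=q_2=q$ and the $W$-space multiplication result Theorem~\ref{Thm:MultMod2}, where \eqref{Eq:LebExpHolderYoung2} with $R_2$ in place of $R_{p_2,2}$ forces $\tfrac1q-\tfrac1q\ge R_2(q,\widetilde q)-\cdots$, which is satisfied by taking $\widetilde q$ large since $q>1$.

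Finally, I would address well-definedness and uniqueness of $M_{b,a_0}$ on these spaces. Since $m\in M^{p,\widetilde q}_{(\vartheta_r)}$ with $p,\widetilde q$ chosen finite whenever $p<\infty$, and the target exponents are finite, Remark~\ref{Rem:ConvModUniqCase} (applied with $N=2$, at most one factor having $p_j+q_j=\infty$) guarantees that the bilinear map $m\cdot f$ agrees with the pointwise product on $\Sigma_1$ and is the unique continuous extension; the identification $M_{b,a_0}f=m\cdot f$ on $\Sigma_1$ then transfers this to $M_{b,a_0}$. The case $p=\infty$ (or other infinite exponents) follows, as in the proof of Theorem~\ref{Thm:Mainthm1}, by interpolation/duality from the finite cases together with the embedding of quasi-Banach modulation spaces into Banach ones.

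\textbf{Main obstacle.} The delicate point is the bookkeeping of the three regularity regimes in hypotheses (i)--(iii): one must choose $\sigma>1$ (hence the Gabor atom's Gevrey order) compatibly with the hypothesis so that Proposition~\ref{Prop:HomeSymbSlopeMult} yields membership of $m$ in a modulation space whose frequency weight $\vartheta_r$ is \emph{dominated by no growth restriction coming from $f$} — this is why we need the momentum exponent $\widetilde q$ of $m$ to be completely free and why $\omega_1,\omega_2$ are required to be independent of $\xi$. Verifying that the exponent arithmetic in \eqref{Eq:LebExpHolderYoung1}/\eqref{Eq:LebExpHolderYoung2} collapses to exactly \eqref{Eq:MainThmCondLebExpAgain} after optimizing over $\widetilde q$ — in particular that the functional $R_{p_2,2}$ produces the $\max(\tfrac1{p_2}-1,0)$ term — is the computational heart of the argument, though it is routine once set up correctly.
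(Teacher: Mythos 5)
Your reduction of $M_{b,a_0}$ to multiplication by the single smooth function $m=T_\psi a_0$ is false, and the rest of the argument collapses with it. For any smooth $\psi$ equal to $1$ on $Q_b$ (in particular the $\psi$ of \eqref{Eq:psiProp}) the translates $\psi(\cdo -k)$, $k\in \Lambda _b$, necessarily overlap, so for $x\in j+Q_b$ one has $m(x)=\sum _k a_0(k,x)\psi (x-k)$ with nonzero contributions from neighbouring $k\neq j$; hence $\chi _{j+Q_b}\, m\neq \chi _{j+Q_b}\, a_0(j,\cdo )$ and $M_{b,a_0}f\neq m\cdot f$. (Already when each $a_0(j,\cdo )$ is a constant $a_0(j)$, $M_{b,a_0}$ is a genuine step multiplier, i.{\,}e. multiplication by a discontinuous function, which cannot coincide with multiplication by any smooth $m$.) The paper's proof keeps the sharp cutoffs explicitly: it chooses sublattices $\Lambda _1,\dots ,\Lambda _N$ of $\Lambda _b$ so that within each $\Lambda _j$ the supports of $\psi (\cdo -k)$ are pairwise disjoint, and writes $M_{b,a_0}=\sum _{j}S_{2,j}\circ S_{1,j}$, where $S_{1,j}$ is multiplication by the smooth function $\sum _{k\in \Lambda _j}a_0(k,\cdo )\psi (\cdo -k)$ (handled via Proposition \ref{Prop:HomeSymbSlopeMult} and Theorems \ref{Thm:MultMod1}--\ref{Thm:MultMod2}) and $S_{2,j}$ is multiplication by the $0$--$1$ step function $\sum _{k\in \Lambda _j}\chi _{Q_b}(\cdo -k)$, which is treated by Theorem \ref{Thm:Mainthm1}. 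In your proposal Theorem \ref{Thm:Mainthm1} never actually enters, yet it is precisely the step part that forces the hypotheses $q\in (1,\infty )$ and the loss $\frac 1{q_1}-\frac 1{q_2}\ge \max (\frac 1{p_2}-1,0)$; a proof that bypasses it would "prove" more than is true.

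The exponent arithmetic is also off. For $N=2$ one has $R_{p_2,2}(q_1,\widetilde q)=\max (1/r_1,1/r_2)$ with $r_1=\min (1,q_1,p_2)$ and $r_2=\min (1,\widetilde q,p_2)$, not the difference of maxima you wrote. Taking $\widetilde q$ large gives the requirement $\frac 1{q_2}\le \frac 1{q_1}+\frac 1{\widetilde q}-\max (1,\frac 1{q_1},\frac 1{p_2})$, which is far too restrictive (indeed vacuous for $\widetilde q=\infty$, $q_1>1$), while taking $\widetilde q\le \min (1,p_2)$ --- the choice actually needed, and available since Proposition \ref{Prop:HomeSymbSlopeMult} gives membership for arbitrarily small momentum exponents --- only yields $q_2\ge q_1$ and never produces the term $\max (\frac 1{p_2}-1,0)$; that term arises from applying Theorem \ref{Thm:Mainthm1} with exponent $p_2$ to the step factors $S_{2,j}$. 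Similarly, in part (1) the auxiliary momentum exponent of $m$ must be taken small (as in the paper, $q\in (0,1]$), not large, for \eqref{Eq:LebExpHolderYoung2} to hold with equal source and target exponents. So the correct architecture is: smooth multiplication first (with small auxiliary exponent, weights independent of $\xi$, and \eqref{Eq:WeightCondUsual}), then the $0$--$1$ step multiplier of Theorem \ref{Thm:Mainthm1} to pass from $q_1$ to $q_2$; your proposal is missing this second, essential, ingredient.
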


\par

\begin{proof}
We only prove the result when (iii) holds. The other cases follow by
similar arguments and is left for the reader.

\par

Let $\psi \in \Sigma _s^\sigma (\rr d)$ be such that $\psi =1$ on $Q_b$ and
supported in a neighbourhood of $Q_b$, and let $\Lambda _1,\dots ,\Lambda _N$
be sublattices of $\Lambda =\Lambda _b$ such that
$$
\bigcup _{j=1}^N\Lambda _j =\Lambda
\quad \text{and}\quad
\supp \psi (\cdo -{k_1})\bigcap \supp \psi (\cdo -{k_2}) = \emptyset
, k_1,k_2\in \Lambda _j,\ k_1\neq k_2,
$$
for every $j=1,\dots ,N$. Then
$$
M_{b,a_0} = \sum _{j=1}^N S_j,
$$
where $S_j=S_{2,j}\circ S_{1,j}$, with $S_{1,j}$ and $S_{2,j}$
being the multiplication operators with the functions
$$
\fy _{1,j} \equiv \sum _{k\in \Lambda _j}a_0(k,\cdo )\psi (\cdo -k)
\quad \text{and}\quad
\fy _{2,j} \equiv \sum _{k\in \Lambda _j}\chi _{Q_b}(\cdo -k),
$$
respectively. The result follows if we prove the asserted continuity
properties for $S_j$ in place of $M_{b,a_0}$ 

\par

By Proposition \ref{Prop:HomeSymbSlopeMult} it follows that
$\fy _{1,j} \in M^{p,q}_{(\vartheta _r)}(\rr d)\cap W^{p,q}_{(\vartheta _r)}(\rr d)$
for every $q\in (0,1]$ and $r>0$.
Hence, if we choose $q$ small enough, Theorems \ref{Thm:MultMod1} and
\ref{Thm:MultMod2} show that $S_{1,j}$ is continuous
from $W^{p_1,q}_{(\omega _1)}(\rr d)$ to $W^{p_2,q}_{(\omega _2)}(\rr d)$,
and from
$M^{p_1,q_1}_{(\omega _1)}(\rr d)$ to $M^{p_2,q_1}_{(\omega _2)}(\rr d)$.
In view of Theorem \ref{Thm:Mainthm1} one has that $S_{2,j}$ is continuous
on $W^{p_2,q}_{(\omega _2)}(\rr d)$, and from
$M^{p_2,q_1}_{(\omega _2)}(\rr d)$ to $M^{p_2,q_2}_{(\omega _2)}(\rr d)$,
for every $j$. By combining these mapping properties it follows that
$S_j$ is continuous from $W^{p_1,q}_{(\omega _1)}(\rr d)$ to
$W^{p_2,q}_{(\omega _2)}(\rr d)$, and from
$M^{p_1,q_1}_{(\omega _1)}(\rr d)$ to $M^{p_2,q_2}_{(\omega _2)}(\rr d)$
for every $j$, and the result follows.
\end{proof}

\par

By Fourier transforming the latter result we obtain the following
extension of Theorem \ref{Thm:Mainthm2}. The details are left for
the reader. Here
\begin{equation}\label{Eq:MainThmCondLebExpAgain2}
\frac 1{q_2}-\frac 1{q_1}\le \frac 1q
\quad \text{and} \quad
\frac 1{p_1}-\frac 1{p_2} \ge \max \left ( \frac 1{q_2}-1,0 \right ).
\end{equation}

\par

\begin{thm}\label{Thm:Mainthm2Ext}
Let $q,q_1,q_2\in (0,\infty ]$, $p\in (1,\infty )$, $p_1,p_2\in (\min (1,q_2),\infty )$
be such that \eqref{Eq:MainThmCondLebExpAgain2} holds,
$b>0$, $\omega _0,\omega _{0,j}$ be weights on $\rr d$ such that
\eqref{Eq:WeightCondUsual} holds true,
$\omega (x,\xi )=\omega _0(\xi )$ and $\omega _j(x,\xi )=\omega _{0,j}(\xi )$,
$j=1,2$, $x,\xi \in \rr d$. Let $f_0$ in \eqref{Eq:DefFuncSeq}
be such that $\Lambda =\Lambda _b$ and $a_0(j,\cdo )\in C^\infty (\rr d)$ for every
$j\in \Lambda _b$, and let $\fkb _{a_0,\alpha}$ and $\fkb _{a_0,h}$ be given by
\eqref{Eq:ConsFuncSeq1} and \eqref{Eq:ConsFuncSeq2}. Also
suppose that one of the following conditions hold true:
\begin{itemize}
\item[{\rm{(i)}}] $\fkb _{a_0,\alpha} \in \sfW ^1(\omega _0,\ell ^{p})$ for every $\alpha
\in \nn d$, and $\omega _0,\omega _{0,j}\in \mascP (\rr d)$, $j=1,2$;

\vrum

\item[{\rm{(ii)}}] $\fkb _{a_0,h} \in \sfW ^1(\omega _0,\ell ^{p})$ for some $h>0$,
and $\omega _0,\omega _{0,j}\in \mascP _{E,s}^0(\rr d)$, $j=1,2$;

\vrum

\item[{\rm{(iii)}}] $\fkb _{a_0,h} \in \sfW ^1(\omega _0,\ell ^{p})$ for every $h>0$,
and $\omega _0,\omega _{0,j}\in \mascP _{E,s}(\rr d)$, $j=1,2$.
\end{itemize}
Then the following is true:
\begin{enumerate}
\item $M_{\mascF,b,a_0}$ is continuous from $M^{p,q_1}_{(\omega _1)}(\rr d)$ to
$M^{p,q_2}_{(\omega _2)}(\rr d)$;

\vrum

\item  $M_{\mascF ,b,a_0}$ is continuous from $W^{p_1,q_1}_{(\omega _1)}(\rr d)$
to $W^{p_2,q_2}_{(\omega _2)}(\rr d)$.
\end{enumerate}
\end{thm}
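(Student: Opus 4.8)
The plan is to derive Theorem~\ref{Thm:Mainthm2Ext} from Theorem~\ref{Thm:Mainthm1Ext} by conjugation with the Fourier transform, exactly as Theorem~\ref{Thm:Mainthm2} was derived from Theorem~\ref{Thm:Mainthm1}. First I would recall from Remark~\ref{Rem:ModSpaces} that if $\omega _0(x,\xi )=\omega (-\xi ,x)$, then $\mascF$ restricts to a homeomorphism from $M^{p,q}_{(\omega )}(\rr d)$ to $W^{q,p}_{(\omega _0)}(\rr d)$; in particular, with a weight that depends only on the frequency variable on the modulation-space side, one obtains a weight depending only on the space variable on the Wiener-amalgam side, and vice versa. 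Thus, writing $\widetilde \omega (x,\xi )=\omega _0(x)$ and $\widetilde \omega _j(x,\xi )=\omega _{0,j}(x)$ for the weights appearing in Theorem~\ref{Thm:Mainthm1Ext}, the Fourier transform intertwines $M^{p,q_1}_{(\omega _1)}(\rr d)$ with $W^{q_1,p}_{(\widetilde \omega _1)}(\rr d)$ and $M^{p,q_2}_{(\omega _2)}(\rr d)$ with $W^{q_2,p}_{(\widetilde \omega _2)}(\rr d)$, and likewise sends $W^{p_1,q_1}_{(\omega _1)}(\rr d)$ to $M^{q_1,p_1}_{(\widetilde \omega _1)}(\rr d)$ and $W^{p_2,q_2}_{(\omega _2)}(\rr d)$ to $M^{q_2,p_2}_{(\widetilde \omega _2)}(\rr d)$.

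Next I would observe that by definition $M_{\mascF ,b,a_0}=\mascF ^{-1}\circ M_{b,a_0}\circ \mascF$, so the continuity of $M_{\mascF ,b,a_0}$ between two modulation (resp.\ Wiener-amalgam) spaces is equivalent, via the homeomorphisms above, to the continuity of $M_{b,a_0}$ between the corresponding Wiener-amalgam (resp.\ modulation) spaces with the frequency and space roles of the Lebesgue exponents swapped. Concretely, part~(1) of Theorem~\ref{Thm:Mainthm2Ext}, the continuity $M_{\mascF ,b,a_0}\colon M^{p,q_1}_{(\omega _1)}\to M^{p,q_2}_{(\omega _2)}$, becomes the continuity $M_{b,a_0}\colon W^{q_1,p}_{(\widetilde \omega _1)}\to W^{q_2,p}_{(\widetilde \omega _2)}$, which is precisely part~(1) of Theorem~\ref{Thm:Mainthm1Ext} after the relabeling $p\mapsto q$, $p_1\mapsto q_1$, $p_2\mapsto q_2$, $q\mapsto p$. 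Similarly, part~(2), the continuity $M_{\mascF ,b,a_0}\colon W^{p_1,q_1}_{(\omega _1)}\to W^{p_2,q_2}_{(\omega _2)}$, becomes $M_{b,a_0}\colon M^{q_1,p_1}_{(\widetilde \omega _1)}\to M^{q_2,p_2}_{(\widetilde \omega _2)}$, which is part~(2) of Theorem~\ref{Thm:Mainthm1Ext} under the same relabeling. One checks directly that the hypotheses match: the Lebesgue-exponent conditions \eqref{Eq:MainThmCondLebExpAgain2} turn into \eqref{Eq:MainThmCondLebExpAgain} under the swap $p\leftrightarrow q$, $p_j\leftrightarrow q_j$; the weight relation \eqref{Eq:WeightCondUsual} is symmetric in this respect; and the three alternative conditions (i)--(iii) on $\fkb _{a_0,\alpha}$, $\fkb _{a_0,h}$ and on the membership of the weights in $\mascP$, $\mascP _{E,s}^0$, $\mascP _{E,s}$ are stated identically in both theorems and are untouched by Fourier conjugation.

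The one point requiring a little care, and the place I expect the bulk of the (short) argument to lie, is the verification that the various assumptions on $a_0$ and on the auxiliary weight $\omega _0$ are genuinely invariant under the swap. The functions $\fkb _{a_0,\alpha}$ and $\fkb _{a_0,h}$ of \eqref{Eq:ConsFuncSeq1}--\eqref{Eq:ConsFuncSeq2} and their Wiener-space membership $\fkb _{a_0,\bullet}\in \sfW ^1(\omega _0,\ell ^p)$ involve the space-side exponent $p$ in Theorem~\ref{Thm:Mainthm1Ext}, which becomes the frequency-side exponent (still called $p$) in Theorem~\ref{Thm:Mainthm2Ext}; but since in Theorem~\ref{Thm:Mainthm2Ext} the relevant Wiener-amalgam-space exponent produced on the $M_{b,a_0}$ side is exactly this $p$, the hypothesis transports correctly. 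One also records that the reduction to $b=1$ by dilation, already used in the proof of Theorem~\ref{Thm:Mainthm1}, commutes with the Fourier transform up to an inverse dilation, so no new phenomenon appears. Having matched hypotheses and conclusions, the theorem follows; I would therefore simply state that the proof is obtained from Theorem~\ref{Thm:Mainthm1Ext} by Fourier transformation, in the same manner as Theorem~\ref{Thm:Mainthm2} follows from Theorem~\ref{Thm:Mainthm1}, and leave the routine bookkeeping to the reader, exactly as the excerpt already announces.
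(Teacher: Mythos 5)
Your overall route is exactly the paper's: the paper proves Theorem \ref{Thm:Mainthm2Ext} in one line, by Fourier conjugation of Theorem \ref{Thm:Mainthm1Ext} using $M_{\mascF ,b,a_0}=\mascF ^{-1}\circ M_{b,a_0}\circ \mascF$ and the homeomorphism $\mascF \colon M^{p,q}_{(\omega )}\to W^{q,p}_{(\omega _0)}$, $\omega _0(x,\xi )=\omega (-\xi ,x)$, from Remark \ref{Rem:ModSpaces}, and your matching of the exponent ranges and of \eqref{Eq:MainThmCondLebExpAgain2} with \eqref{Eq:MainThmCondLebExpAgain} under the swap $p\leftrightarrow q$, $p_j\leftrightarrow q_j$, as well as of the weights (frequency-only weights becoming space-only weights), is correct.

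However, the one step you single out as needing care is precisely where your argument goes wrong. Under the forced relabeling $p\leftrightarrow q$, $p_j\leftrightarrow q_j$, the hypotheses (i)--(iii) of Theorem \ref{Thm:Mainthm1Ext}, i.{\,}e. $\fkb _{a_0,\alpha},\fkb _{a_0,h}\in \sfW ^1(\omega _0,\ell ^{p})$, transport to $\fkb _{a_0,\alpha},\fkb _{a_0,h}\in \sfW ^1(\omega _0,\ell ^{q})$, with $q$ the exponent occurring in $\tfrac 1{q_2}-\tfrac 1{q_1}\le \tfrac 1q$. Indeed, after conjugation one needs $M_{b,a_0}\colon W^{q_1,p}_{(\widetilde \omega _1)}\to W^{q_2,p}_{(\widetilde \omega _2)}$ (resp. $M^{q_1,p_1}\to M^{q_2,p_2}$), and in Theorem \ref{Thm:Mainthm1Ext} the exponent entering the $\sfW ^1(\omega _0,\ell ^{\, \cdot})$ hypothesis is the parameter that bounds the admissible drop in the \emph{first} (space-side) index, which here is $q$, not the preserved second index $p$. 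So your claim that ``the relevant Wiener-amalgam-space exponent produced on the $M_{b,a_0}$ side is exactly this $p$'' and hence that conditions (i)--(iii) are untouched by the conjugation is not correct: the Fourier argument proves the theorem with $\sfW ^1(\omega _0,\ell ^{q})$ in (i)--(iii), not with $\sfW ^1(\omega _0,\ell ^{p})$ as literally printed. This is also what the surrounding text expects (Corollary \ref{Cor:Mainthm2} assumes $a_0\in \ell ^q_{(\omega _0)}(\Lambda _b)$, and the introduction announces $a_0\in \ell ^q$), so the $\ell ^p$ in the printed statement should be read as a misprint; the honest move is to flag it and prove the $\ell ^q$ version, rather than to assert, incorrectly, that the $\ell ^p$ hypothesis transports as stated.
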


\par

We observe that Theorems \ref{Thm:Mainthm1Ext} and \ref{Thm:Mainthm2Ext}
include the following extensions of Theorems \ref{Thm:Mainthm1} and
\ref{Thm:Mainthm2}.

\par

\begin{cor}\label{Cor:Mainthm1}
Let $p,p_1,p_2\in (0,\infty ]$, $q\in (1,\infty )$, $q_1,q_2\in (\min (1,p),\infty )$
be such that \eqref{Eq:MainThmCondLebExpAgain} hold,
$b>0$, $\omega _0,\omega _{0,j}\in \mascP _{E}(\rr {d})$
be such that \eqref{Eq:WeightCondUsual},
$\omega _j(x,\xi )=\omega _{0,j}(x)$, $j=1,2$, $x,\xi \in \rr d$,
$x,\xi \in \rr d$, and let $a_0\in \ell ^p_{(\omega _0)} (\Lambda _b)$.
Then the following is true:
\begin{enumerate}
\item $M_{b,a_0}$ is continuous from $W^{p_1,q}_{(\omega _1)}(\rr d)$ to
$W^{p_2,q}_{(\omega _2)}(\rr d)$;

\vrum

\item  $M_{b,a_0}$ is continuous from $M^{p_1,q_1}_{(\omega _1)}(\rr d)$
to $M^{p_2,q_2}_{(\omega _2)}(\rr d)$.
\end{enumerate}
\end{cor}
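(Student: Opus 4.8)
The plan is to run the argument of the proof of Theorem~\ref{Thm:Mainthm1Ext}, the only genuinely new point being that the symbol here is constant in $x$, so the auxiliary functions $\fkb_{a_0,\alpha},\fkb_{a_0,h}$ of \eqref{Eq:ConsFuncSeq1}--\eqref{Eq:ConsFuncSeq2} reduce to the constant $\sup_j|a_0(j)|$ and Proposition~\ref{Prop:HomeSymbSlopeMult} only yields an $\ell^\infty$-type bound; I would replace that step by a sharper estimate that sees the full $\ell^p_{(\omega_0)}$-structure of $a_0$. First, by the dilation argument at the beginning of the proof of Theorem~\ref{Thm:Mainthm1} one reduces to $b=1$, so that $\Lambda_b=\zz d$ and $Q_b=Q:=[0,1]^d$ (dilations preserve $\mascP_E(\rr d)$ and the relation \eqref{Eq:WeightCondUsual}). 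Next, pick $\sigma>1$ and $\psi\in\Sigma_1^\sigma(\rr d)$ with $\psi=1$ on $Q$ and $\supp\psi$ in a neighbourhood of $Q$ small enough that the $2\zz d$-translates $\psi(\cdo-k)$ have pairwise disjoint supports; splitting $\zz d$ into the $N=2^d$ cosets $\Lambda_1,\dots,\Lambda_N$ of $2\zz d$ and using $\psi\equiv1$ on $Q$ gives, exactly as in Theorem~\ref{Thm:Mainthm1Ext},
\begin{equation*}
M_{1,a_0}=\sum_{i=1}^N S_{2,i}\circ S_{1,i},\qquad
\varphi_{1,i}:=\sum_{k\in\Lambda_i}a_0(k)\psi(\cdo-k),\quad
\varphi_{2,i}:=\sum_{k\in\Lambda_i}\chi_{k+Q},
\end{equation*}
where $S_{1,i},S_{2,i}$ denote multiplication by $\varphi_{1,i}$ respectively $\varphi_{2,i}$. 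Thus it suffices to handle one $S_{2,i}\circ S_{1,i}$.

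The key estimate is the following refinement. Fixing $\phi\in\Sigma_1^\sigma(\rr d)\setminus0$ one has
\begin{equation*}
(V_\phi\varphi_{1,i})(x,\xi)=\sum_{k\in\Lambda_i}a_0(k)\,e^{-i\scal k\xi}\,(V_\phi\psi)(x-k,\xi),
\end{equation*}
and by Lemma~\ref{Lemma:GSFourierest} $|(V_\phi\psi)(x,\xi)|\lesssim e^{-R(|x|+|\xi|^{1/\sigma})}$ for every $R>0$. Combining this with the moderateness $\omega_0(x)\lesssim\omega_0(k)e^{r_0|x-k|}$, taking $R$ large, then applying the $L^p$-norm in $x$ and the discrete Young inequality of Proposition~\ref{Prop:HolderYoungDiscrLebSpaces}(2) (valid for all $p\in(0,\infty]$), I would obtain for $\vartheta_r(x,\xi):=\omega_0(x)e^{r|\xi|^{1/\sigma}}$ that $\nm{(V_\phi\varphi_{1,i})(\cdo,\xi)\vartheta_r(\cdo,\xi)}{L^p(\rr d)}\lesssim\nm{a_0}{\ell^p_{(\omega_0)}(\Lambda_b)}\,e^{-R'|\xi|^{1/\sigma}}$ with $R'>0$, and hence
\begin{equation*}
\varphi_{1,i}\in M^{p,q_0}_{(\vartheta_r)}(\rr d)\cap W^{p,q_0}_{(\vartheta_r)}(\rr d),\qquad
\nm{\varphi_{1,i}}{M^{p,q_0}_{(\vartheta_r)}}+\nm{\varphi_{1,i}}{W^{p,q_0}_{(\vartheta_r)}}\lesssim\nm{a_0}{\ell^p_{(\omega_0)}(\Lambda_b)}
\end{equation*}
for every $q_0\in(0,\infty]$ and $r>0$. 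This is where the disjoint-support structure is used: it is the $\ell^p$-quantity $\nm{a_0}{\ell^p_{(\omega_0)}}$, not $\sup_k|a_0(k)|$, that controls the $x$-sum, and $\vartheta_r\in\mascP_{E,1}^\sigma(\rr{2d})$, so all the Gabor and multiplication machinery applies.

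Finally I would assemble the mapping properties. Choosing $q_0>0$ small enough that $R_{p_2,2}(q_0,q_1)=1/q_0$ in \eqref{Eq:QrFuncDef}, the first inequality in \eqref{Eq:MainThmCondLebExpAgain} permits the choice of symbol exponent $p$ in \eqref{Eq:LebExpHolderYoung1}/\eqref{Eq:LebExpHolderYoung2}, and, since $\vartheta_r\ge\omega_0\otimes1$, condition \eqref{Eq:WeightModMult} follows from \eqref{Eq:WeightCondUsual}; Theorems~\ref{Thm:MultMod1} and \ref{Thm:MultMod2} then give $S_{1,i}\colon M^{p_1,q_1}_{(\omega_1)}(\rr d)\to M^{p_2,q_1}_{(\omega_2)}(\rr d)$ and $S_{1,i}\colon W^{p_1,q}_{(\omega_1)}(\rr d)\to W^{p_2,q}_{(\omega_2)}(\rr d)$. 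Since $\varphi_{2,i}$ is an $\ell^\infty$ step-multiplier symbol and the second inequality in \eqref{Eq:MainThmCondLebExpAgain} is precisely \eqref{Eq:MainThmCondLebExp} with $p_2$ in place of $p$, Theorem~\ref{Thm:Mainthm1} yields $S_{2,i}\colon M^{p_2,q_1}_{(\omega_2)}(\rr d)\to M^{p_2,q_2}_{(\omega_2)}(\rr d)$ and $S_{2,i}$ continuous on $W^{p_2,q}_{(\omega_2)}(\rr d)$. Composing and summing over the finitely many $i$, and then undoing the dilation, proves (1) and (2). The main obstacle is the sharp estimate for $\varphi_{1,i}$ in the second paragraph — in particular keeping the Gevrey weight $\vartheta_r$ in the correct weight class and making the exponent/weight bookkeeping of Theorems~\ref{Thm:MultMod1}, \ref{Thm:MultMod2} and \ref{Thm:Mainthm1} consistent; once that is in place the rest is routine bookkeeping parallel to the proof of Theorem~\ref{Thm:Mainthm1Ext}.
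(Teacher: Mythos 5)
Your argument is correct in substance, but the premise that forces you to deviate is a misreading of how the corollary is meant to reduce to Theorem \ref{Thm:Mainthm1Ext}: the paper's proof is the one-line substitution $a_0(j,\cdo )=a_0(j)\psi (\cdo -j)$ with $\psi \in \Sigma _1^\sigma$ compactly supported and $\psi =1$ on $Q_b$, which leaves $M_{b,a_0}$ unchanged, and for this \emph{localized} symbol the majorant $\fkb _{a_0,h}$ in \eqref{Eq:ConsFuncSeq2} is not the constant $\sup _j|a_0(j)|$ but a locally finite superposition with $\nm {\fkb _{a_0,h}}{\sfW ^1(\omega _0,\ell ^{p})}\lesssim \nm {a_0}{\ell ^p_{(\omega _0)}}$, so Proposition \ref{Prop:HomeSymbSlopeMult} does capture the full $\ell ^p_{(\omega _0)}$-structure and Theorem \ref{Thm:Mainthm1Ext} applies directly. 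What you do instead is unwind the proof of that theorem: the same dilation to $b=1$, the same coset decomposition $M_{1,a_0}=\sum _i S_{2,i}\circ S_{1,i}$, the same appeals to Theorems \ref{Thm:MultMod1} and \ref{Thm:MultMod2} for $S_{1,i}$ and to Theorem \ref{Thm:Mainthm1} for $S_{2,i}$, the only new ingredient being your direct estimate $|V_\phi \varphi _{1,i}(x,\xi )|\lesssim \sum _k|a_0(k)|e^{-R(|x-k|+|\xi |^{1/\sigma})}$, which after a routine discretization (or the subadditivity of $\nm \cdo {L^p}^{p}$ when $p<1$) feeds into Proposition \ref{Prop:HolderYoungDiscrLebSpaces} (2) and re-proves exactly the special case of Proposition \ref{Prop:HomeSymbSlopeMult} that the paper's reduction invokes. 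So the two proofs are mathematically the same; the paper's route buys brevity by delegating everything to Theorem \ref{Thm:Mainthm1Ext}, while yours is self-contained and makes the key bound $\nm {\varphi _{1,i}}{M^{p,q_0}_{(\vartheta _r)}}+\nm {\varphi _{1,i}}{W^{p,q_0}_{(\vartheta _r)}}\lesssim \nm {a_0}{\ell ^p_{(\omega _0)}}$ explicit. One bookkeeping remark: when you apply Theorem \ref{Thm:Mainthm1} to $S_{2,i}$ you need $q_1,q_2>\min (1,p_2)$ (as in Theorem \ref{Thm:Mainthm1Ext}) rather than the corollary's stated $q_1,q_2>\min (1,p)$; this discrepancy is inherited from the paper's formulation, not a flaw introduced by your argument.
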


\par
%
%
%

\begin{cor}\label{Cor:Mainthm2}
Let $p\in (1,\infty )$, $p_1,p_2\in (\min (1,q),\infty )$, $q,q_1,q_2\in (0,\infty ]$
be such that \eqref{Eq:MainThmCondLebExpAgain2} hold,
$b>0$, $\omega _0,\omega _{0,j}\in \mascP _{E}(\rr {d})$
be such that \eqref{Eq:WeightCondUsual} holds,
$\omega _j(x,\xi )=\omega _{0,j}(\xi )$, $j=1,2$, $x,\xi \in \rr d$,
$x,\xi \in \rr d$, and let $a_0\in \ell ^q_{(\omega _0)} (\Lambda _b)$.
Then the following is true:
\begin{enumerate}
\item $M_{\mascF \! ,b,a_0}$ is continuous from $M^{p,q_1}_{(\omega _1)}(\rr d)$ to
$M^{p,q_2}_{(\omega _2)}(\rr d)$;

\vrum

\item  $M_{\mascF \! ,b,a_0}$ is continuous from $W^{p_1,q_1}_{(\omega _1)}(\rr d)$
to $W^{p_2,q_2}_{(\omega _2)}(\rr d)$.
\end{enumerate}
\end{cor}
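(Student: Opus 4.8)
The plan is to reduce Corollary \ref{Cor:Mainthm2} to Corollary \ref{Cor:Mainthm1} by a Fourier transform, exactly as Theorem \ref{Thm:Mainthm2} is deduced from Theorem \ref{Thm:Mainthm1} and Theorem \ref{Thm:Mainthm2Ext} from Theorem \ref{Thm:Mainthm1Ext}. Since $M_{\mascF,b,a_0}=\mascF^{-1}\circ M_{b,a_0}\circ\mascF$ and, by Remark \ref{Rem:ModSpaces}, $\mascF$ is a homeomorphism from $M^{p,q}_{(\omega)}(\rr d)$ onto $W^{q,p}_{(\omega_0)}(\rr d)$ with $\omega_0(x,\xi)=\omega(-\xi,x)$, conjugation by $\mascF$ turns the continuity of $M_{\mascF,b,a_0}$ from $M^{p,q_1}_{(\omega_1)}$ to $M^{p,q_2}_{(\omega_2)}$ (resp. from $W^{p_1,q_1}_{(\omega_1)}$ to $W^{p_2,q_2}_{(\omega_2)}$) into the continuity of $M_{b,a_0}$ from $W^{q_1,p}_{(\widetilde\omega_1)}$ to $W^{q_2,p}_{(\widetilde\omega_2)}$ (resp. from $M^{q_1,p_1}_{(\widetilde\omega_1)}$ to $M^{q_2,p_2}_{(\widetilde\omega_2)}$), where $\widetilde\omega_j(x,\xi)=\omega_{0,j}(x)$ now depends only on the first variable because $\omega_j(x,\xi)=\omega_{0,j}(\xi)$ depends only on the second. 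Under the relabelling $p\leftrightarrow q$, $p_j\leftrightarrow q_j$, condition \eqref{Eq:MainThmCondLebExpAgain2} becomes \eqref{Eq:MainThmCondLebExpAgain}, \eqref{Eq:WeightCondUsual} is unchanged, and $a_0\in\ell^q_{(\omega_0)}(\Lambda_b)$ becomes the hypothesis of Corollary \ref{Cor:Mainthm1}; so it remains to prove the latter.

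For Corollary \ref{Cor:Mainthm1} I would follow the argument of Theorem \ref{Thm:Mainthm1Ext}. After the dilation reduction to $b=1$ used in the proof of Theorem \ref{Thm:Mainthm1}, pick a cutoff $\psi\in\Sigma_s^\sigma(\rr d)$ (with $\sigma$ large enough for the weight class, as in Proposition \ref{Prop:HomeSymbSlopeMult}) with $\psi=1$ on $Q_b$ and $\supp\psi$ in a small neighbourhood of $Q_b$, and split $\Lambda_b=\bigcup_{i=1}^N\Lambda_i$ into finitely many sublattices so that for each $i$ the translates $\psi(\cdot-k)$, $k\in\Lambda_i$, have pairwise disjoint supports. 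Putting $\fy_{1,i}=\sum_{k\in\Lambda_i}a_0(k)\psi(\cdot-k)$ and $\fy_{2,i}=\sum_{k\in\Lambda_i}\chi_{Q_b}(\cdot-k)$, the identity $\psi\equiv1$ on $Q_b$ together with the disjointness gives $\sum_{i=1}^N\fy_{1,i}\fy_{2,i}=\sum_{j\in\Lambda_b}a_0(j)\chi_{j+Q_b}$, hence $M_{b,a_0}=\sum_{i=1}^N S_{2,i}\circ S_{1,i}$, where $S_{1,i}$ and $S_{2,i}$ are the operators of multiplication by $\fy_{1,i}$ and by the bounded step function $\fy_{2,i}$.

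The essential input is that $\fy_{1,i}\in M^{p,s}_{(\vartheta_r)}(\rr d)\cap W^{p,s}_{(\vartheta_r)}(\rr d)$ for every $s\in(0,\infty]$, where $\vartheta_r(x,\xi)=\omega_0(x)e^{r|\xi|^{1/\sigma}}$ for a suitable $r>0$, with $\nm{\fy_{1,i}}{M^{p,s}_{(\vartheta_r)}}+\nm{\fy_{1,i}}{W^{p,s}_{(\vartheta_r)}}\lesssim\nm{a_0}{\ell^p_{(\omega_0)}(\Lambda_b)}$. This is the scalar-sequence analogue of Proposition \ref{Prop:HomeSymbSlopeMult}; since the functions $\fkb_{a_0,h}$ of \eqref{Eq:ConsFuncSeq2} collapse to the constant $\nm{a_0}{\ell^\infty}$ for a constant symbol sequence, that proposition cannot be quoted directly, so I would reprove the estimate from the Gelfand--Shilov bound $|V_\phi\psi(x,\xi)|\lesssim e^{-r_0(|x|^{1/s}+|\xi|^{1/\sigma})}$ of Lemma \ref{Lemma:GSFourierest}, which gives $|V_\phi\fy_{1,i}(x,\xi)|\lesssim\bigl(\sum_{k\in\Lambda_i}|a_0(k)|e^{-r_0|x-k|^{1/s}}\bigr)e^{-r_0|\xi|^{1/\sigma}}$; moderateness of $\omega_0$ and the extended Young inequality of Proposition \ref{Prop:HolderYoungDiscrLebSpaces} then bound the $\ell^p_{(\omega_0)}$-norm in $x$ by $\nm{a_0}{\ell^p_{(\omega_0)}}$ and make the $\ell^s$-norm in $\xi$ finite once $e^{r|\xi|^{1/\sigma}}$ is absorbed into the decay ($r<r_0$); equivalently this is standard boundedness of the Gabor synthesis map (cf. the references for Proposition \ref{Prop:GaborExpMod}). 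Granting it, choose $s>0$ small: Theorems \ref{Thm:MultMod1} and \ref{Thm:MultMod2} make $S_{1,i}$ continuous from $W^{p_1,q}_{(\omega_1)}$ to $W^{p_2,q}_{(\omega_2)}$ and from $M^{p_1,q_1}_{(\omega_1)}$ to $M^{p_2,q_1}_{(\omega_2)}$ — the first Lebesgue index handled by the H{\"o}lder inequality $\tfrac{1}{p_2}\le\tfrac{1}{p_1}+\tfrac{1}{p}$ of \eqref{Eq:MainThmCondLebExpAgain}, the Young excess in the second index absorbed by smallness of $s$, and the weight change $\omega_1\to\omega_2$ by \eqref{Eq:WeightCondUsual} — while $S_{2,i}$, having bounded coefficients, is continuous on $W^{p_2,q}_{(\omega_2)}$ and from $M^{p_2,q_1}_{(\omega_2)}$ to $M^{p_2,q_2}_{(\omega_2)}$ by Theorem \ref{Thm:Mainthm1}, whose index requirement is the remaining inequality $\tfrac{1}{q_1}-\tfrac{1}{q_2}\ge\max(\tfrac{1}{p_2}-1,0)$ of \eqref{Eq:MainThmCondLebExpAgain}. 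Composing and summing over $i$ yields Corollary \ref{Cor:Mainthm1}.

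The step I expect to be the main obstacle is precisely the weighted modulation-space estimate for $\fy_{1,i}$ with constant $\lesssim\nm{a_0}{\ell^p_{(\omega_0)}}$: for $p<1$ one is outside the scope of classical Young's inequality, and for weights $\omega_0$ growing or decaying faster than polynomially one must know both that $e^{-r|\cdot|^{1/s}}$ lies in the relevant weighted discrete Lebesgue spaces and that $\psi$ can be taken with enough Gevrey regularity that $V_\phi\psi$ decays in $\xi$ faster than $\omega_0$ grows — this is exactly where the replacement of B-spline atoms by Gevrey atoms, emphasized in the introduction, is needed. Once this is in place, the Lebesgue-exponent and weight bookkeeping in $S_{2,i}\circ S_{1,i}$ and the final Fourier-transform step back to $M_{\mascF,b,a_0}$ are routine.
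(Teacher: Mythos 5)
Your argument is correct, but it is a more laborious route than the paper's. The paper proves Corollaries \ref{Cor:Mainthm1} and \ref{Cor:Mainthm2} in two lines: choose a compactly supported Gevrey cutoff $\psi$ with $\psi =1$ on $Q_b$ and substitute $a_0(j,\cdo )=a_0(j)\psi (\cdo -j)$ into Theorems \ref{Thm:Mainthm1Ext} and \ref{Thm:Mainthm2Ext}. Since $\psi =1$ on $Q_b$ this substitution does not change the multiplier, while $\fkb _{a_0,h}(x)\lesssim \sum _j |a_0(j)|\chi _{j+\supp \psi}(x)$, so $\fkb _{a_0,h}\in \sfW ^1(\omega _0,\ell ^{q})$ with norm $\lesssim \nm {a_0}{\ell ^q_{(\omega _0)}}$ and hypothesis (iii) of those theorems is satisfied; Corollary \ref{Cor:Mainthm2} then follows directly from Theorem \ref{Thm:Mainthm2Ext}, with no detour through Corollary \ref{Cor:Mainthm1}. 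Your remark that Proposition \ref{Prop:HomeSymbSlopeMult} ``cannot be quoted directly'' is accurate only for the naive constant choice $a_0(j,\cdo )=a_0(j)$; the smoothed substitution is precisely what makes the black-box application possible. What you do instead --- Fourier conjugation to reduce both assertions to the step-multiplier setting of Corollary \ref{Cor:Mainthm1}, and then a re-run of the proof of Theorem \ref{Thm:Mainthm1Ext} (the decomposition $M_{b,a_0}=\sum _i S_{2,i}\circ S_{1,i}$ over sublattices, Theorems \ref{Thm:MultMod1} and \ref{Thm:MultMod2} with a small second exponent for $S_{1,i}$, Theorem \ref{Thm:Mainthm1} for the bounded-step factor $S_{2,i}$) together with a direct STFT and discrete-Young proof that $\nm {\fy _{1,i}}{M^{p,s}_{(\vartheta _r)}}+\nm {\fy _{1,i}}{W^{p,s}_{(\vartheta _r)}}\lesssim \nm {a_0}{\ell ^p_{(\omega _0)}}$ --- is sound, and in effect re-derives the relevant special case of Proposition \ref{Prop:HomeSymbSlopeMult} by hand; the exponent and weight bookkeeping you give matches \eqref{Eq:MainThmCondLebExpAgain}, \eqref{Eq:MainThmCondLebExpAgain2} and \eqref{Eq:WeightCondUsual}. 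The trade-off is that the paper's substitution trick is economical and reuses the established machinery verbatim, whereas your unfolding is self-contained and makes explicit exactly where the Gevrey regularity of the atom and the quasi-Banach Young inequality of Proposition \ref{Prop:HolderYoungDiscrLebSpaces} are needed when the weights are exponentially moderate and $p<1$.
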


\par

\begin{proof}[Proof of Corollaries \ref{Cor:Mainthm1} and \ref{Cor:Mainthm2}]
Let $\psi \in \Sigma _1^\sigma (\rr d)$ be compactly supported and
chosen such that $\psi =1$ on $Q_b$. Then the results follow by letting
$a_0(j,\cdo )= a_0(j)\psi (\cdo -j)$ in Theorems
\ref{Thm:Mainthm1Ext} and \ref{Thm:Mainthm2Ext}. The details are left
for the reader.
\end{proof}

\par

\appendix

\par

\section{}\label{AppA}

\par

In this appendix we give a proof of Proposition
\ref{Prop:HolderYoungDiscrLebSpaces} (2).

\par

\begin{proof}[Proof of Proposition \ref{Prop:HolderYoungDiscrLebSpaces} (2)]
Since $\ell ^p_{(\omega )}(\Lambda )$ increases with $p$ and
$\nm \cdo{\ell ^p_{(\omega )}}$ decreases with $p$, we may assume that
equality is attained in \eqref{Eq:LebExpYoung}.

\par

By \eqref{Eq:WeightYoung} we get
$$
|(a_1*\cdots *a_N)(k)\cdot \omega _0(k)|
\le
(|a_1\cdot \omega _1|*\cdots *|a_N\cdot \omega _N|)(k),
$$
provided the left-hand side makes sense. A combination of this inequality
with the fact that the map $a_j\mapsto a_j\cdot \omega _j$ is an isometric
bijection from $\ell ^{p_j}_{(\omega _j)}(\Lambda)$ to
$\ell ^{p_j}(\Lambda)$, reduces ourselves to the case when $\omega _j=1$
for every $j$.

\par

Let $I_N=\{ 1,\dots , N\}$ and $j_0\in I_N$ be chosen such that
$p_{j_0}\ge p_j$ for every $j\in I_N$. Then
\begin{equation}\label{Eq:YoungComp1}
\frac 1{p_0} = \frac 1{p_{j_0}} +\sum _{j\neq j_0} \left (
\frac 1{p_j}-\max \left ( 1,\frac 1{p_j}\right )
\right )
\le
\frac 1{p_{j_0}},
\end{equation}
giving that $p_j\le p_0$ for every $j\in I_N$.

\par

In particular, if $p_1,\dots ,p_N\ge 1$, then \eqref{Eq:LebExpYoung}
shows that $p_0\ge 1$, and the assertion agrees with the
usual Young's inequality. By \eqref{Eq:YoungComp1} it also follows
that if $p_{j_0}=\infty$, then $p_0=\infty$ and $p_j\le 1$ when
$j\in I_N \setminus \{ j_0 \}$, since otherwise the equality in
\eqref{Eq:YoungComp1} may not hold for some $p_0$ in $(0,\infty ]$.
In this case we have that if $a_j\in \ell ^{p_j}(\Lambda )$, $j\in I_N$, then
the facts $\ell ^{p_j}\subseteq \ell ^1$ and
$\nm \cdo {\ell ^1}\le \nm \cdo {\ell ^{p_j}}$ for $j\neq j_0$ give
\begin{multline*}
a_1*\cdots *a_N \in \ell ^{p_1}(\Lambda )*\cdots *\ell ^{p_{j_0}}(\Lambda )*
\cdots *\ell ^{p_N}(\Lambda )
\\[1ex]
\subseteq
\ell ^1(\Lambda )*\cdots *\ell ^\infty (\Lambda )* \cdots *\ell ^1(\Lambda )
= \ell ^\infty (\Lambda )
\end{multline*}
and
$$
\nm {a_1*\cdots *a_N}{\ell ^\infty} \le \nm {a_1}{\ell ^1}\cdots
\nm {a_{j_0}}{\ell ^\infty} \cdots \nm {a_N}{\ell ^1}
\le
\nm {a_1}{\ell ^{p_1}}\cdots
\nm {a_{j_0}}{\ell ^\infty} \cdots \nm {a_N}{\ell ^{p_N}},
$$
and the result follows in this case as well.

\par

It remains to prove the result when $p_j<1$ for at least one $j\in I_N$ and that
$p_{j_0}<\infty$, giving that $\ell _0$ is dense in $\ell ^{p_j}$ for every $j\in I_N$.
Hence the result follows if we prove \eqref{Eq:YoungEst} when
$a_j\in \ell _0$.

\par

Suppose $N=2$. Then $I=\{  j_0,j_1\}$ for some $j_1\in \{ 1,2 \}$ with $p_{j_1}<1$
and $j_1\le j_0$. Then
$$
\frac 1{p_0} = \frac 1{p_{j_0}} + \frac 1{p_{j_1}} - \frac 1{p_{j_1}} = \frac 1{p_{j_0}},
$$
i.{\,}e. $p_0=p_{j_0}$. This gives
$$
\nm {a_{j_0}*a_{j_1}}{\ell ^{p_0}} = \nm {a_{j_0}*a_{j_1}}{\ell ^{p_{j_0}}}
\le \nm {a_{j_0}}{\ell ^{p_{j_0}}}\nm {a_{j_1}}{\ell ^{p_{j_1}}},
$$
and the result follows for $N=2$.

\par

Next suppose that $N\ge 3$, and that the result holds for less numbers
of factors in the convolution. Since the convolution is commutative, we
may assume that $p_N<1$ is the smallest number in $I_N$. Then
$p_N\le p_0$, and \eqref{Eq:LebExpYoung} is the same as
$$
\frac 1{p_0} \le \sum _{j=1}^{N-1} \frac 1{p_j} - Q_{N-1}(p_1,\dots ,p_{N-1}).
$$
By the induction hypothesis we get
$$
\nm {a_1*\cdots *a_{N-1}}{\ell ^{p_0}}
\le
\nm {a_1}{\ell ^{p_1}}\cdots \nm {a_{N-1}}{\ell ^{p_{N-1}}}.
$$
Since $p_N<1$ and $p_N\le p_0$ we get
$$
\nm {a_1*\cdots *a_N}{\ell ^{p_0}}
\le
\nm {a_1*\cdots *a_{N-1}}{\ell ^{p_0}}\nm {a_N}{\ell ^{p_N}}
\le
\nm {a_1}{\ell ^{p_1}}\cdots \nm {a_N}{\ell ^{p_N}}. \qedhere
$$ 
\end{proof}

\par

\end{document}